\title{\vspace{-1cm}On diffusive 2D Fokker-Planck-Navier-Stokes systems}
\date{\today}
\newtheorem{theorem}{Theorem}
\newtheorem{lemma}{Lemma}
\newtheorem{remark}{Remark}
\newtheorem{corollary}{Corollary}
\newtheorem{definition}{Definition}
\newcommand{\norm}[1]{\left\lVert#1\right\rVert}
\author[1]{	Joonhyun La \thanks{joonhyun@math.princeton.edu}}
\affil[1] {Department of mathematics, Princeton University}
\begin{document}
\maketitle

\begin{abstract}
We study models kinetic models of polymeric fluids. We introduce a notion of solutions which is based on moments of polymeric distributions. We prove global existence and uniqueness of a large class of initial data for diffusive systems of kinetic equations coupled to fluid equations. As a corollary, we obtain a rigorous derivation of Oldroyd-B closure. We also prove decay of free energy for all the systems considered.
\end{abstract}

\section{Introduction}

Polymeric fluids are important in many branches of science and engineering. In addition, their behavior is very nontrivial; for example, some polymeric fluids develop turbulent flows at low Reynolds numbers, in stark contrast to Newtonian fluids (\cite{Groisman2000}). Thus, to understand the behavior of a wide range of everyday materials, modeling and analysis of polymeric fluids are important. Also, polymeric fluids recently have drawn attention of mathematicians, and they have investigated various models of polymeric flows. In this paper, we focus on simplified models of polymeric flows, which originates from the kinetic theory of dilute polymer solutions. The model (\cite{Bird1987}, \cite{bird1987dynamics}, \cite{Doi1986}, \cite{MR1383323}) regards a complex fluid as a dilute suspension of polymers in a solvent, which is an incompressible Newtonian fluid. The polymer is modeled as an elastic dumbbell, that is, two massless beads joined by a spring with the potential $U(m)$. The configuration of the polymer is represented by its end-to-end vector $m \in M = \mathbb{R}^2$. The fraction of polymers with configuration $m$ is denoted by $f(m) dm$. The complex fluid occupies the physical space $\Omega = \mathbb{R}^2$. In the following, we will provide a more detailed exposition of the model that we consider. The explanation consists of several steps, starting from the description of simple, idealized situation to more complicated, realistic situation. First, we discuss the equilibrium state. Next, we investigate out-of-equilibrium dynamics of homogeneous suspensions. Then we introduce spatial inhomogeneity and fluid effects to the dynamics of the polymer distribution. Finally, the effects of polymers on the flow are explained.
\paragraph{Description of equilibrium distribution.} The equilibrium distribution is realized as a minimizer of a modified free energy
\begin{equation}
\mathcal{E} [f] = \int_M f \log f + U(m) f dm
\end{equation}
where $f \log f$ represents an entropic effect and $U(m) f$ represents the potential energy due to restoring force of the spring. The resulting distribution $f = \frac{e^{-U}}{Z}$ is the equilibrium distribution, where $Z$ is a normalizing factor. In some models (\cite{Constantin2010}, \cite{MR2655905}) the interaction between polymers are also considered and then $U$ may depend on $f$ as well; then the equation
\begin{equation}
f = \frac{1}{Z} e^{-U[f] },
\end{equation}
which is called the Onsager equation, shows various interesting properties, for example, phase transition ((\cite{Constantin2010}, \cite{MR2655905}). However, our models describe dilute solutions and the interaction between polymers are not considered, and $U$ depends only on $m$. 
\paragraph{Out-of-equilibrium dynamics of homogeneous polymer distributions.} Assuming that the polymer solution is homogeneous in physical space, and that there is no fluid flows that disturb polymer distributions, the polymer solution tends to converge to equilibrium distribution and the process is governed by the kinetic equation
\begin{equation}
\partial_t f = \epsilon \nabla_m \cdot \left (f \nabla_m \left ( \frac{\delta \mathcal{E} }{\delta f } \right ) \right ) \label{relaxation}
\end{equation}
where $\epsilon$ is a positive constant quantifying inter-particle diffusivity. We can rewrite this equation as
\begin{equation}
\partial_t f = \epsilon \Delta_m f + \epsilon \nabla_m \cdot \left ( f \nabla_m U \right ).
\end{equation}
The system (\ref{relaxation}) has $\mathcal{E}$ as a Lyapunov functional:
\begin{equation}
\frac{d}{dt} \mathcal{E} = - \mathcal{D}
\end{equation}
where
\begin{equation}
\mathcal{D} = \epsilon \int_{M} f | \nabla_m \left ( \log f + U \right ) | ^2 dm.
\end{equation}
It is also known that the equation (\ref{relaxation}) can be understood as a limit of steepest descent of sum of Wasserstein distance and the free energy functional (\cite{MR1617171}). 
\paragraph{Consideration of spatial inhomogeneity and fluid effects.} Polymer distributions are in fact spatially inhomogeneous, and the flows of the solvent influence the distributions of polymers. Thus, $f = f(x, m, t)$ depends also on $x$, and kinetic evolution of $f$ depends on the fluid velocity $u(x, t)$. The effect of fluid is twofold: first, it transports the polymer particles, and second it stretches and rotates polymer particles due to inhomogeneity of fluid field. The Fokker-Planck equation then reads
\begin{equation}
\partial_t f + u \cdot \nabla_x f + (\nabla_x u)m \cdot \nabla_m f = \epsilon \left ( \Delta_m f + \nabla_m \cdot \left ( f \nabla_m U \right ) \right ) + \nu_2 \Delta_x  f \label{inhomoFP}
\end{equation}
where $\nu_2 > 0$ is the coefficient for center-of-mass diffusion. If $u$ is divergence-free, then $( \nabla_x u ) m$ is also divergence-free in $m$ variable. There are two types of variants of (\ref{inhomoFP}) that are widely used in the literature. The first type of variants (non-diffusive models) sets $\nu_2 = 0$ in the center-of-mass diffusion term $\nu_2 \Delta_x f$, and the second (corotational models) replaces the fluid effect term $ (\nabla_x u )m \cdot \nabla_m f$ by $ \Omega(u) m \cdot \nabla_m f$, where $\Omega (u) =  \frac{1}{2} \left ( (\nabla_x ) u - ((\nabla_x ) u )^T \right )$ is the vorticity tensor. 
\begin{remark}
Non-diffusive models are considered because the center-of-mass diffusion coefficients are known to be significantly smaller than other effects (\cite{Bhave1991}). However, the center-of-mass diffusion effects are physically justified (\cite{El-Kareh1989}) and play a central role in the stabilization of the flow in the long run. Diffusive models ($\nu_2 > 0$) are discussed by many authors (\cite{MR2552164}, \cite{MR2338493}, \cite{MR2819196}, \cite{MR2981267}, \cite{MR2902849}, \cite{Schieber2006}, \cite{MR3698145}), and center-of-mass diffusion effects are added to stabilize the numerical algorithm (\cite{Thomases2007}, \cite{Thomases2011}) in numerical simulations of polymeric flows. 
\end{remark}
\begin{remark}
In corotational models (\cite{MR2340887}, \cite{MR1763488}, \cite{MR2456183}), fluid flows do not stretch polymers, but they only rotates polymers. In most models $U(m)$ is a radial function in $m$, and fluid flows do not influence the total elastic energy of the polymers in the corotational setting. Corotational models enjoy better a priori estimates due to this decoupling, which make well-posedness problems easier.
\end{remark}
\paragraph{Effects of polymers to flows.} So far, we discussed how the microscopic system behaves, and how macroscopic (fluid) effects influence the microscopic system. At this point, we discuss how the macroscopic system is influenced by the microscopic (polymer) system. The polymers influence flows (the "micro-macro" interaction) by an added stress tensor $\sigma$. The stress tensor $\sigma$ is given by Kramer's expression(\cite{MR1383323}):
\begin{equation}
\sigma (x, t) = \int_M m \otimes \left ( \nabla_m U (m) \right ) f dm. \label{Kramer}
\end{equation}
The fluid velocity field $u(x, t)$ solves the incompressible Navier-Stokes equation
\begin{equation}
\begin{gathered}
\partial_t u + u \cdot \nabla_x u = - \nabla_x p + \nu_1 \Delta_x u + K \nabla_x \cdot \sigma, \\
\nabla_x \cdot u = 0 
\end{gathered} \label{NS}
\end{equation}
with $K$ a positive constant and $\nu_1>0$ the kinematic viscosity. The coupled system consisting of (\ref{inhomoFP}), (\ref{Kramer}), and (\ref{NS}) satisfies an energetic principle: the sum of the kinetic energy and free energy dissipates, that is,
\begin{equation}
\frac{d}{dt} \left ( \int_{\mathbb{R}^2} \frac{1}{2} |u(x, t) |^2 + K \mathcal{E}[f] (x, t)  dx \right ) + \int_{\mathbb{R}^2} \nu_1 |\nabla_x u (x, t) |^2 + K \mathcal{D} ' (x, t) dx \le 0, \label{wfreeEdiss}
\end{equation}
where 
\begin{equation}
\mathcal{D} ' (x, t) = \int_M f \left ( \nu_2 |\nabla_m (\log f + U) |^2 + \epsilon |\nabla_x ( \log f + U ) |^2 \right ) dm.
\end{equation}
In fact, it can be shown that this energetic principle can be used to determine polymeric stress from the micro-micro (interaction between polymers) and macro-micro interactions (drift and deformation of polymers due to external fluid field) (\cite{MR2188682}, \cite{MR3050292}, \cite{MR2503655}). We note that due to the effect of spring potential of added polymers, the fluid also exhibits elastic as well as viscous behavior. This type of complicated behavior of a material is called "viscoelasticity" in the literature (\cite{MR1051193}). In addition, there is an a priori estimate which is similar to (\ref{wfreeEdiss}) but stronger: we replace the term $\mathcal{E}[f]$ by the relative entropy of $f$ with respect to the equilibrium distribution $\int f dm \frac{e^{-U}}{Z}$. Then we get the estimate (\ref{entropyestms}). This estimate is known as entropy estimate or free energy estimate in the literature. One of our goal in this paper is to prove this estimate rigorously.
\paragraph{Choice of potential function $U$.} Up until this point, we provided an overview of the system without specifying the potnential $U$. In fact, the mathematical nature of the system may vary depending on the choice of potential $U$. In this paragraph we briefly review the choice of potential $U$. The two most frequent choices for the potentials are Hookean spring, where $U (m) = |m|^2$, and FENE (finite extensible nonlinear elastic) dumbbell model, where $U(m) = -k \log \left ( 1 - \frac{|m|^2}{|m_0 |^2 } \right )$ (\cite{MR3010381}, \cite{MR2503655}). The Hookean spring model has its formal macroscopic closure, which is called the Oldroyd-B model (\cite{MR0035192}); by multiplying $m \otimes m$ to (\ref{inhomoFP}) and integrating in $m$ variable, and using integration by parts we get the formal macroscopic closure for Fokker-Planck equation
\begin{equation}
\partial_t \sigma + u \cdot \nabla_x \sigma = (\nabla_x u ) \sigma + \sigma (\nabla_x u) ^T - 2 \epsilon \sigma + 2 \epsilon \mathbb{I} + \nu_2 \Delta_x \sigma.
\end{equation}
When $\nu_2 > 0$, the global well-posedness is known (\cite{MR2989441}) while the case $\nu_2 = 0$ is open. For some class of initial data, one can justify this formal closure from Fokker-Planck equation (\cite{MR3698145}). In this paper, we extend this justification result to a broader class of initial data. The Oldroyd-B model is widely used due to its simplicity: the system is fully macroscopic, and there is no need to solve Fokker-Planck equation and integrate $f$ over $m$ to compute stress field $\sigma$. Infinite extensibility of polymer both poses difficulties in mathematical investigation and fitting real world data (\cite{MR2503655}). On the other hand, the potentials in FENE models blow up at finite $m$, so finite extensibility of polymers is guaranteed. Choosing these potentials yields mathematical difficulties near the boundaries (\cite{MR3010381}). Also the system is genuinely a multiscale problem; in fact, an exact macroscopic closure is only obtained for the Hookean spring potential. In this article, we consider potentials that lie between these two potentials: we consider potentials $U(m) = |m|^{2q}$, where $q \ge 1$ is a real number. Similar types of potentials have been considered (\cite{MR2902849}, \cite{MR2073140}), while the potentials in them behave as Hookean spring near $m=0$. Our potentials share some of the difficulties of both Hookean and FENE systems: the polymers are infinitely extensible and the problem is multiscale. 
\begin{remark}
There are other models for polymeric fluids (\cite{MR2503655}), not necessarily originated from kinetic models, which have been studied extensively; for example, there are Gisekus models (\cite{Giesekus1982}), Phan-Thien Tanner models(\cite{Thien1977}) which are derived from lattice model, and FENE-P models( \cite{POL:POL110040411}, \cite{MR1918565}, \cite{MR2203938}), which are derived from approximate closure of FENE model. 
\end{remark}

\subsection{Previous works}
There is a vast literature on complex fluids, and it is impossible to give a complete account. 
\paragraph{Oldroyd-B and relevant macroscopic models.} Macroscopic models for viscoelasticity, such as Oldroyd-B, have been studied extensively. First we discuss the results concerning non-diffusive models. Guillop\'{e} and Saut proved local existence, uniqueness of strong solution, and global existence of strong solution for small initial data, in the case of bounded domain, in \cite{MR1077577} and in \cite{MR1055305}. Fern\'{a}ndez-Cara, Guill\'{e}n, and Ortega extended the results of Guillop\'{e} and Saut to $L^p$ setting in \cite{MR1422802}, \cite{MR1633055}, and \cite{MR1893419}. In addition, Hieber, Naito, and Shibata studied the system in the case of exterior domain in \cite{MR2860633}. Chemin and Masmoudi studied the system in critical Besov spaces, and proved local well-posedness of the system and provided a Beale-Kato-Majda type (\cite{MR763762}) criterion in \cite{MR1857990}. Other Beale-Kato-Majda type sufficient conditions were given by Kupferman, Mangoubi, and Titi in \cite{MR2398006}, and by Lei, Masmoudi, and Zhou in \cite{MR2558169}. In addition, Lions and Masmoudi showed global existence of weak solution for corotational models in \cite{MR1763488}. Hu and Lin proved in \cite{MR3434615} global existence of weak solution for non-corotational models, given that the initial deformation gradient is close to the identity and the initial velocity is small. In \cite{MR2165379}, Lin, Liu, and Zhang developed an approach based on deformation tensor and Lagrangian particle dynamics. Lei and Zhou studied the system via incompressible limit in \cite{MR2191777} and proved global existence for small data. Also, Lei, Liu, and Zhou studied global existence for small data and incompressible limit in \cite{MR2393434}. Moreover, in \cite{MR3473592}, Fang and Zi proved global well-posedness for initial data whose vertical velocity field can be large. Constantin and Sun proved global existence for small data with large gradients for Oldroyd-B, and considered regularization of Oldroyd-B model in \cite{MR2901300}. Thomases and Shelley provided numerical evidence for singularities for Oldroyd-B system in \cite{Thomases2007}. 
Next we discuss the results for diffusive Oldroyd-B models. Barrett and Boyaval proved global existence of weak solution in \cite{MR2843021}. In \cite{MR2989441}, Constantin and Kliegl proved global well-posedness of strong solution. 
Also we refer to Elgindi and Rousset (\cite{MR3403757}) and Elgindi and Liu (\cite{MR3349425}) for Oldroyd-B type systems where fluid viscosity is ignored. 
\paragraph{Multiscale models, especially FENE models.} Macro-micro models, especially FENE models and some simplifications of them have been studied by many authors. In this paragraph, we discuss results concerning non-diffusive multiscale models. Renardy proved local existence of solution for FENE models in Sobolev space with potential $U(m) = (1 - |m|^2 ) ^{1 - k}$ for some $k>1$, as well as infinitely extensible models, in \cite{MR1084958}. E, Li, and Zhang considered modified models with stochastic setting in \cite{MR2073140}. Jourdain, Leli\'{e}vre, and Le Bris proved local existence for the FENE model in \cite{MR2039220}, in the setting of coupled system of Navier-Stokes equation and stochastic Fokker-Planck equation. Jourdain, Le Bris, Leli\'{e}vre, Otto proved exponential convergence to equilibrium in \cite{MR2221204} using entropy inequality method. There are also various other local existence results, for example Zhang and Zhang (\cite{MR2221211}), Kreml and Pokorn\'{y} (\cite{MR2610567}), and Masmoudi (\cite{MR2456183}). In \cite{MR2456183} the author controlled the stress tensor by the $H^1$ norm in $m$ coming from diffusion in $m$, thanks to Hardy type inequalities, and noted that initial data do not need to be regular in $m$ variable. Lin, Liu, and Zhang discussed near-equilibrium situations in \cite{MR2306223}. In \cite{MR2454611}, Masmoudi, Zhang, and Zhang proved global well-posedness for corotational case. One remarkable result, global existence of weak solution for FENE model, is proved by Masmoudi in \cite{MR3010381}. The author used defect measure to overcome difficulties from compactness issue. 
\paragraph{Smoluchowski models.} Smoluchowski equations, which refer to the models whose configuration spaces $M$ are compact manifolds, are also discussed by various authors. In \cite{MR2276466}, Constantin, Fefferman, Titi, and Zarnescu studied nonlinear Fokker-Planck equation driven by a time averaged Navier-Stokes system in 2D. Constantin (\cite{MR2391531}), Constantin and Masmoudi (\cite{MR2367203}), Constantin and Seregin (\cite{MR2667634}, \cite{MR2600741}) showed global existence of smooth solutions for large data in 2D was established. In addition, Otto and Tzavaras discussed Doi model in \cite{MR2365451}. 
\paragraph{Diffusive models and other regularized models.} There are results concerning regularized dumbbell models, for example introducing mollifiers to some terms in the equation (\cite{MR2397999}). Especially, dumbbell models with center-of-mass diffusion are discussed by Barrett and S\"{u}li (\cite{MR2338493}, \cite{MR2819196}, \cite{MR2981267}, \cite{MR2902849}, \cite{MR3698145}) , and Barrett and Boyaval (\cite{MR2843021}). Also Schonbek discussed the regularized model, with corotational assumption in \cite{MR2507462}.
\paragraph{A remark on multiscale models.} Concerning the polymer distribution of the macro-micro models, we note that there are two important remarks that were made in previous works. First, in \cite{MR1084958} Renardy pointed out that the natural setting for the distribution is $L^1$ space. Thus, the author proposed a Frech\'{e}t space based on weighted $L^1$ norms and it is used in \cite{MR2221211} also. However, this space involves derivatives of distributions in $m$ variable. Second, in \cite{MR2456183} Masmoudi used a function space which does not require a regularity in $m$ variable. However, the space is $L^2$ based; it requires square integrability of the distribution in the weighted space, that is, $f \in H^1 (\Omega; L^2 (\frac{1}{f_\infty} dm ) dx$, where $\Omega$ is the spatial domain and $f_\infty = e^{-U}$ is the equilibrium distribution. Although $L^2$ based function spaces are widely used (\cite{MR2456183}, \cite{MR2306223}, \cite{MR2454611}, \cite{MR3698145}) for polymer distribution, we propose a function space based on $L^1$ space, following Renardy's point. As far as we know, function spaces used in most literature do not satisfy both criterion simultaneously. One notable exception is \cite{MR3010381}, but we cannot directly apply the method used in \cite{MR3010381} since the proof relies on the finite extensibility of polymers. 
\paragraph{Free energy estimate.} The free energy estimate, which states that the free energy of the system does not increase over time, is well known and widely used. Especially, in kinetic theory literature, it is widely used to prove the convergence to equilibrium (\cite{MR2065020}, \cite{MR1951784}, \cite{MR2409469}). We were not able to find a rigorous proof of this free energy estimate in the coupled setting, and we provide one in the paper. In addition, we report that when the domain is unbounded, there might be a pathological example if no constraint on decay is imposed.

\subsection{Problem description}
We are interested in the following system:
\begin{equation}
\begin{gathered}
\partial_t u + u \cdot \nabla_x u = - \nabla_x p + \nu_1 \Delta_x u + K \nabla_x \cdot \sigma, \\
\nabla_x \cdot u = 0, \\
\sigma = \int_{\mathbb{R}^2} m \otimes (\nabla_m U(m) ) f (x, t, m) dm, \\
\partial_t f + u \cdot \nabla_x f + (\nabla_x u) m \cdot \nabla_m f = \epsilon \left ( \Delta_m f + \nabla_m \cdot \left ( f \nabla_m U \right ) \right ) + \nu_2 \Delta_x f, \\
U(m) = |m|^{2q}, \\
u(0) = u_0, f(0) = \mu_0,
\end{gathered} \label{system}
\end{equation}
where $q \ge 1$ is a real number, and the vector of position, configuration, and time $(x, m, t)$ is in $\mathbb{R}^2 \times \mathbb{R}^2 \times (0, T)$. For the simplicity of notation, we assume that $q$ is an integer, but our method works for any real number $q \ge 1$. We may also normalize $\mu_0$ so that $\int_x \int_m \mu_0 (dm) dx = 1$. The variable $u$ represents the velocity of the solvent fluid, $p$ represents the pressure, $f$ represents the distribution of the polymer, $\sigma$ represents the stress field due to polymer, and $\nu_1, K, \epsilon, \nu_2$ are positive constants. We want to investigate the existence and uniqueness of smooth solution for this system. However, we note that the regularity required for the macroscopic equation (the first equation of (\ref{system})) is not same as the regularity required for the microscopic equation (the fourth equation of (\ref{system})); for flows of the fluid to be smooth, we need the smoothness for $u$, but the only thing that we require for $f$ is the smoothness of $\sigma[f]$. In particular, smoothness in $m$ variable does not seem to be important. In addition, since $f$ contributes to flows of the whole system only by the macroscopic quantity $\sigma[f]$, it would be interesting if we can transform this microscopic-macroscopic system into a fully macroscopic system, possibly a coupled system of infinitely many variables. In this regard, we define the moment solution in section \ref{Momsolprop}, which is a sense of solution for the microscopic equation that we use in this paper. In short, a moment solution is a weak solution such that all moments of $f$ are controlled. A moment of $f$ is a weighted (usually weights are monomials $m^{I}$) integral in $m$ variable, and thus, a macroscopic quantity, depending only on $x$ and $t$. Appropriate initial data for moment solutions are nonnegative measures on $\mathbb{R}_m ^2 \times \mathbb{R}_x ^2$ such that norms of moments of them are controlled. \newline
\begin{remark}
We remark that the idea of transforming an equation to the coupled system of infinitely many variables is not new. In the context of turbulence theory, Friedmann-Keller equation (\cite{Monin1971}) employs an infinite chain of equations for the infinite set of moments.
\end{remark}
Next, we state our main results.We first prove the existence and uniqueness of the moment solution, given smooth flow $u$:
\begin{theorem}[Theorem \ref{momentsolutionexists}]
Given a smooth fluid field $u$(satisfying (\ref{velinitcond})), and appropriate initial data $\mu_0$ (satisfying (\ref{initpositivity}), (\ref{initmoment}), and (\ref{initstress})), there exists unique moment solution for the fourth equation of (\ref{system}). Furthermore, various norms of moments of this moment solution are controlled solely by the initial data and flow field $u$ (estimates (\ref{XrtypeM}), (\ref{L2H1M}), (\ref{L2H1M2}), (\ref{L2H2qM}), and (\ref{LinfL1M})).
\end{theorem}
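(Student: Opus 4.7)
My plan is to construct the solution by regularization and passage to the limit. I would first regularize $\mu_0$ by convolution with smooth mollifiers in both $x$ and $m$ and by truncating at large $|m|$, obtaining a sequence $f_0^n$ of smooth, nonnegative, compactly supported densities whose initial moments and stresses converge to those of $\mu_0$ in the senses dictated by (\ref{initpositivity})--(\ref{initstress}). Because $u$ is smooth and fixed, the fourth equation of (\ref{system}) with initial datum $f_0^n$ is a linear parabolic equation in $(x, m)$ with smooth coefficients, so classical parabolic theory produces a unique smooth, nonnegative, rapidly decaying approximating solution $f^n$. The hard part is then to establish estimates on the moments
\[ M_I^n(x, t) = \int_{\mathbb{R}^2} m^I f^n(x, m, t) \, dm \]
uniformly in $n$, matching the target norms (\ref{XrtypeM})--(\ref{LinfL1M}); once this is in hand, weak-$\ast$ compactness of $f^n$ together with strong compactness of the moments should yield the desired moment solution in the limit.

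\textbf{Closing the moment hierarchy.} To build these estimates, I would multiply the Fokker-Planck equation by the radial weight $|m|^{2k}$ and integrate in $m$, deriving, for the radial moments $N_k^n(x, t) = \int |m|^{2k} f^n \, dm$,
\begin{equation}
\partial_t N_k^n + u \cdot \nabla_x N_k^n - \nu_2 \Delta_x N_k^n = S_k^n + 4k^2 \epsilon N_{k-1}^n - 4kq\epsilon N_{k+q-1}^n,
\end{equation}
where the stretching term $S_k^n$ is pointwise bounded by $2k \|\nabla_x u\|_{L^\infty} N_k^n$ and the final term is a genuine dissipation thanks to the sign of the potential drift. Integrating in $(x, t)$ and applying Gronwall (after dropping the negative dissipation on the right) would give, at each step,
\[ \sup_{t \le T} \|N_k^n(t)\|_{L^1_x} + 4kq\epsilon \int_0^T \|N_{k+q-1}^n\|_{L^1_x} \, dt \leq C_k\Big(\|N_k^n(0)\|_{L^1_x}, \, \|N_{k-1}^n\|_{L^1_t L^1_x}, \, \|\nabla_x u\|_{L^1_t L^\infty_x}\Big). \]
Starting from mass conservation $\|N_0^n\|_{L^1_x} = 1$, an induction on $k$ would then control every radial moment in $L^\infty_t L^1_x$, and Cauchy-Schwarz in the components of $I$ would bound every $M_I^n$.

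\textbf{Spatial regularity, limit, and uniqueness.} To reach the $L^2_t H^k_x$-type bounds (\ref{L2H1M})--(\ref{L2H2qM}), I would run parabolic energy estimates on the moment equations: testing against $N_k^n$ or its $x$-derivatives and using $-\nu_2 \Delta_x$ as the leading dissipative operator, one gains one spatial derivative at a time, with lower-order terms absorbed via the hierarchy above and the $N_{k+q-1}$ term providing additional weighted dissipation. These bounds would yield tightness of $(f^n)$ in $m$ via moment control and compactness of the moment sequences in $(x, t)$ via Aubin-Lions, allowing passage to the limit along a subsequence and identification of $f$ as a moment solution whose moments satisfy the corresponding linear PDEs. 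For uniqueness, I would consider the difference of two moment solutions with identical initial data, which satisfies the same hierarchy with zero source; the same Gronwall-plus-dissipation induction forces every moment of the difference to vanish, and since the moments determine the underlying measure this proves uniqueness.

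\textbf{Where the main difficulty lies.} The central obstacle is that the moment hierarchy is genuinely open-ended: for $q > 1$, the equation for $N_k$ couples to the strictly higher moment $N_{k+q-1}$, so no finite truncation is self-contained. The key idea is to exploit the structural sign of the $U$-drift dissipation to convert this non-closure into time-integrated control on the next moment at each induction step. A secondary subtlety lies in the passage to the limit: I must ensure that weak-$\ast$ convergence of $f^n$ commutes with integration against the unbounded weights $m^I$, which is secured by the uniform tightness in $m$ supplied by the moment bounds at every order.
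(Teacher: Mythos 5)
Your overall plan—regularize, derive uniform moment estimates, pass to the limit—is the same route the paper follows in Section \ref{Solsch}, and your moment equation for $N_k^n$ is correct (it matches (\ref{alpharadmoments})). However, there are two genuine gaps.

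\textbf{The $X^r$ estimate is not reached.} Your induction on $k$ produces, at each step, a constant $C_k$ depending on $k$, $\|N_{k-1}^n\|$, and $\|\nabla_x u\|$, and you never track the growth of $C_k$ in $k$. But (\ref{XrtypeM}) is not a collection of per-$k$ bounds: it is a statement that the generating series $\sum_p \|\bar M_{2p}(t)\|_{L^2}/(2p)!\, z^{2p}$ converges on a radius that shrinks at a controlled exponential rate $r\exp(-\int_0^t\|\nabla_x u\|_{L^\infty})$. To get this one needs $\|\bar M_{2p}^\alpha(t)\|_{L^2}\lesssim (2p)!/\rho(t)^{2p}$ with explicit $\rho(t)$, and your layer-by-layer Gronwall gives at best an implicit $(k!)^{O(1)}$ growth with unspecified base. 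The paper's device is to test against $\bar M_{2k}^\alpha$ in $L^2_x$ (not $L^1_x$), divide out $(2k)!\|\bar M_{2k}^\alpha\|_{L^2}$, multiply by $z^{2k}$, \emph{sum over all $k$}, and observe that the resulting inequality for $F_e^\alpha(t;z)=\sum_k \|\bar M_{2k}^\alpha\|_{L^2} z^{2k}/(2k)!$ is a first-order transport-type inequality in the auxiliary variable $z$: $\partial_t F_e^\alpha \le 2\epsilon z^2 F_e^\alpha + \|\nabla_x u\|_{L^\infty} z\,\partial_z F_e^\alpha$. Solving this along characteristics in $z$ is what produces the shrinking radius and thus (\ref{XrtypeM}). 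Without the $X^r$ bound you also lose Lemma \ref{pointwiseCar}, which is precisely what guarantees the multivariate Carleman condition and hence that the moments determine the limit measure $\mu(x,t;\cdot)$; your remark that ``the moments determine the underlying measure'' is exactly what needs this ingredient.

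\textbf{The uniqueness argument does not close.} You propose to run the same Gronwall-plus-dissipation induction on the difference of two moment solutions. But the difference $\mu_1-\mu_2$ is a \emph{signed} measure; the restoring-force term $-4kq\epsilon\,(N_{k+q-1}[\mu_1]-N_{k+q-1}[\mu_2])$ has no definite sign, and the Cauchy–Schwarz positivity device (Lemma \ref{restoringpositive}) applies only when testing $\bar M_{2k}[\mu]$ against a nonnegative $\mu$. So the hierarchy for the difference is genuinely open-ended and the induction breaks. The paper sidesteps this entirely: uniqueness of the moment solution is an immediate consequence of the Fokker–Planck–Kolmogorov uniqueness theory (Theorem \ref{FPKeu}, cf.\ Lemma \ref{uniqueness}), once one verifies the $L^1(\mu)$ and $L^2(\mu)$ integrability conditions on the drift, which follow from the very moment bounds you have. (The continuous dependence of $\sigma$ on $u$, which you might be conflating with this, is a separate argument in Section \ref{Flfielddep} that requires the additional entropy hypothesis (\ref{initentropy}) and a regularized $L^1$-sign test function precisely because the sign structure is lost for differences.)

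A minor remark: the paper truncates the \emph{coefficients} $(\nabla_x u)m$ and $\nabla_m U$ via $\psi_\alpha$ rather than truncating the initial data in $m$; this keeps the approximate equation uniformly parabolic with bounded coefficients and is what makes the repeated integrations by parts in $m$ legitimate uniformly in the approximation. If you instead truncate the data but not the coefficients, the unbounded drift $\nabla_m U$ will immediately re-spread the support and you still have to justify the boundary terms in the moment computations.
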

Presence of the term $\epsilon \nabla_m \cdot \left ( f \nabla_m U \right )$ introduces higher order terms to evolution equations of moments if $q > 1$. Another problem in the justification of this formal calculation is the potential loss of decay in $m$; in formal derivation of evolution equations of moments, we use integration by parts to deal with terms with $\nabla_m f$. We need to know the finiteness of higher moments to justify the integration by parts. In the paper, we see how to overcome this difficulty. Next, we prove that the stress field depends continuously on the flow field. For this result we require finite entropy condition for the initial data.
\begin{theorem}[Theorem \ref{momentsolutionexists}]
Given two smooth fluid fields $u$, $v$, and appropriate initial data $\mu_0$ satisfying finite entropy condition (\ref{initentropy}), if we let $\sigma_1$ and $\sigma_2$ to be stress fields of the moment solutions with velocity fields $u$ and $v$, respectively, then $\sigma_1 - \sigma_2$ is controlled by $u-v$ ((\ref{differenceestimate})).
\end{theorem}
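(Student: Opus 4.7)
The plan is to compare the two moment solutions $f_1, f_2$ (produced by the existence part of the theorem with velocity fields $u$ and $v$) via a relative entropy estimate, and then convert the resulting $L^1$-type control of $f_1-f_2$ into an estimate on $\sigma_1-\sigma_2$ using the uniform moment bounds already established. The assumed finite entropy condition on $\mu_0$ is precisely what makes this comparison meaningful: it guarantees that the relative entropy is finite (indeed zero) at $t=0$ and that the Fisher-information dissipation terms produced along the flow are well defined.

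First I would let $g = f_1 - f_2$ and write its evolution equation, noting that the right-hand side is a forcing term proportional to $u-v$ (and $\nabla_x(u-v)$) multiplied by derivatives of $f_2$. Rather than work with $g$ in a weighted $L^2$, which would demand a regularity in $m$ that moment solutions need not possess, I would track
\[
\mathcal{H}(t) = \iint \bigl( f_1 \log(f_1/f_2) - (f_1 - f_2) \bigr)\, dm\, dx.
\]
Differentiating in time, after a standard regularization, and substituting the two Fokker--Planck equations, the symmetric diffusion contributions yield a nonnegative Fisher-information dissipation $\epsilon \iint f_1 |\nabla_m \log(f_1/f_2)|^2 + \nu_2 \iint f_1 |\nabla_x \log(f_1/f_2)|^2$, while the mismatched transport terms produce cross terms proportional to $(u-v)\cdot \nabla_x \log(f_1/f_2)$ and $(\nabla_x u - \nabla_x v) m \cdot \nabla_m \log(f_1/f_2)$, weighted by $f_1$. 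Young's inequality absorbs these into the dissipation, leaving a residual controlled by $\|u-v\|_{L^\infty_x}^2 \iint f_1\, dm\, dx + \|\nabla_x(u-v)\|_{L^\infty_x}^2 \iint |m|^2 f_1\, dm\, dx$; both factors are finite by the moment estimates from the first theorem. Gronwall starting from $\mathcal{H}(0) = 0$ then yields $\mathcal{H}(t) \le C(t) \int_0^t \|u-v\|_{H^s_x}^2\, d\tau$ for a suitable $s$.

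Second, the Csiszár--Kullback--Pinsker inequality gives $\|f_1 - f_2\|_{L^1_{m,x}} \le \sqrt{2\mathcal{H}(t)}$, and I would upgrade this to control of the weighted difference $\iint |m|^{2q} |g|\, dm\, dx$, which dominates $\|\sigma_1-\sigma_2\|_{L^1_x}$, by interpolating against a uniform-in-time bound on $\iint |m|^{2q+\alpha}(f_1 + f_2)\, dm\, dx$ for some $\alpha > 0$: splitting the $m$-integral at a large radius $R$ and optimizing in $R$ trades an $L^1_{m,x}$ bound on $g$ for a slightly worse rate on the weighted bound. Higher spatial norms of $\sigma_1 - \sigma_2$ follow similarly by further interpolation with the uniform $L^p_x$ moment bounds.

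The main obstacle is justifying the relative entropy computation itself. Since $\log(f_1/f_2)$ is a priori meaningless where either density vanishes, and since moment solutions are not classical in $m$, one must regularize (by convolution in $m$ together with a small shift $f_i + \delta$) to make the chain rule rigorous, and then pass to the limit using the moment bounds and the lower semicontinuity of relative entropy. A secondary technical difficulty is handling the stretching cross term, which carries the unbounded weight $m$; this is precisely where the high moment estimates \eqref{XrtypeM} and \eqref{L2H2qM} from the existence theorem enter to close the Gronwall loop.
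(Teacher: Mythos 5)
Your proposal correctly identifies that the finite entropy condition is tied to Fisher-information-type quantities and that the forcing in the difference equation is proportional to $u-v$; but the chosen functional, the total relative entropy $\mathcal H(t) = \iint (f_1\log(f_1/f_2) - (f_1-f_2))\,dm\,dx$, is measured in the wrong topology for this theorem and cannot be repaired by interpolation.

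The statement (\ref{differenceestimate}) asserts
\[
\norm{\sigma_1-\sigma_2}_{L^\infty(0,T;W^{1,2})\cap L^2(0,T;W^{2,2})} \le C\sqrt{T}\,\norm{u-v}_{\mathcal X},
\]
and this \emph{linear} Lipschitz dependence in $L^2_x$-based spaces is what makes the contraction map in section \ref{lwp} close. Your scheme produces, via Gr\"onwall and Csisz\'ar--Kullback--Pinsker, a bound on $\norm{f_1-f_2}_{L^1_{m,x}}$, i.e.\ $L^1$ in $x$. To pass from there to $\sigma_1-\sigma_2$ in $W^{1,2}_x$ you invoke interpolation against uniform $L^p_x$ moment bounds, but those bounds are on $f_1$ and $f_2$ \emph{separately}, not on their difference; so the interpolation $\norm{g}_{L^2}\lesssim\norm{g}_{L^1}^\theta\norm{g}_{L^p}^{1-\theta}$ only yields $\norm{\sigma_1-\sigma_2}_{L^2_x}\lesssim \norm{u-v}^\theta$ with $\theta<1$ --- strictly sublinear, hence not a contraction. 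The same problem already appears in your radius-$R$ splitting to upgrade $L^1_{m,x}$ control to the weighted quantity $\iint |m|^{2q}|f_1-f_2|$: optimizing over $R$ gives $\norm{u-v}^{\alpha/(2q+\alpha)}$, not $\norm{u-v}$. The paper avoids this entirely by doing a \emph{pointwise-in-$x$} weighted $L^1_m$ estimate: it multiplies the difference equation by $|m|^{2k}\,\mathrm{sgn}_\beta(f^\alpha-g^\alpha)$, integrates only in $m$, and then runs an $L^2_x$ Gr\"onwall loop on $\int |m|^{2k}|f^\alpha-g^\alpha|_\beta\,dm$, so $L^2_x$ control is built in from the start; the entropy condition enters only to bound $\int\!\int |\nabla_{x,m}g^\alpha|^2/g^\alpha\,dm\,dx\,dt$ via Theorem~\ref{entropycon}, which is then paired by Cauchy--Schwarz against an $L^\infty_x$ bound on high moments (coming from (\ref{initstress}) and Agmon) to control the $\nabla_{x,m}g^\alpha$ forcing. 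That pairing is what makes the estimate linear and $L^2_x$-valued.

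A secondary point: you write that the entropy condition "is precisely what makes the comparison meaningful: it guarantees that the relative entropy is finite (indeed zero) at $t=0$." But $\mathcal H(0)=0$ holds simply because both solutions share the initial datum, independently of any entropy assumption. The role the entropy condition actually plays --- in the paper and in any successful variant --- is to give the Fisher-information bound needed to control the forcing $(u-v)\cdot\nabla_x g$ and $\nabla_x(u-v)\,m\cdot\nabla_m g$; your proposal absorbs the analogous cross terms into the \emph{relative} Fisher dissipation $\int f_1|\nabla\log(f_1/f_2)|^2$ by Young's inequality, which is a cleaner structural idea, but it still only yields the space--time integrated $L^1$ control and thus does not resolve the topology mismatch above.
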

The main reason why we need the finite entropy condition is that we have to deal with $\nabla_m f$ term when taking difference $\sigma_1 - \sigma_2$. It will be clear in the paper that we cannot simply use integration by parts to rule out derivatives in $m$ variable in this case. Then the above theorems can be used to prove local existence and uniqueness of the solution of the system (\ref{system}), using the contraction mapping scheme.
\begin{theorem}[Theorem \ref{localexistence}]
Given $u_0 \in \mathbb{P} W^{2,2}$ and appropriate initial data $\mu_0$ with finite entropy condition, there is a unique solution $(u, f)$ for the system (\ref{system}) for some time. $u$ is the strong solution for macroscopic equation, and $f$ is the moment solution for the microscopic equation with the velocity field $u$.
\end{theorem}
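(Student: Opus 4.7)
The strategy is a standard Banach fixed point argument that uses the two preceding theorems as black boxes. Let $X_T$ denote the space of divergence-free velocity fields on $\mathbb{R}^2 \times [0,T]$ equipped with the strong solution norm
\begin{equation*}
\norm{u}_{X_T} = \norm{u}_{L^\infty_t W^{2,2}_x} + \norm{\nabla u}_{L^2_t W^{2,2}_x},
\end{equation*}
which is the natural class for 2D Navier--Stokes with data in $\mathbb{P} W^{2,2}$. For $R > \norm{u_0}_{W^{2,2}}$ fixed and $T>0$ to be chosen small, let $B_{R,T}$ be the closed ball of radius $R$ in $X_T$, restricted to fields attaining the datum $u_0$ at $t=0$. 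Define a map $\Phi : B_{R,T} \to X_T$ by: given $v \in B_{R,T}$, invoke Theorem \ref{momentsolutionexists} to produce the unique moment solution $f_v$ of the Fokker--Planck equation driven by $v$ with initial datum $\mu_0$; form the stress $\sigma_v = \int m \otimes \nabla_m U\, f_v\, dm$; and finally let $\Phi(v) = u$ be the strong solution of Navier--Stokes with forcing $K \nabla_x \cdot \sigma_v$ and initial datum $u_0$. A fixed point of $\Phi$ is exactly the solution claimed.

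Self-mapping will follow in two steps. By Theorem \ref{momentsolutionexists}, the moment estimates (\ref{XrtypeM}), (\ref{L2H1M}), (\ref{L2H1M2}), (\ref{L2H2qM}), (\ref{LinfL1M}) control sufficiently many moments of $f_v$ to put $\sigma_v$ in $L^\infty_t W^{2,2}_x \cap L^2_t W^{3,2}_x$, with a bound depending polynomially on $\norm{v}_{X_T} \le R$ and on the initial moments of $\mu_0$. Plugging $K \nabla_x \cdot \sigma_v$ into the standard maximal regularity estimate for 2D Navier--Stokes with data in $\mathbb{P} W^{2,2}$ then yields an inequality of the form
\begin{equation*}
\norm{\Phi(v)}_{X_T} \le C_0 \norm{u_0}_{W^{2,2}} + C(R,\mu_0)\, T^{1/2},
\end{equation*}
where $C(R,\mu_0)$ stays bounded as $T\downarrow 0$. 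Picking $R=2C_0 \norm{u_0}_{W^{2,2}}$ and $T$ small therefore forces $\Phi(v)\in B_{R,T}$.

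For contraction, take $v_1, v_2 \in B_{R,T}$ and set $w = \Phi(v_1)-\Phi(v_2)$. Then $w$ solves a linearized Navier--Stokes equation whose forcing is, up to a drift involving $\Phi(v_1)$ and $\Phi(v_2)$ already controlled in $X_T$, the divergence of $K(\sigma_{v_1}-\sigma_{v_2})$. The crucial new ingredient is the Lipschitz estimate (\ref{differenceestimate}) from Theorem \ref{momentsolutionexists}, which requires the finite-entropy hypothesis (\ref{initentropy}) on $\mu_0$; it gives
\begin{equation*}
\norm{\sigma_{v_1}-\sigma_{v_2}}_{L^2_t H^1_x} \le C(R,\mu_0) \, T^{\alpha}\, \norm{v_1-v_2}_{X_T}
\end{equation*}
for some $\alpha>0$. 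Feeding this into the linear Navier--Stokes estimate for $w$ produces $\norm{\Phi(v_1)-\Phi(v_2)}_{X_T} \le C T^{\alpha'} \norm{v_1-v_2}_{X_T}$, and shrinking $T$ further makes $\Phi$ a strict contraction. The Banach fixed point theorem then delivers the unique solution $(u,f)$ on $[0,T]$.

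The main obstacle will be reconciling the topologies: Theorem \ref{momentsolutionexists} delivers the stress Lipschitz estimate only in a relatively weak, energy-type norm (since no $m$-regularity of $f$ is assumed), whereas the strong Navier--Stokes loop needs $\sigma$ at the level of $H^2_x$. Closing the contraction thus requires interpolating the weak Lipschitz bound against the stronger a priori moment bounds of Theorem \ref{momentsolutionexists} to extract a genuine $T^{\alpha'}$ smallness factor; this interpolation step, and the verification that the finite-entropy hypothesis is preserved along the iteration so that Theorem \ref{momentsolutionexists} may indeed be re-applied at each step, are the places where care is required.
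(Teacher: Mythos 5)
Your overall plan — a Banach fixed point argument in a ball of $L^\infty_t W^{2,2}\cap L^2_t W^{3,2}$, invoking Theorem \ref{momentsolutionexists} as a black box for the stress map and its Lipschitz estimate — is the same strategy the paper uses (the paper works with a mild/Duhamel fixed point for the full system $u = e^{\nu_1 t\Delta}u_0 + Q_1(u,u) + L_1(\sigma[u])$ rather than re-solving the full nonlinear Navier--Stokes at each step, but this is a cosmetic difference). However, several of the load-bearing details are off.

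First, the regularity you claim for $\sigma_v$ is one derivative too high. The bound (\ref{L2H2qM}) controls the relevant moments in $L^\infty(0,T;W^{1,2})\cap L^2(0,T;W^{2,2})$, not $L^\infty W^{2,2}\cap L^2 W^{3,2}$, and the Lipschitz estimate (\ref{differenceestimate}) is stated in precisely the same weaker topology. Consequently, the ``main obstacle'' you flag at the end — that the strong Navier--Stokes loop supposedly needs $\sigma$ at $H^2_x$ and that an interpolation step would be required — is a phantom. The stress enters the momentum equation only through $K\nabla_x\cdot\sigma$, so $\sigma\in L^2_t W^{2,2}$ already gives $\nabla_x\cdot\sigma\in L^2_t W^{1,2}$, and maximal regularity of the Stokes/heat semigroup lifts $L_1(\sigma)$ straight into $L^\infty_t W^{2,2}\cap L^2_t W^{3,2} = \mathcal{X}$. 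The Lipschitz bound (\ref{differenceestimate}) is therefore in exactly the right norm to close the contraction; no interpolation is needed or possible with the estimates on hand.

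Second, your choice $R = 2C_0\|u_0\|_{W^{2,2}}$ together with the self-map bound $\|\Phi(v)\|_{X_T}\le C_0\|u_0\| + C(R,\mu_0)T^{1/2}$ presumes that the forced contribution $L_1(\sigma_v)$ carries a $T^{1/2}$ small factor. The moment bounds (\ref{L2H2qM}) give only $\|\sigma\|\le C_2 C_3^{\delta\|v\|_{\mathcal{X}}^2}$, which does not vanish as $T\downarrow 0$; the $T^{1/2}$ smallness appears only in the difference estimate (\ref{differenceestimate}), not in the a priori bound on $\sigma$ itself. The paper accordingly enlarges the ball to $A = A_0 + 1 + C_1 C_2 C_3$, absorbing the stress forcing into the radius, and obtains contraction solely from (\ref{differenceestimate}) and the $Q_1$ estimate; self-mapping does not rely on smallness of the forcing term. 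Your scheme would fail to self-map for the radius you propose.

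Finally, the concern about ``verifying that the finite-entropy hypothesis is preserved along the iteration'' is moot: the Fokker--Planck equation is re-solved at each iterate from the same fixed initial datum $\mu_0$, so (\ref{initentropy}) is a one-time hypothesis, not a condition to propagate. What actually matters, and is satisfied by construction, is that every iterate $v$ lies in the admissible class (\ref{velinitcond}) so that Theorem \ref{momentsolutionexists} applies.
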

In addition, this result shows that for the Hookean spring potential case ($q=1$), the Oldroyd-B model is the exact closure of the system (\ref{system}). This extends the result (\cite{MR3698145}) of Barrett and S\"{u}li to a larger class of data. Next, we prove global existence and uniqueness of the system (\ref{system}). The proof uses arguments from \cite{MR2989441}, but the first step, (\ref{uL2sigmaL1}), needs a justification, since it involves an $L^1$ estimate for the stress field.
\begin{theorem}[Theorem \ref{globalexistence}]
Given $u_0 \in \mathbb{P} W^{2,2}$, appropriate initial data $\mu_0$ with finite entropy condition, and an arbitrary $T>0$ there exists a unique solution $(u, f)$ for $(0, T)$. In addition, there are explicit bounds ((\ref{uL2sigmaL1}), (\ref{B2_1}), (\ref{B2_2}), (\ref{B3}), (\ref{B4}), and (\ref{B5}) ) for the norm of the solution.
\end{theorem}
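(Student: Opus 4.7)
The plan is to take the local-in-time solution $(u,f)$ from Theorem \ref{localexistence} and extend it to an arbitrary $T>0$ by a priori estimates, following the overall bootstrap scheme of \cite{MR2989441} but inserting at the first step a rigorous free energy estimate for moment solutions, which is the main technical contribution of this part of the paper.

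The first and most delicate step is to establish the energy/entropy inequality (\ref{wfreeEdiss}) at the level of moment solutions. Formally one tests the Navier-Stokes equation against $u$ and the Fokker-Planck equation against $K(\log f + U)$; the cross terms produced by $K\nabla_x\cdot\sigma$ in the momentum equation and $(\nabla_x u)m\cdot\nabla_m f$ in the kinetic equation cancel out, leaving only the viscous dissipation $\nu_1\int|\nabla_x u|^2$ and the Fisher-information-type term $\mathcal{D}'$. Moment solutions, however, have no pointwise regularity in $m$, and testing against $\log f$ requires strict positivity and integrability of $f$. I would justify the identity by approximation: regularize the initial datum $\mu_0$ and the velocity field, invoke the local theory to produce smooth approximants for which the computation is classical, then pass to the limit using lower semicontinuity of the relative entropy together with the uniform moment bounds of Theorem \ref{momentsolutionexists}. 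This yields $u\in L^\infty_tL^2_x$ and $\nabla_x u\in L^2_tL^2_x$. The bound (\ref{uL2sigmaL1}) on $\sigma\in L^\infty_tL^1_x$ then follows from the elementary pointwise estimate $|\sigma|\le C\int(1+U)f\,dm$ together with the fact that finite relative entropy and finite mass of $f$ together control $\int U f\,dm$.

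With these two basic bounds in hand, the remaining bootstrap follows the scheme of \cite{MR2989441}. The center-of-mass diffusion $\nu_2\Delta_x f$ makes the PDE satisfied by the stress moment parabolic, so by combining maximal regularity for the diffusive stress equation with Stokes maximal regularity for $u$, one iteratively upgrades $\sigma\in L^\infty_tL^1_x$ first to $L^p_tL^p_x$ for any $p<\infty$, then to $L^2_tH^1_x$, and finally to the norms (\ref{B2_1}), (\ref{B2_2}), (\ref{B3}), (\ref{B4}), (\ref{B5}), with $u$ eventually in $L^\infty_tW^{2,2}_x$. The novelty compared with the genuinely closed Oldroyd-B setting is that the moment equation for $\sigma$ is not closed when $q>1$, so at each bootstrap step the parabolic estimate for $\sigma$ must be combined with control on the finitely many higher moments that appear as source terms, which is provided by Theorem \ref{momentsolutionexists}.

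The local existence time of Theorem \ref{localexistence} depends only on those norms of $(u_0,\mu_0)$ that are controlled by the a priori bounds above, so a standard continuation argument closes the proof on $[0,T]$, and uniqueness is inherited from the local theorem on each subinterval. The main obstacle is the first step: establishing (\ref{wfreeEdiss}) rigorously in the moment-solution framework. Neither side of the identity is manifestly well-defined without the finite entropy assumption, and the formal derivation cannot be applied directly to the weak/moment formulation; the approximation must preserve positivity, mass, and finite entropy simultaneously in order to pass to the limit, which is precisely why the finite entropy hypothesis on $\mu_0$ appears in the statement.
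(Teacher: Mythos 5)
Your proposal identifies the correct overall architecture (establish a basic energy estimate giving $u\in L^\infty_tL^2\cap L^2_tH^1$ and $\sigma\in L^\infty_tL^1$, then bootstrap as in \cite{MR2989441} using the parabolicity from $\nu_2\Delta_x$, then continue the local solution), but the route you take for the key first step (\ref{uL2sigmaL1}) is genuinely different from the paper's and, as written, has a gap. The paper does \emph{not} obtain (\ref{uL2sigmaL1}) from the free energy estimate. It is obtained by a purely moment-based coupling: test the Navier--Stokes equation against $u$, and test the approximate radial-moment equation (\ref{alpharadmoments}) for $\bar{M}_{2q}^\alpha$ against the constant weight $\tfrac{K}{2q(2q-1)}$. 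Because $\mathrm{Tr}\,\sigma$ equals $2q\,\bar{M}_{2q}$ up to a constant, the stretching term in (\ref{alpharadmoments}) produces exactly $\int\nabla_x u:\sigma$, which cancels the contribution of $K\nabla_x\cdot\sigma$ in the momentum equation after integration by parts. One then passes $\alpha\to\infty$ with the four error terms $I_1,\dots,I_4$ controlled by the uniform moment bounds. This argument uses only polynomial test functions in $m$, no logarithms, no positivity issues, and \emph{no entropy hypothesis at all}. Indeed the paper contains an explicit remark that the finite entropy condition (\ref{initentropy}) is used only for the Lipschitz estimate (\ref{differenceestimate}) in the local theory, not for the global a priori bounds.

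Your plan instead of deriving (\ref{uL2sigmaL1}) from (\ref{wfreeEdiss}) by testing against $\log f + U$ front-loads the hardest part of the paper: the free energy estimate is the content of the separate Theorem \ref{Freeenergybound}, and its proof (Section \ref{freeEbound}) requires the full machinery of Theorem \ref{entropycon}, a delicate regularization with $f_\epsilon = \rho*\omega_\epsilon + \epsilon\max(|x|,1)^{-(d+1)}$, the Csisz\'{a}r--Kullback--Pinsker inequality, and Br\'{e}zis--Lieb-type Fatou arguments. Your claim that ``finite relative entropy and finite mass of $f$ together control $\int Uf\,dm$'' is not elementary: writing $\int Uf\,dm = \mathrm{rel.ent.} - \int f\log f + \rho\log\rho - \rho\log Z$, one sees that a lower bound on $\int f\log f\,dm\,dx$ is needed, and this can degenerate to $-\infty$ as mass spreads in $x$ (this is precisely the content of the pathological example in the last remark of Section \ref{Flfielddep}); preventing this requires the $\Lambda\in L^2(\mu)$ condition and the full argument of Theorem \ref{entropycon}. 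So your route would need to reproduce most of Section \ref{freeEbound} before the bootstrap can even begin, whereas the paper's route is self-contained and elementary at that step. The subsequent bootstrap you sketch is also different in technique (maximal regularity and $L^p$-interpolation vs. the paper's energy method with Ladyzhenskaya and Gr\"{o}nwall, exploiting (\ref{L2H1M2}) which requires only $\|\nabla_x u\|_{L^2_tL^2}$), though this is a less significant divergence. In short: the student's plan is not wrong in principle, but it misidentifies the key step and substitutes the hardest estimate in the paper for what is actually the easiest, while also relying on an unjustified intermediate claim.
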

Finally, we establish a free energy estimate. Here we make an additional assumption (\ref{initdenentropy}), to guarantee that initial free energy is finite.
\begin{theorem}[Theorem \ref{Freeenergybound}]
For the solution of the system (\ref{system}), its free energy, which is defined as the sum of kinetic energy of the fluid ($\norm{u(t) }_{L^2} ^2$) and free energy of polymer distribution $\left (\int f(t) \log \left ( \frac{f(t) } {\int f(t) dm \frac{e^{-U(m)}}{Z} } \right ) dm dx \right )$, does not increase over time (bound (\ref{entropyestms}).
\end{theorem}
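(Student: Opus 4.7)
\emph{Strategy.} The plan is to differentiate the sum of the kinetic energy and the relative entropy along the flow and exhibit the result as a Lyapunov functional. With $\rho(x,t) := \int_{\mathbb{R}^2} f(x,m,t)\,dm$, $f_\infty(m) := e^{-U(m)}/Z$, and $\Lambda := \log\bigl(f/(\rho f_\infty)\bigr) = \log f + U - \log\rho + \log Z$, I test the Fokker--Planck equation against $\Lambda$, test the Navier--Stokes equation against $u$, and combine $K$ times the former with the latter. Because $\int\partial_t f\,dm = \partial_t\rho$ and $\log Z$ is constant, one has $\frac{d}{dt}H(f) = \int \Lambda\,\partial_t f\,dm\,dx$, so no separate equation for $\rho$ needs to be invoked. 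The coupling between flow and polymers must cancel exactly through Kramers' formula, leaving only nonnegative dissipation on the right.

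\emph{Formal identity.} Substituting the Fokker--Planck equation into $\int \Lambda\,\partial_t f\,dm\,dx$ and integrating by parts term by term: (i) the transport piece vanishes because $\nabla_x \cdot u = 0$ and each of the two terms arising from $\nabla_x\Lambda = \nabla_x\log f - \nabla_x\log\rho$ reduces to a divergence against an incompressible flux; (ii) the stretching piece $-\int\Lambda (\nabla_x u)m\cdot\nabla_m f\,dm\,dx$ equals, using $\mathrm{tr}(\nabla_x u) = 0$ and $\nabla_m\Lambda = \nabla_m\log f + \nabla_m U$, exactly $+\int (\nabla_x u):\sigma\,dx$ via Kramers' expression; (iii) the $m$-diffusion contributes $-\epsilon\int f|\nabla_m\Lambda|^2\,dm\,dx$ outright; (iv) the $x$-diffusion, after completing the square through the identity $\int f|\nabla_x\log f|^2\,dm - \int|\nabla_x\rho|^2/\rho\,dx = \int f|\nabla_x\Lambda|^2\,dm$, contributes $-\nu_2\int f|\nabla_x\Lambda|^2\,dm\,dx$. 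Testing Navier--Stokes against $u$ yields $\tfrac{1}{2}\frac{d}{dt}\|u\|_{L^2}^2 + \nu_1\|\nabla_x u\|_{L^2}^2 = -K\int (\nabla_x u):\sigma\,dx$, and summing $K$ times the microscopic identity with the macroscopic one cancels the two coupling terms exactly, delivering (\ref{entropyestms}) at the formal level.

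\emph{Rigorous justification.} The solution from Theorem \ref{globalexistence} has the regularity needed to test Navier--Stokes against $u$ directly; the microscopic identity is the delicate part on $\mathbb{R}^2_x\times\mathbb{R}^2_m$. Positivity of $f$, needed to define $\log f$, follows from the parabolic character of the Fokker--Planck equation applied to the approximants used to build $f$; alternatively one replaces $\log f$ by $\log(f+\eta)$ throughout and sends $\eta\downarrow 0$, using (\ref{initdenentropy}) and the fact that the negative part of $f\log f$ is dominated by $Ce^{-U/2}$ (integrable on $\mathbb{R}^2_m$) to secure a uniform lower bound on $H(f)$. All integrations by parts in $m$ are legitimate because the moment estimates (\ref{XrtypeM})--(\ref{LinfL1M}) from the moment solution theorem guarantee decay of $f$ in $m$ faster than any polynomial, absorbing the linear factor of $m$ from stretching and the $|m|^{2q-1}$ factor from $\nabla_m U$.

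\emph{Main obstacle.} The genuinely delicate step is matching the polynomial growth of $\Lambda$, which carries the unbounded potential $U = |m|^{2q}$, with the decay of $f$ well enough to produce the dissipation $\int f|\nabla_m\Lambda|^2\,dm\,dx$ as the rigorous limit of the $m$-Laplacian contribution. I therefore expect to proceed by smoothing initial data, truncating $m$ to a ball of radius $R$ (and if necessary $U$ to a bounded smooth potential), performing the identity above on the truncated domain where every term is finite and smooth, and passing to the limit $R\to\infty$: the uniform moment bounds kill boundary contributions at $|m|=R$, Fatou's lemma and lower semicontinuity give the dissipation term, and the convexity of $z\mapsto z\log z$ combined with the finite-entropy hypothesis allows passage to the limit in the relative entropy. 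Once this step is in place, the exact algebraic cancellation of the two $\int(\nabla_x u):\sigma$ couplings is the easy part and yields the desired monotonicity of the free energy on $\mathbb{R}^2_x$.
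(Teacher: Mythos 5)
Your formal identity is correct and shows a genuine understanding of the structure: testing the Fokker--Planck equation against $\Lambda = \log\bigl(f/(\rho f_\infty)\bigr)$, exploiting $\mathrm{Tr}(\nabla_x u)=0$ and Kramers' formula to cancel the $K\int(\nabla_x u):\sigma$ coupling from the Navier--Stokes energy balance, and completing the square $\int f|\nabla_x\log f|^2\,dm - |\nabla_x\rho|^2/\rho = \int f|\nabla_x\Lambda|^2\,dm$, is exactly the formal calculation underlying (\ref{entropyestms}). The paper's actual proof however proceeds along a different and more structured route: it first isolates the abstract Fokker--Planck--Kolmogorov entropy estimate (Theorem \ref{entropycon}, reviewed at the start of section \ref{freeEbound}) to control $\int f\log f$ and the logarithmic-gradient dissipation, establishing the inequality (\ref{entropyineq}) via mollification $\rho*\omega_\epsilon + \epsilon\max(|x|,1)^{-(d+1)}$, the convolution inequality (\ref{convolid}), Csisz\'ar--Kullback--Pinsker, generalized dominated convergence, and Br\'ezis--Lieb. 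Separately, it handles the potential energy part $\int U f$ by passing to the limit in (\ref{mollifiedTracestress}) for the approximants $f^\alpha$ (with Aubin--Lions compactness for the lower moment $\bar M_{2(q-1)}^\alpha$), giving (\ref{Tracepart}); it adds these to produce (\ref{entropyestimw}) and then subtracts the analogous identity (\ref{relentropypart}) for $M_{0,0}$ to arrive at the $\rho$-normalized estimate (\ref{entropyestms}).

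The gaps in your plan are concrete. First, your integration-by-parts argument rests on the claim that the moment bounds (\ref{XrtypeM})--(\ref{LinfL1M}) ``guarantee decay of $f$ in $m$ faster than any polynomial.'' They do not: they give integrated (in $x$, or in $L^p_x$-in-$m$-moments) control, not pointwise decay, and the limit density $f$ constructed in section \ref{momsolexistence} is only known a priori to be in $L^{7/6}_{loc}$, and $W^{1,1}$ a posteriori via Theorem \ref{entropycon}. Testing the equation directly against the unbounded function $\Lambda$ with only this regularity is not justified. The paper circumvents this by never testing the limit $f$ directly: the log-entropy part is obtained from the regularized density $\rho*\omega_\epsilon$ together with the abstract FPK estimate, while the potential-energy part is obtained from the smooth approximants $f^\alpha$ (fixed-point solutions of (\ref{truncatedFP})) for which integration by parts is classical. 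Second, you describe as ``the easy part'' the passage $\int_0^\tau\int(\nabla_x u):\sigma^\alpha\,dx\,dt \to \int_0^\tau\int(\nabla_x u):\sigma\,dx\,dt$; in the paper this actually requires the decomposition $I_1,\dots,I_4$ in section \ref{gwp} plus Aubin--Lions compactness for $\bar M^\alpha_{2(q-1)}$ in section \ref{freeEbound}, and is not merely a Fatou argument. Third, ``convexity of $z\log z$ plus the finite-entropy hypothesis'' does not by itself give lower semicontinuity of the relative entropy in the requisite topology on $\mathbb{R}^2_x\times\mathbb{R}^2_m$; the paper needs the Csisz\'ar--Kullback--Pinsker bound to control the negative part of $f_\epsilon\log f_\epsilon$ from below and Br\'ezis--Lieb to match the $\liminf$ with the limit for the $M_{0,0}\log M_{0,0}$ term. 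Finally, if you truncate $m$ to a ball of radius $R$ rather than cutting off the drift as in (\ref{truncatedFP}), you must choose and then control boundary conditions at $|m|=R$; the paper's $\psi_\alpha$-cutoff is designed precisely to avoid boundary terms in the approximation step.
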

The main challenge for proving this theorem is to control the limit of integrals of nonlinear terms.

\subsection{Structure of the paper}
In section \ref{FsMs}, we introduce relevant functional settings. Specifically, in section \ref{Prel}, we review some basic facts about moment problems, functional analysis, and parabolic PDEs. In section \ref{MFS}, we introduce the function space we use to describe the distribution of polymers. Then in section \ref{Momsolprop} we define the notion of moment solution and investigate its basic properties. Using the settings in the previous section, in section \ref{Solsch}, well-posedness of microscopic equation in the sense of moment solution is outlined, given smooth velocity field $u$. In section \ref{Appsol}, we present the approximation scheme. Main modifications to the original microscopic equation are introduction of cutoff in $m$ variable and mollification of initial data, so that we can integrate by parts freely and they remain smooth. In section \ref{unifBdmom}, we find uniform bounds for moments of approximate solutions, and in section \ref{momsolexistence} we find the moment solution as measures in $m$, which are determined by limits of those moments. The main issue here is that the sense of limit for moments is weaker than pointwise, so we have to rely on Aubin-Lions compactness theorem to establish pointwise convergence and apply results from moment problems. In section \ref{Flfielddep}, we investigate the dependence of stress field on fluid velocity field. In section \ref{wellposedness} we prove local and global well-posedness for the coupled system, and then provide a rigorous proof for the free energy estimate. In section \ref{lwp}, we prove local existence and uniqueness using contraction mapping scheme, and in section \ref{gwp}, we prove global existence for the system, and we obtain explicit bounds for $u$. The coupling of the energy of fluid field and the trace of stress field is crucial in the proof. In section \ref{freeEbound}, we prove the free energy estimate.

\section{Function space and Moment solution} \label{FsMs}

\subsection{Preliminaries} \label{Prel}
Let $\mathcal{M} (\mathbb{R}^2)$ be the space of signed Borel measures. $\mathcal{M} (\mathbb{R}^2)$ is a Banach space, where the norm is the total variation of $\mu$, $|\mu| (\mathbb{R}^2)$. Given $\mu \in \mathcal{M} (\mathbb{R}^2)$, we denote the moment of $\mu$ as
\begin{equation}
M_{a,b} [\mu] = \int_{\mathbb{R}^2} m_1 ^a m_2 ^b \mu(dm),
\end{equation}
where $a, b \ge 0$ are integers, the radial absolute moment of $\mu$ as
\begin{equation}
\bar{M}_{k} [\mu] = \int_{\mathbb{R}^2} |m|^k |\mu| (dm)
\end{equation}
where $k \ge 0$ is an integer, the vector of moments of degree $k$ as
\begin{equation}
\dot{\vec{M}}_{k} [\mu] = \left ( M_{k,0} [\mu], M_{k-1,1} [\mu], \cdots, M_{0,k} [\mu] \right )
\end{equation}
and the vector of moments of degrees up to $k$ as
\begin{equation}
\vec{M}_{k} [\mu] = \left ( \dot{\vec{M}}_{0}[\mu], \dot{\vec{M}}_{1} [\mu], \cdots, \dot{\vec{M}}_{k} [\mu] \right ),
\end{equation}
and the vector of moments of even degrees up to $2k$ as
\begin{equation}
\vec{M}_{2k} ^e [\mu] = \left ( \dot{\vec{M}}_{0}[\mu], \dot{\vec{M}}_{2} [\mu], \cdots, \dot{\vec{M}}_{2k} [\mu] \right ).
\end{equation}
In probability theory, moment problem refers to the problem of determining a probability measure when moments are given. We only briefly mention what is needed for us, and more detailed explanation can be found in \cite{MR3708381}. We first introduce the Riesz functional and positive semidefinite sequence.
\begin{definition}[Riesz' functional]
Given $m = \{ m_{a,b} \}_{(a,b) \in \mathbb{Z}_{\ge 0} ^2 }$, we define the associated Riesz functional $L_m$ on $\mathbb{R}[x]$ by $L_m (x^I) := m^I$ for all $I = (a,b) \in  \mathbb{Z}_{\ge 0} ^2 $.
\end{definition}
\begin{definition}[Positive semidefinite sequence]
A sequence $m = \{ m_{a,b} \}_{(a,b) \in \mathbb{Z}_{\ge 0} ^2 }$ of real numbers is said to be positive semidefinite if for any $k \in \mathbb{N}$,  $c_1, \cdots, c_k \in \mathbb{R}$ and $(a_1, b_1) , (a_2, b_2), \cdots, (a_k, b_k ) \in  \mathbb{Z}_{\ge 0} ^2$, 
\begin{equation}
\sum_{i,j=1} ^k m_{(a_i, b_i) + (a_j, b_j) } c_i c_j \ge 0
\end{equation}
holds, or equivalently, $L_m (h^2 ) \ge 0$ for any $h \in \mathbb{R}[x]$.
\end{definition}
For moment problems for measures on $\mathbb{R}^d$, $d \ge 2$, the multivariate Carleman's condition, which is a constraint on the growth rate of moments over degree, provides a sufficient condition for uniqueness.
\begin{theorem}
Let $\mu, \nu$ be positive measures in $\mathbb{R}^2$ where $M_{a,b} [\mu] = M_{a,b} [\mu] < \infty$. Let $m = \{ M_{a,b} \}_{(a,b) \in \mathbb{Z}_{\ge 0} ^2 }$. If
\begin{equation}
\sum_{n=1}^\infty L_m ( x_1 ^{2n})^{-\frac{1}{2n}} = \sum_{n=1}^\infty L_m ( x_2 ^{2n})^{-\frac{1}{2n}} = \infty \label{mulCar}
\end{equation}
then $\mu = \nu$.
\end{theorem}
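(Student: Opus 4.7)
The strategy is to reduce the bivariate determinacy to the one-dimensional Carleman criterion applied to the marginals of $\mu$ and $\nu$, and then to lift the resulting equality of marginals back to $\mathbb{R}^2$ via polynomial approximation of complex exponentials together with Fourier uniqueness.

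First, I would introduce the marginal measures $\mu_j$, $\nu_j$ obtained as pushforwards of $\mu$, $\nu$ under the coordinate projection $m\mapsto m_j$ for $j=1,2$. By hypothesis all joint moments agree, so the one-dimensional moments agree as well: $\int x^n\,\mu_j(dx)=L_m(x_j^n)=\int x^n\,\nu_j(dx)$ for every $n\ge 0$. The hypothesis (\ref{mulCar}) is exactly the one-dimensional Carleman condition for each marginal pair, so the classical univariate Carleman theorem for the Hamburger moment problem gives $\mu_1=\nu_1$ and $\mu_2=\nu_2$. A further consequence, via M.\ Riesz's density theorem for Hamburger-determinate measures, is that for each $j$ the polynomials in $x_j$ form a dense subspace of $L^2(\mu_j)=L^2(\nu_j)$.

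Next, I would show equality of the characteristic functions $\hat\mu$ and $\hat\nu$. Fix $(t_1,t_2)\in\mathbb{R}^2$ and, using the density just obtained, pick polynomials $p_N(x_1)$ and $q_N(x_2)$ with $\|e^{it_1\cdot}-p_N\|_{L^2(\mu_1)}\to 0$ and $\|e^{it_2\cdot}-q_N\|_{L^2(\mu_2)}\to 0$. Telescoping the difference $e^{it_1 x_1}e^{it_2 x_2}-p_N(x_1)q_N(x_2)$ into two pieces and applying the Cauchy--Schwarz inequality against $\mu$---using the identity $\|g(x_j)\|_{L^2(\mu)}=\|g\|_{L^2(\mu_j)}$ to pull the bounds back to the marginals---the error introduced by replacing the exponential tensor by $p_N q_N$ inside $\int\cdot\,d\mu$ tends to zero as $N\to\infty$, and the same estimate holds for $\nu$ since $\mu_j=\nu_j$. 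Because $p_N q_N$ is a polynomial in $m$, its integrals against $\mu$ and $\nu$ coincide by hypothesis, so passing to the limit gives $\hat\mu(t_1,t_2)=\hat\nu(t_1,t_2)$ for every $(t_1,t_2)$, and Fourier uniqueness forces $\mu=\nu$.

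The main technical input---and really the only step where anything beyond bookkeeping is needed---is the density of polynomials in $L^2$ of a Hamburger-determinate measure on $\mathbb{R}$, a classical theorem of M.\ Riesz. Alternatively, one can bypass this step entirely by appealing to Petersen's multivariate determinacy theorem, which asserts that a positive measure on $\mathbb{R}^d$ whose one-dimensional marginals are all Hamburger-determinate is itself determinate; combined with the first step above, this closes the argument immediately.
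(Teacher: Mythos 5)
The paper does not prove this theorem; it simply records it as a classical fact about the multivariate moment problem and refers the reader to the literature (Schm\"udgen's book). So there is no in-paper proof to compare against, and your proposal should be read as a self-contained proof of the cited statement. Your argument is correct and follows the standard route. Reducing to the one-dimensional marginals turns the hypothesis directly into the Carleman condition for the pairs $\mu_1,\nu_1$ and $\mu_2,\nu_2$, giving $\mu_1=\nu_1$, $\mu_2=\nu_2$; then your second alternative, Petersen's theorem (a positive measure on $\mathbb{R}^d$ with all one-dimensional marginals Hamburger-determinate is itself determinate), finishes immediately, and the first route --- density of polynomials, Cauchy--Schwarz against $\mu$, and Fourier uniqueness --- is in effect a proof of Petersen's theorem. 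The telescoping bookkeeping works; the only thing to record is that $\norm{p_N}_{L^2(\mu_1)}$ stays bounded, which holds since $p_N$ converges in that norm.

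The one imprecision worth flagging is the invocation of ``M.\ Riesz's density theorem.'' As usually stated, M.\ Riesz's theorem says that for a determinate $\mu$ on $\mathbb{R}$, polynomials are dense in $L^p(\mu)$ for $1\le p<2$, and that density in $L^2\left((1+x^2)\,d\mu\right)$ is equivalent to determinacy of $\mu$; determinacy of $\mu_j$ alone is not known to give $L^2(\mu_j)$-density of polynomials. The gap is harmless in your setting because you actually have the stronger Carleman condition, which is stable under dividing by $1+x^2$ (the even moments can only decrease), so $(1+x^2)^{-1}\mu_j$ is also Carleman-determinate, and M.\ Riesz applied to that measure yields the density you want in $L^2(\mu_j)$. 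The Petersen route avoids the issue entirely and is the cleaner thing to cite.
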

The condition (\ref{mulCar}) is known as the multivariate Carleman's condition. 
\begin{theorem}
Let $m = \{ m_{a,b} \}_{(a,b) \in \mathbb{Z}_{\ge 0} ^2 }$ be a positive semidefinite sequence satisfying the multivariate Carleman's condition (\ref{mulCar}). Then there exists a unique non-negative Borel measure $\mu$ such that $m_{a,b} = M_{a,b} [\mu]$ for all $(a,b)$. \label{measureexist}
\end{theorem}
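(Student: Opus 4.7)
The plan is to split the statement into uniqueness and existence. Uniqueness is an immediate consequence of the preceding theorem: any two representing measures share all moments and hence, under the multivariate Carleman's condition, must coincide. The substantive content is therefore existence.

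For existence I would proceed by an operator-theoretic approach that extends the one-variable Hamburger construction. First, equip $\mathbb{R}[x_1,x_2]$ with the symmetric bilinear form $\langle p, q \rangle := L_m(pq)$; positive semidefiniteness of $m$ ensures $\langle p, p \rangle \ge 0$. Quotienting out the null space $N = \{p : \langle p, p \rangle = 0\}$ and completing yields a Hilbert space $H$ in which the polynomial classes are dense. Second, define multiplication operators $X_j p = x_j p$ on the dense domain of polynomials. These are symmetric on $H$ because $L_m((x_j p) q) = L_m(p (x_j q))$, and they commute on this common core.

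The decisive step is to show that each $X_j$ is essentially self-adjoint and that the closures $\overline{X_1}, \overline{X_2}$ strongly commute, so that the spectral theorem for commuting self-adjoint operators produces a joint projection-valued measure $E$ on $\mathbb{R}^2$. Carleman's hypothesis enters exactly here, via Nussbaum's quasi-analytic vector criterion: since
\begin{equation*}
\| X_j^n \mathbf{1} \|_H^2 = L_m(x_j^{2n}),
\end{equation*}
the divergence $\sum_n L_m(x_j^{2n})^{-1/(2n)} = \infty$ makes $\mathbf{1}$ a quasi-analytic vector for $X_j$, which is enough to force essential self-adjointness, and a refinement of the same quasi-analyticity argument upgrades the formal commutativity on polynomials to strong commutativity of the closures. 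Once $E$ is obtained, one defines $\mu(A) := \langle E(A) \mathbf{1}, \mathbf{1} \rangle_H$ and verifies
\begin{equation*}
M_{a,b}[\mu] = \langle X_1^a X_2^b \mathbf{1}, \mathbf{1} \rangle_H = L_m(x_1^a x_2^b) = m_{a,b},
\end{equation*}
together with non-negativity of $\mu$ from $E$ being projection-valued.

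The main obstacle is the essential self-adjointness / strong commutativity step. Symmetric operators that commute on a common dense core do not in general possess strongly commuting self-adjoint closures; ruling out pathological extensions is precisely where the Carleman hypothesis (rather than mere positive semidefiniteness, which in several variables is known to be insufficient for representability) becomes indispensable. Everything else in the argument is bookkeeping around the spectral measure $E$.
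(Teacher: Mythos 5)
The paper does not prove this theorem; it only states it, citing the reference \cite{MR3708381} on moment problems. Your operator-theoretic proposal (GNS Hilbert space $H$ from the positive form $L_m$, symmetric multiplication operators $X_1, X_2$ on the polynomial domain, Carleman via quasi-analytic vectors for essential self-adjointness and strong commutativity, then $\mu(A) = \langle E(A)\mathbf{1},\mathbf{1}\rangle_H$) is precisely the standard argument found in that reference, so your route coincides with the source the paper relies on. One imprecision to correct: Nussbaum's criterion requires a \emph{dense} (total) set of quasi-analytic vectors for the symmetric operator, not merely the single vector $\mathbf{1}$; quasi-analyticity of $\mathbf{1}$ alone does not force essential self-adjointness. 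The repair in this setting is to show that the Carleman condition makes \emph{every} polynomial class $[p]$ a quasi-analytic vector for each $X_j$ — one bounds $\| X_j^n [p]\|_H$ by finitely many shifted even moments $L_m(x_j^{2(n+k)})$ of $\mathbf{1}$ and then uses the log-convexity of the sequence $n \mapsto L_m(x_j^{2n})$, which follows from positive semidefiniteness, to propagate the divergence $\sum_n L_m(x_j^{2n})^{-1/(2n)}=\infty$ to the shifted sequence — and the polynomial classes are dense in $H$ by construction, so Nussbaum then applies. The same extension to a total set of joint quasi-analytic vectors is what delivers strong commutativity of the closures $\overline{X_1},\overline{X_2}$, which, as you rightly flag, does not follow from commutation on a common core alone. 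With that point sharpened, your argument is complete and is the one the cited source gives.
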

Also we need the following result, which states that if a given measure is determined uniquely by its moments, and if moments of a sequence of measures converge to moments of this measure, then the sequence of measures converge to the measure weakly. We mainly refer to \cite{MR2267655}. 
A sequence of (signed) Borel measures on $\mathbb{R}^2$ is uniformly tight if for every $\epsilon >0$ there is a compact set $K_\epsilon \subset \mathbb{R}^2$ such that $|\mu_n| (\mathbb{R}^2 - K_\epsilon ) < \epsilon$ for all $n$. Also we define the weak convergence of measures.
\begin{definition}
A sequence of Borel measures on $\mathbb{R}^2$ $\{ \mu_n \}$ is called weakly convergent to a Borel measure $\mu$ on $\mathbb{R}^2$ if for every bounded continuous real function $f$ on $\mathbb{R}^2$, one has
\begin{equation}
\lim_{n\rightarrow \infty} \int_{\mathbb{R}^2} f(m) \mu_n (dm) = \int_{\mathbb{R}^2} f(m) \mu (dm).
\end{equation}
\end{definition}
The following lemma is useful.
\begin{lemma}
Let $\mu_n$ be a sequence of nonnegative Borel measures on $\mathbb{R}^2$ which is uniformly bounded in total variation norm and converges weakly to a Borel measure $\mu$. Then for every continuous function $f$ on $\mathbb{R}^2$ satisfying the condition
\begin{equation}
\lim_{R\rightarrow \infty} \sup_n \int_{|f| \ge R} |f| \mu_n (dm) = 0,
\end{equation}
one has
\begin{equation}
\lim_{n \rightarrow \infty} \int_{\mathbb{R}^2} f \mu_n (dm) = \int_{\mathbb{R}^2} f \mu (dm).
\end{equation} \label{uniformintegrability}
\end{lemma}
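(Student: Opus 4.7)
The plan is to reduce the general assertion to the defining property of weak convergence for bounded continuous test functions, via a continuous truncation. Fix a continuous cutoff $\phi\colon[0,\infty)\to[0,1]$ with $\phi\equiv 1$ on $[0,1]$ and $\phi\equiv 0$ on $[2,\infty)$, and set $f_R(m):=f(m)\,\phi(|f(m)|/R)$. Then $f_R$ is continuous with $|f_R|\le 2R$, $|f_R|\le|f|$ pointwise, and $|f-f_R|\le|f|\mathbf{1}_{\{|f|\ge R\}}$. Writing the decomposition
\begin{equation*}
\int f\,d\mu_n-\int f\,d\mu \;=\; \int(f-f_R)\,d\mu_n \;+\; \Bigl(\int f_R\,d\mu_n-\int f_R\,d\mu\Bigr) \;+\; \int(f_R-f)\,d\mu
\end{equation*}
splits the error into a $\mu_n$-tail, a bounded continuous term, and a $\mu$-tail.

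First I would bound the $\mu_n$-tail by $\int_{|f|\ge R}|f|\,d\mu_n$, which tends to $0$ uniformly in $n$ as $R\to\infty$ by the uniform integrability hypothesis. Next, for each fixed $R$, the middle term tends to $0$ as $n\to\infty$ because $f_R$ is bounded continuous and $\mu_n\to\mu$ weakly.

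The hard part is the $\mu$-tail, since a priori we know nothing about the integrability of $|f|$ against $\mu$ itself. To bootstrap, I would test weak convergence against the bounded continuous nonnegative function $m\mapsto|f(m)|\,\phi(|f(m)|/R)$, which gives
\begin{equation*}
\int |f|\,\phi(|f|/R)\,d\mu \;=\; \lim_{n\to\infty}\int |f|\,\phi(|f|/R)\,d\mu_n \;\le\; \sup_n \int |f|\,d\mu_n .
\end{equation*}
The right-hand side is finite, because the hypothesis yields $R_0$ with $\sup_n\int_{|f|\ge R_0}|f|\,d\mu_n\le 1$, while the complementary piece is bounded by $R_0\sup_n|\mu_n|(\mathbb{R}^2)<\infty$. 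Letting $R\to\infty$ and invoking monotone convergence produces $\int|f|\,d\mu<\infty$; dominated convergence with dominating function $|f|\in L^1(\mu)$ then sends $\int(f_R-f)\,d\mu$ to $0$ as $R\to\infty$. Combining the three estimates in the standard $\varepsilon/3$ fashion—first choose $R$ large so that the two tail contributions are each less than $\varepsilon/3$, then choose $n$ large so that the middle term is less than $\varepsilon/3$—finishes the proof.
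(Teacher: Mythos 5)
Your proof is correct and follows essentially the same strategy as the paper's: first establish $f\in L^1(\mu)$ by testing weak convergence against bounded continuous truncations of $|f|$ and invoking monotone convergence, then split the error into a $\mu_n$-tail, a bounded-continuous middle term handled by weak convergence, and a $\mu$-tail, and close with an $\varepsilon/3$ argument. The only cosmetic difference is that you use a smooth cutoff $\phi(|f|/R)$ where the paper uses the hard truncation $\max(\min(f,R),-R)$; both are continuous and bounded, so either works.
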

\begin{proof}
First we let $f_m = \min \left ( |f|, m \right )$. Then $f_m \le |f|$, and from the assumption on $f$ there is some $R_0 >0$ such that 
\begin{equation}
\sup_n \int_{|f| \ge R_0 } |f| \mu_n (dm) \le 1,
\end{equation}
while 
\begin{equation}
\sup_n \int_{|f| \le R_0 } |f| \mu_n (dm) \le R_0 \sup_n \int_{\mathbb{R}^2} \mu_n (dm) = R_0 C < \infty
\end{equation}
so that 
\begin{equation}
\sup_{n,m} \int_{\mathbb{R}^2} f_m \mu_n (dm) \le 1 + C R_0 = M <\infty.
\end{equation}
Since $f_m$ is continuous and bounded, by weak continuity we have
\begin{equation}
\int_{\mathbb{R}^2} f_m \mu(dm) \le M
\end{equation}
and by monotone convergence we have $f \in L^1 (\mu)$. For a given $\epsilon > 0$, we can pick $R > 0$ such that there is some $N>0$ such that for all $n \ge N$
\begin{equation}
\int_{|f| \ge R} |f| \mu_n (dm) + \int_{|f| \ge R} |f| \mu (dm) < \epsilon.
\end{equation}
Let $g = \max ( \min (f, R), -R)$ be the truncation of $f$ up to $R$: $g = f$ if $|f| < R$, $g = R$ if $f \ge R$, and $g = -R$ if $ f \le -R$. Since $g$ is continuous and bounded, there is some $N' >N$ such that for all $n \ge N'$
\begin{equation}
\left | \int_{\mathbb{R}^2} g \mu_n (dm) - \int_{\mathbb{R}^2} g \mu (dm) \right | < \epsilon.
\end{equation}
Then for such $n$, we have
\begin{equation}
\left | \int_{\mathbb{R}^2} f \mu_n (dm) - \int_{\mathbb{R}^2} f \mu (dm) \right | < 3\epsilon,
\end{equation}
as desired.
\end{proof}
Then the Prohorov's theorem states the following.
\begin{theorem}[Prohorov]
The sequence $\mu_n$ of (signed) Borel measures on $\mathbb{R}^2$ contains a weakly convergent subsequence if and only if $\mu_n$ is uniformly tight and uniformly bounded in the total variation norm. \label{Prohorov}
\end{theorem}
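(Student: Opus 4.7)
The plan is to prove the two implications separately, with the bulk of the argument going into the ``if'' direction. The key structural input is the Riesz representation identification $\mathcal{M}(\mathbb{R}^2) \cong C_0(\mathbb{R}^2)^*$, under which the total variation norm is exactly the dual norm, combined with separability of $C_0(\mathbb{R}^2)$ (since $\mathbb{R}^2$ is second countable and $\sigma$-compact).

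For the ``if'' direction, uniform boundedness in total variation together with the Banach--Alaoglu theorem produces a weak-$*$ subsequential limit $\mu$ with $\int f \, d\mu_{n_k} \to \int f \, d\mu$ for every $f \in C_0(\mathbb{R}^2)$; sequentiality comes from metrizability of the weak-$*$ topology on bounded sets afforded by separability of $C_0$. To upgrade this to weak convergence against arbitrary $f \in C_b(\mathbb{R}^2)$, I would fix $\epsilon > 0$, choose a compact $K_\epsilon$ given by uniform tightness so $|\mu_n|(K_\epsilon^c) < \epsilon$ for all $n$, and pick a cutoff $\chi \in C_c(\mathbb{R}^2)$ with $\chi \equiv 1$ on $K_\epsilon$ and $0 \le \chi \le 1$. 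Then $\chi f \in C_0$, so the weak-$*$ convergence handles $\int \chi f \, d\mu_{n_k}$, while $|\int (1-\chi) f \, d\mu_{n_k}|$ and $|\int (1-\chi) f \, d\mu|$ are each bounded by $\|f\|_\infty \epsilon$, the latter after verifying $|\mu|(K_\epsilon^c) \le \liminf |\mu_{n_k}|(K_\epsilon^c)$ by lower semicontinuity of total variation.

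For the ``only if'' direction, uniform boundedness follows from the uniform boundedness principle applied to $\mu_n$ viewed as continuous linear functionals on the Banach space $C_b(\mathbb{R}^2)$ of operator norm $|\mu_n|(\mathbb{R}^2)$; the assumed weak convergence along the subsequence gives pointwise boundedness of these functionals on $C_b$. For tightness, Ulam's theorem yields tightness of the limit $\mu$; fixing $\epsilon > 0$, picking a compact $K$ with $|\mu|(K^c) < \epsilon$ and a continuous cutoff supported slightly outside $K$, weak convergence produces a uniform tail bound for all large $n$, and the finitely many initial terms are each individually tight.

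The main obstacle is the upgrade from weak-$*$ (against $C_0$) to weak (against $C_b$) convergence in the ``if'' direction: without tightness, mass can escape to infinity, and constants or other non-decaying test functions will detect this loss. The role of uniform tightness is precisely to provide a common compact set that absorbs almost all of each $\mu_n$ and of the limit, letting cutoff errors be estimated uniformly. In the signed setting a mild additional subtlety is handling the total variation of the weak-$*$ limit, which requires the lower semicontinuity property $|\mu|(K_\epsilon^c) \le \liminf |\mu_{n_k}|(K_\epsilon^c)$; this is a standard consequence of duality, since for any open set $A$ one has $|\mu|(A) = \sup \{ \int f \, d\mu : f \in C_c(\mathbb{R}^2), \, \mathrm{supp}(f) \subset A, \, |f| \le 1 \}$, and each such $f$ passes to the limit by weak-$*$ convergence.
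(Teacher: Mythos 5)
The paper cites this theorem from \cite{MR2267655} without supplying a proof, so there is no in-text argument to compare against. Your ``if'' direction is the standard and correct argument: Banach--Alaoglu via $\mathcal{M}(\mathbb{R}^2)\cong C_0(\mathbb{R}^2)^*$ together with separability of $C_0(\mathbb{R}^2)$ extracts a weak-$*$ subsequential limit $\mu$, and uniform tightness combined with the lower-semicontinuity estimate $|\mu|(K_\epsilon^c)\le\liminf_k|\mu_{n_k}|(K_\epsilon^c)$ (which you justify correctly via the duality description of total variation on open sets, tested against $C_c$ functions) upgrades the convergence from $C_0$ to $C_b$ test functions. This is the direction the paper actually invokes, in Theorem \ref{weakconvergence}, where moreover the measures are nonnegative.

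Your ``only if'' direction has a genuine gap for \emph{signed} measures. The cutoff argument controls $\int(1-\chi)\,d\mu_n$, but for signed $\mu_n$ this quantity does not dominate the total variation of $\mu_n$ outside the support of $\chi$: the positive and negative parts of $\mu_n$ can cancel in the integral while both contribute to $|\mu_n|$. The argument is fine for nonnegative measures, but the signed case needs a further device --- for instance, testing against sign-adapted cutoffs built from a Hahn decomposition of each $\mu_n$, or regarding the $\mu_n$ as elements of $C_b(\mathbb{R}^2)^*\cong\mathcal{M}(\beta\mathbb{R}^2)$ and arguing that weak convergence to a Borel measure on $\mathbb{R}^2$ forbids variation escaping to the corona. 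A smaller, separate wrinkle: the statement says ``contains a weakly convergent subsequence,'' and your UBP and tightness arguments control only the subsequence along which convergence holds, not the omitted terms; a sequence such as $\mu_{2n}=\delta_0$, $\mu_{2n+1}=n\delta_n$ has a weakly convergent subsequence yet is neither uniformly tight nor uniformly bounded, so the ``only if'' implication requires reading the hypothesis as convergence of the whole sequence, which is what your argument implicitly does.
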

Using Prohorov's theorem and Lemma \ref{uniformintegrability}, we can prove the following (\cite{MR2893652}):
\begin{theorem}
Suppose that $\mu_n$ is a sequence of nonnegative Borel measures on $\mathbb{R}^2$ having all moments $M_{a,b} [\mu_n] <\infty$, and $\mu$ is a nonnegative Borel measure on $\mathbb{R}^2$ with $M_{a,b} [\mu] <\infty$ too. Suppose that $\mu$ is determined by its moment: if there is a nonnegative Borel measure $\nu$ such that $M_{a,b} [\mu] = M_{a,b} [\nu]$ for all $a, b$, then $\mu = \nu$. Also suppose that $M_{a,b} [\mu_n] \rightarrow M_{a,b} [\mu]$ for all $a, b$. Then $\mu_n$ converges to $\mu$ weakly, at least for a subsequence. \label{weakconvergence}
\end{theorem}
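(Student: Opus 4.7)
The plan is to combine Prohorov's theorem (Theorem \ref{Prohorov}) with the uniform integrability lemma (Lemma \ref{uniformintegrability}) and then invoke uniqueness of $\mu$ from its moments. First I would check that the sequence $\{\mu_n\}$ is uniformly bounded in total variation and uniformly tight. Nonnegativity gives $|\mu_n|(\mathbb{R}^2) = M_{0,0}[\mu_n]$, and by hypothesis this converges to $M_{0,0}[\mu] < \infty$, so in particular it is uniformly bounded. Tightness follows from the second moment: $M_{2,0}[\mu_n] + M_{0,2}[\mu_n]$ converges to a finite limit, hence is bounded by some constant $C$, and Chebyshev's inequality gives
\begin{equation}
\mu_n(\{|m| \ge R\}) \le \frac{1}{R^2} \int_{\mathbb{R}^2} |m|^2 \mu_n(dm) \le \frac{C}{R^2},
\end{equation}
which is uniform in $n$.

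Next, by Theorem \ref{Prohorov}, I extract a subsequence (still denoted $\mu_n$) that converges weakly to some nonnegative Borel measure $\nu$. The goal is to identify $\nu$ with $\mu$, and for this I only need to show that $M_{a,b}[\nu] = M_{a,b}[\mu]$ for all $(a,b)$, because $\mu$ is determined by its moments. Fix $(a,b)$ and apply Lemma \ref{uniformintegrability} to the continuous function $f(m) = m_1^a m_2^b$. The required uniform integrability condition is verified by bounding
\begin{equation}
\int_{|f| \ge R} |f| \, \mu_n(dm) \le \frac{1}{R} \int_{\mathbb{R}^2} |m|^{2(a+b)} \mu_n(dm),
\end{equation}
and the right-hand side is a fixed linear combination of moments of degree $2(a+b)$, each of which converges by hypothesis and is therefore uniformly bounded in $n$. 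Hence the supremum on the left tends to $0$ as $R \to \infty$, and Lemma \ref{uniformintegrability} yields
\begin{equation}
M_{a,b}[\nu] = \lim_{n\to\infty} M_{a,b}[\mu_n] = M_{a,b}[\mu].
\end{equation}
Since this holds for every $(a,b)$ and $\mu$ is moment-determined, $\nu = \mu$, which finishes the proof.

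The main (mild) obstacle is really bookkeeping: one has to make sure that both the tightness estimate and the uniform integrability hypothesis of Lemma \ref{uniformintegrability} are established using only moments that are assumed to converge, so that their eventual boundedness is automatic rather than an extra assumption. Everything else is a direct assembly of Prohorov's theorem, the uniform integrability lemma, and the uniqueness hypothesis; no refined analysis of the measures themselves is required.
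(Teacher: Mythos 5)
Your proof is correct and follows essentially the same route as the paper: Prohorov's theorem to extract a weakly convergent subsequence, Lemma \ref{uniformintegrability} to pass moments to the limit, and moment-determinacy of $\mu$ to identify the limit. The only (cosmetic) difference is that you apply the uniform integrability lemma directly to the unbounded continuous function $f(m)=m_1^a m_2^b$ with the measures $\mu_n$, whereas the paper first splits $m_1^a m_2^b$ into positive and negative parts and divides by $1+|m|^{a+b}$ to form auxiliary measures before applying the lemma with the weight $|m|^{a+b}+1$ — your version is a bit more direct but reaches the same conclusion by the same mechanism.
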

\begin{proof}
First note that $\bar{M}_{2} [\mu_n]$ is uniformly bounded, say by $C$, since it is convergent: then by Chebyshev, we have
\begin{equation}
\mu_n \left ( \{ m \in \mathbb{R}^2 : |m| > K \} \right ) \le \frac{C}{K^2},
\end{equation}
so $\mu_n$ is uniformly tight. Also since $M_{0,0} [\mu_n]$ is also uniformly bounded, so $\mu_n$ has a weakly convergent subsequence, converging to $\nu$. Note that all $M_{a,b} [\mu_n]$ is uniformly bounded due to convergence, and note that for $a, b \ge 0$ we have that $\mu_n ^{a,b, +} = \frac{(m_1 ^a m_2 ^b )^+ }{1 + |m|^{a+b} } \mu_n $ converges weakly to $\frac{(m_1 ^a m_2 ^b )^+ }{1 + |m|^{a+b} } \nu $ and $\mu_n ^{a, b, -} = \frac{(m_1 ^a m_2 ^b )^- }{1 + |m|^{a+b} } \mu_n $ converges weakly to $\frac{(m_1 ^a m_2 ^b )^- }{1 + |m|^{a+b} } \nu $. Those measures are uniformly bounded in total variation norm, and 
\begin{equation}
\begin{gathered}
\lim_{R \rightarrow \infty } \sup_n \int_{|m|^{a+b} + 1  \ge R^{a+b} +1 } (|m|^{a+b} + 1) \mu_n ^{a,b,+} \\ = \lim_{R \rightarrow \infty } \sup_n \int_{|m|^{a+b} + 1   \ge R^{a+b} +1 } (m_1 ^a m_2 ^b ) ^+ \mu_n (dm) \\
 \le \lim_{R \rightarrow \infty } \frac{1}{R} \sup_n  \int_{|m|^{a+b} + 1  \ge R^{a+b} +1 } |m|^{a+b+1} \mu_n (dm) =0
\end{gathered}
\end{equation}
and same for $\mu_n ^{a,b,-}$. Therefore, by Lemma \ref{uniformintegrability}, we have
\begin{equation}
\lim_{n\rightarrow\infty} \int_{\mathbb{R}^2} m_1 ^a m_2 ^b \mu_n (dm) = \int_{\mathbb{R}^2} m_1 ^a m_2 ^b \nu (dm)
\end{equation}
or $M_{a,b} [\mu_n] \rightarrow M_{a,b} [\nu]$. But since $\mu$ is determined by its moments, we have $\mu = \nu$.
\end{proof}
\begin{remark}
If $\{ M_{a,b} [\mu] \}_{(a,b)}$ satisfies the multivariate Carleman's condition (\ref{mulCar}), and if $\mu_n$ satisfies all the assumptions in Theorem \ref{weakconvergence}, then for all $p \ge 0$ $|m|^{2p} \mu_n$ converges weakly to $|m|^{2p} \mu$ in a subsequence. 
\end{remark}
\begin{proof}
First, we observe that
\begin{equation}
M_{2j,0} [m_1 ^{2p} \mu] =  M_{2(j+p), 0} [\mu],
\end{equation}
which guarantees that $M_{a,b} [m_1 ^{2p} \mu]$ also satisfies the multivariate Carleman's condition. The proof of this claim is given in the last. Also, $m_1 ^{2p} \mu$ is also determined by its moments, and same for $m_2 ^{2p} \mu$. Therefore, by Theorem \ref{weakconvergence} we see that $m_1 ^{2p} \mu_n$ weakly converges to $m_1 ^{2p} \mu$ and similarly $m_2 ^{2p} \mu_n$ weakly converges to $m_2 ^{2p} \mu$. Also $\mu_n$ weakly converges to $\mu$, and we see that $\frac{|m|^{2p}}{1+m_1 ^{2p} + m_2 ^{2p} }$ is a continuous bounded function, so $|m|^{2p} \mu_n $ converges to $|m|^{2p} \mu$ weakly (in subsequence). It only remains to show that 
\begin{equation}
\sum_{j=1} ^{\infty} \left ( \frac{1}{M_{2(j+p),0} } \right )^{-\frac{1}{2j} } = \infty.
\end{equation}
Since $c_j = M_{2(j+p),0}$ satisfies $c_j ^2 \le c_{j-1} c_{j+1}$, by Denjoy-Carleman Theorem (\cite{MR3708381}) it is equivalent to show
\begin{equation}
\int_1 ^\infty \frac{\log T_p (r) }{r^2} dr = \infty,
\end{equation}
where $T_p (r) = \max_j \frac{r^j}{M_{2(j+p),0}}$. However, 
\begin{equation}
T_p (r) = \max_{j\ge 1} \frac{r^j}{M_{2(j+p),0}} \ge \max_{j \ge 1+p} \frac{r^{j}}{M_{2j,0}} \frac{1}{r^p} = T(r) \frac{1}{r^p}.
\end{equation}
But note that already we know $\int_1 ^\infty \frac{\log T(r)}{r^2} dr = \infty$, and $\int_1 ^\infty \frac{\log r}{r^2} dr < \infty$ so we are done.
\end{proof}
Also, we have the following Fatou-type lemma.
\begin{lemma}[Varadarajan]
Suppose that the sequence of (signed) Borel measures $\mu_n$ converges weakly to a Borel measure $\mu$. Then for any functionally open ($f^{-1} ((0, \infty) )$ for some continuous function $f$ on $\mathbb{R}^2$) set $U$ we have
\begin{equation}
\lim \inf_{n} |\mu_n| (U) \ge |\mu| (U).
\end{equation}
In this situation, the sequence $|\mu_n |$ converges weakly to $|\mu|$ precisely when $|\mu_n| (\mathbb{R}^2) \rightarrow |\mu|(\mathbb{R}^2)$.
\label{Fatou}
\end{lemma}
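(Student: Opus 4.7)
The plan is to prove the lower semicontinuity first and then derive the equivalence from it. For the lower semicontinuity, the key idea is to produce, for any $\epsilon>0$, a continuous test function $\phi$ with $|\phi|\le \mathbf{1}_U$ whose integral against $\mu$ nearly realizes $|\mu|(U)$. Once such $\phi$ is in hand, the inequality $|\int\phi\,d\mu_n|\le \int|\phi|\,d|\mu_n|\le|\mu_n|(U)$ combined with the weak convergence $\int\phi\,d\mu_n\to\int\phi\,d\mu$ immediately yields $\liminf_n|\mu_n|(U)\ge\int\phi\,d\mu\ge|\mu|(U)-O(\epsilon)$, and sending $\epsilon\to 0$ finishes the first statement.

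To construct $\phi$, I would invoke the Hahn--Jordan decomposition $\mu=\mu^+-\mu^-$ with associated positive/negative sets $P,N$, so that $|\mu|=\mu^++\mu^-$. Inner regularity of the finite Radon measures $\mu^\pm$ on $\mathbb{R}^2$ yields disjoint compact sets $K_+\subset U\cap P$ and $K_-\subset U\cap N$ with $\mu^\pm(K_\pm)\ge\mu^\pm(U)-\epsilon$. Urysohn's lemma in the locally compact Hausdorff space $\mathbb{R}^2$, applied to the disjoint closed sets $K_\pm$ inside the open set $U$, then produces a continuous $\phi$ with compact support in $U$, $\phi\equiv 1$ on $K_+$, $\phi\equiv -1$ on $K_-$, and $|\phi|\le 1$ everywhere. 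Splitting $\int\phi\,d\mu$ into contributions over $K_+$, $K_-$, and $U\setminus(K_+\cup K_-)$, the first two terms contribute $\mu^+(K_+)+\mu^-(K_-)\ge|\mu|(U)-2\epsilon$, while the remainder is bounded in absolute value by $|\mu|(U\setminus(K_+\cup K_-))\le 2\epsilon$, giving the desired lower bound $\int\phi\,d\mu\ge|\mu|(U)-4\epsilon$.

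For the ``precisely when'' statement, the forward direction is immediate from testing against the constant function $1\in C_b(\mathbb{R}^2)$. For the reverse direction, every open subset of $\mathbb{R}^2$ is functionally open (take $f(x)=\mathrm{dist}(x,U^c)$), so the first part gives $\liminf_n|\mu_n|(V)\ge|\mu|(V)$ for every open $V$. Subtracting this from the hypothesis $|\mu_n|(\mathbb{R}^2)\to|\mu|(\mathbb{R}^2)$ yields the complementary upper bound $\limsup_n|\mu_n|(F)\le|\mu|(F)$ for every closed $F$. The standard Portmanteau theorem for finite positive Borel measures then promotes these one-sided inequalities, together with convergence of total masses, to the weak convergence $|\mu_n|\to|\mu|$.

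I expect the main obstacle to be the simultaneous control of both signs of $\mu$ inside $U$: running the argument with only $\mu^+$ or only $\mu^-$ loses the other contribution, and an arbitrary continuous $\phi$ supported in $U$ only controls $\int\phi\,d\mu$ by $|\mu|(U)$ from above, not the other way around. The Hahn--Jordan decomposition combined with disjoint inner-regular approximation by compacta and Urysohn's lemma is exactly what packages both signs into a single test function $\phi$ with $\|\phi\|_\infty\le 1$ and support in $U$, so that the weak convergence of $\mu_n$ need only be invoked once.
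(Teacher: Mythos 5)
Your argument is correct and complete. The paper itself does not supply a proof of Lemma \ref{Fatou}; it is quoted as a known result with a citation to Bogachev's \emph{Measure Theory}, so there is no in-paper proof to compare against, but your route is the standard one and it is sound. The essential content — packaging both signs of $\mu$ into a single $C_c(U)$ test function $\phi$ with $\|\phi\|_\infty\le 1$ via the Hahn decomposition, inner regularity of $\mu^{\pm}$ (valid because finite Borel measures on a Polish space are Radon), and a two-sided Urysohn function, then invoking weak convergence only once — is exactly right, and the reverse implication in the second claim correctly combines the lower bound on open sets with the convergence of total masses and the Portmanteau criterion for nonnegative finite measures. Two minor points worth making explicit if you write this up: the $[-1,1]$-valued Urysohn function is obtained as $\phi_+-\phi_-$ from two separate locally-compact Urysohn bumps supported in $U\setminus K_-$ and $U\setminus K_+$ respectively (which also gives $|\phi|\le\max(\phi_+,\phi_-)\le 1$), and the inequality $\mu^+(U)=\mu^+(U\cap P)$ (so that $K_+$ can be taken inside $U\cap P$ with $\mu^+(K_+)\ge\mu^+(U)-\epsilon$) uses that $\mu^+$ is concentrated on $P$. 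Your observation that in $\mathbb{R}^2$ every open set is functionally open (take $f(x)=\mathrm{dist}(x,U^c)$) is also correct, so the ``functionally open'' hypothesis carries no extra restriction here.
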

On the other hand, we also need the following (\cite{MR0453964}).
\begin{theorem}
Let $[0, T]$ be endowed with usual $\sigma$-algebra and Lebesgue measure. Let $X$ be a reflexive Banach space. 
For any $1 \le p < \infty$, $(L^p(0, T; X))^* \simeq L^{q}(0, T; X^*)$ where $\frac{1}{p} + \frac{1}{q} = 1$.
\label{dualofBanach}
\end{theorem}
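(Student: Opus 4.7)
The plan is to exhibit the isomorphism via the natural pairing map and then establish surjectivity using the Radon--Nikodym property of $X^*$.

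First, I would define $J: L^q(0,T;X^*) \to (L^p(0,T;X))^*$ by
\[
J(g)(f) = \int_0^T \langle g(t), f(t) \rangle_{X^*,X} \, dt.
\]
H\"older's inequality for Bochner integrals immediately gives $\|J(g)\|_{(L^p(X))^*} \le \|g\|_{L^q(X^*)}$, so $J$ is a well-defined bounded linear map. The opposite inequality, showing that $J$ is an isometry, follows by testing $J(g)$ against simple functions $f = \sum \chi_{E_i} x_i$ whose vectors $x_i \in X$ are chosen to nearly attain the dual norm $\|g(t)\|_{X^*}$ pointwise; here one uses that on a reflexive space every element of $X^*$ attains its norm on the closed unit ball of $X$, together with a measurable selection argument.

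The substantive part is surjectivity. Given $\Phi \in (L^p(0,T;X))^*$, I would define an $X^*$-valued vector measure $\nu$ on the Lebesgue $\sigma$-algebra of $[0,T]$ by
\[
\langle \nu(E), x \rangle = \Phi(\chi_E \otimes x), \qquad x \in X.
\]
Continuity of $\Phi$ gives countable additivity of $\nu$, and the bound $|\langle \nu(E),x\rangle| \le \|\Phi\| \, |E|^{1/p} \|x\|_X$ yields $\|\nu(E)\|_{X^*} \le \|\Phi\| \, |E|^{1/p}$, so the total variation of $\nu$ is finite and absolutely continuous with respect to Lebesgue measure on $[0,T]$.

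The main obstacle is representing $\nu$ by a Bochner density $g: [0,T] \to X^*$. Since $X$ is reflexive, so is $X^*$, and every reflexive Banach space has the Radon--Nikodym property; applying the vector-valued Radon--Nikodym theorem produces a strongly measurable $g$ with $\nu(E) = \int_E g \, dt$ for all measurable $E$. To upgrade $g$ from $L^1$ to $L^q$, I would test $\Phi$ against simple functions $f = \sum \chi_{E_i} x_i$ to obtain $\Phi(f) = \int_0^T \langle g, f\rangle \, dt$, then take a supremum over $f$ in the unit ball of $L^p(0,T;X)$ and invoke the converse of H\"older; density of simple functions in $L^p(0,T;X)$ (valid for $p<\infty$) then extends the identity $J(g) = \Phi$ to all of $L^p(0,T;X)$. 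The whole argument hinges on the Radon--Nikodym step, which is exactly where reflexivity is consumed; without it the density $g$ need not exist, and $L^q(0,T;X^*)$ would be a proper subspace of the dual.
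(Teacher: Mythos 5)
The paper does not prove this theorem; it simply cites it from \cite{MR0453964}, so there is no in-paper proof to compare against. Your outline is the standard textbook argument and the overall strategy is sound: build the pairing $J$, show it is isometric, represent a given functional $\Phi$ as an $X^*$-valued vector measure $\nu$, invoke the Radon--Nikodym property of the reflexive dual to produce a density $g$, and finish with the converse of H\"older to place $g$ in $L^q$.

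There is, however, a gap in the claim that the pointwise estimate $\|\nu(E)\|_{X^*}\le\|\Phi\|\,|E|^{1/p}$ yields finite total variation. That estimate alone does not: for a partition of $[0,T]$ into $n$ equal pieces it only gives $\sum_{i}\|\nu(E_i)\|\le\|\Phi\|\,n^{1/q}T^{1/p}$, which is unbounded in $n$ when $p>1$, and unlike the scalar case (where the Jordan decomposition rescues you) countable additivity does not force bounded variation for vector measures --- the $L^2[0,1]$-valued measure $\nu(E)=\chi_E$, which satisfies $\|\nu(E)\|=|E|^{1/2}$, has infinite total variation. The correct step is to test $\Phi$ against the \emph{single} simple function $\sum_i \chi_{E_i}x_i$, with each $x_i$ chosen in the unit ball of $X$ to nearly attain $\|\nu(E_i)\|_{X^*}$; since
\begin{equation}
\Bigl\|\sum_i \chi_{E_i}x_i\Bigr\|_{L^p(0,T;X)}^p=\sum_i|E_i|\,\|x_i\|_X^p\le T,
\end{equation}
this gives $\sum_i\|\nu(E_i)\|_{X^*}\le\|\Phi\|\,T^{1/p}$ uniformly over partitions, hence $|\nu|([0,T])<\infty$. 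With that repair, the Radon--Nikodym and converse-H\"older steps go through exactly as you describe; the measurable selection needed in the isometry step is a similar but standard point you have correctly flagged.
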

Also we use Banach-Alaoglu theorem.
\begin{theorem}[Banach-Alaoglu]
Let $X$ be a normed space. Hence $X^*$ is also normed with the operator norm. Then the closed unit ball of $X^*$ is compact with respect to the weak* topology.
\end{theorem}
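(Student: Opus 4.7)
The plan is to reduce the statement to \textbf{Tychonoff's theorem} by embedding the closed unit ball $B^{*} = \{\phi \in X^{*} : \|\phi\| \le 1\}$ of $X^{*}$ into a product of compact intervals indexed by $X$. For each $x \in X$, set $D_x = [-\|x\|, \|x\|]$, which is compact in $\mathbb{R}$, and let $P = \prod_{x \in X} D_x$ equipped with the product topology. By Tychonoff, $P$ is compact.

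Next, I would define the evaluation map $\Phi : B^{*} \to P$ by $\Phi(\phi) = (\phi(x))_{x \in X}$. The estimate $|\phi(x)| \le \|\phi\|\,\|x\| \le \|x\|$ shows that $\Phi$ lands in $P$. Comparing subbases shows that $\Phi$ is a homeomorphism onto its image: the weak-$*$ topology on $B^{*}$ is by definition the coarsest one making every evaluation $\phi \mapsto \phi(x)$ continuous, which is exactly the subbasis of the product topology on $P$ pulled back to $\Phi(B^{*})$.

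The substantive step is then to show that $\Phi(B^{*})$ is closed in $P$. For a point $(c_x)_{x \in X}$ in the closure, I would use the fact that the coordinate projections $\pi_x, \pi_y, \pi_{x+y}, \pi_{\lambda x}$ are continuous on $P$, so the algebraic identities defining linearity, namely $c_{x+y} = c_x + c_y$ and $c_{\lambda x} = \lambda c_x$, are preserved under passage to limits along a net in $\Phi(B^{*})$. Thus the candidate functional $\phi(x) := c_x$ is linear, and the bound $|c_x| \le \|x\|$ forces $\phi \in X^{*}$ with $\|\phi\| \le 1$. Therefore $\Phi(B^{*})$ is closed in the compact space $P$, hence compact, and pulling back via the homeomorphism $\Phi$ gives compactness of $B^{*}$ in the weak-$*$ topology.

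The main obstacle is mostly bookkeeping: one has to work with \emph{nets} rather than sequences when verifying closedness, since the weak-$*$ topology on $X^{*}$ need not be metrizable when $X$ is infinite-dimensional, so sequential arguments are insufficient. Beyond this care, there is no analytic difficulty — all the content sits in Tychonoff's theorem, which is the essential nontrivial ingredient.
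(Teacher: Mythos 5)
The paper does not give a proof of Banach--Alaoglu: it is listed in the preliminaries (Section \ref{Prel}) alongside Prohorov, Rellich--Kondrachov, and Aubin--Lions as a standard result quoted for later use, with no argument supplied. Your proposal is therefore not competing with a proof in the paper; it is simply a proof where the paper gives none.

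That said, your argument is correct and is the classical Tychonoff-based proof. The embedding $\Phi : B^* \to \prod_{x\in X} D_x$ is a homeomorphism onto its image because the weak-$*$ topology on $B^*$ and the product topology both have as a subbasis the preimages of open sets under the evaluation maps $\phi \mapsto \phi(x)$; the image is closed because linearity and the pointwise bound $|c_x| \le \|x\|$ pass to limits of nets coordinatewise; and a closed subset of a compact space is compact. You are also right that nets (not sequences) are required to verify closedness, since the weak-$*$ topology on $B^*$ is metrizable only when $X$ is separable. One small remark for completeness: if the scalar field is $\mathbb{C}$, replace $D_x = [-\|x\|,\|x\|]$ by the closed disk $\{\, z \in \mathbb{C} : |z| \le \|x\| \,\}$; the rest of the argument is unchanged. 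The essential nontrivial input is, as you note, Tychonoff's theorem, and indeed full Tychonoff (hence the axiom of choice) is needed here, since Banach--Alaoglu in this generality is known to imply the Boolean prime ideal theorem.
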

We also need Rellich-Kondrachov theorem and Aubin-Lions lemma.
\begin{theorem}[Rellich-Kondrachov]
Suppose that $\Omega$ is bounded domain with smooth boundary. Then the inclusion $W_0 ^{1,2} (\Omega) \subset L^2 (\Omega)$ and $W_0 ^{1,1} (\Omega) \subset L^1 (\Omega)$ are compact.
\end{theorem}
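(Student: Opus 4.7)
The plan is to deduce both compact embeddings from the Fréchet--Kolmogorov characterization of precompact sets in $L^p(\mathbb{R}^2)$, applied to functions extended by zero outside $\Omega$. Recall that a bounded family $F \subset L^p(\mathbb{R}^2)$ (for $1 \le p < \infty$) is relatively compact if and only if (i) $F$ is bounded in $L^p$, (ii) $\sup_{f \in F} \lVert \tau_h f - f \rVert_{L^p} \to 0$ as $|h|\to 0$, where $\tau_h f(x) = f(x+h)$, and (iii) for every $\epsilon>0$ there is $R>0$ with $\int_{|x|>R} |f|^p\, dx < \epsilon$ for all $f \in F$.

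First I would treat the $W_0^{1,2}(\Omega)\hookrightarrow L^2(\Omega)$ case. Take a bounded sequence $\{u_n\} \subset W_0^{1,2}(\Omega)$ and extend each $u_n$ by zero to a function $\tilde u_n \in W^{1,2}(\mathbb{R}^2)$; the extension has the same $W^{1,2}$ norm up to a constant because the trace vanishes. Boundedness of $\{\tilde u_n\}$ in $L^2(\mathbb{R}^2)$ gives (i), while tightness (iii) is trivial since every $\tilde u_n$ is supported in the bounded set $\Omega$. The equicontinuity (ii) follows from a standard difference-quotient estimate: first for $u \in C_c^\infty(\mathbb{R}^2)$ one has
\begin{equation}
\lVert \tau_h u - u \rVert_{L^2(\mathbb{R}^2)} \le |h|\, \lVert \nabla u \rVert_{L^2(\mathbb{R}^2)},
\end{equation}
by writing $u(x+h)-u(x) = \int_0^1 h \cdot \nabla u(x+th)\, dt$ and applying Minkowski's integral inequality; the estimate extends to $W^{1,2}(\mathbb{R}^2)$ by density. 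Applied to $\tilde u_n$ this gives uniform equicontinuity, so Fréchet--Kolmogorov produces a subsequence converging in $L^2(\mathbb{R}^2)$, and restriction to $\Omega$ yields the desired convergence in $L^2(\Omega)$.

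For the $W_0^{1,1}(\Omega)\hookrightarrow L^1(\Omega)$ case, the argument is identical once the difference-quotient bound $\lVert \tau_h u - u \rVert_{L^1(\mathbb{R}^2)} \le |h|\, \lVert \nabla u \rVert_{L^1(\mathbb{R}^2)}$ is established; this again follows from the fundamental theorem of calculus applied in the mollified setting and then passing to the limit via density of smooth compactly supported functions in $W^{1,1}(\mathbb{R}^2)$.

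The only mild subtlety is the density step in $W^{1,1}$, since $W^{1,1}$ is nonreflexive and care is needed that mollification does not destroy the $W_0^{1,1}$ structure after extension by zero; however, because we extend first and then mollify on all of $\mathbb{R}^2$, standard mollifier estimates $\lVert \eta_\epsilon * \tilde u - \tilde u \rVert_{W^{1,1}(\mathbb{R}^2)} \to 0$ suffice, and no trace theory is needed. The potentially delicate point in the whole argument is thus simply checking that the extension by zero really preserves the $W^{1,p}$ norm for $W_0^{1,p}$ functions, which in turn relies on the smoothness of $\partial \Omega$ to make the trace vanish in the sense of distributions; this is exactly where the smoothness hypothesis on the boundary enters.
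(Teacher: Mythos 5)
The paper does not prove this theorem; it is stated as a cited preliminary, so there is no in-paper argument to compare against. Your Fr\'{e}chet--Kolmogorov proof is correct and is one of the standard routes. One point to tidy up, however: the final paragraph misattributes the role of the smoothness hypothesis. For $u \in W_0^{1,p}(\Omega)$, extension by zero preserves the $W^{1,p}$ norm \emph{exactly} (not merely up to a constant), and this requires no regularity of $\partial\Omega$ at all---it holds for an arbitrary open set. Indeed, take $u_n \in C_c^\infty(\Omega)$ with $u_n \to u$ in $W^{1,p}(\Omega)$; the zero extensions $\tilde u_n$ lie in $C_c^\infty(\mathbb{R}^2)$ with the same Sobolev norms, form a Cauchy sequence in $W^{1,p}(\mathbb{R}^2)$, and their limit is the zero extension of $u$. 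No trace theory enters. Boundary smoothness matters for $W^{1,p}(\Omega)$ without the subscript zero, where one needs a genuine extension operator; for the $W_0^{1,p}$ version stated here the hypothesis is extraneous, and your argument, once that sentence is repaired, proves the theorem for any bounded open $\Omega$.
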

\begin{theorem}[Aubin-Lions]
Let $X_0, X_1, X_2$ be three Banach spaces, $X_0 \subset X_1 \subset X_2$. Suppose that $X_0$ is compactly embedded in $X_1$ and $X_1$ is continuously embedded in $X_2$. For $1 \le p, q, \le \infty$, let
\begin{equation}
W = \{ u \in L^p ([0, T] ; X_0 ) \, : \, \partial_t u \in L^q ([0, T] ; X_2) \}.
\end{equation}
If $p < \infty$, the embedding $W \subseteq L^p ([0, T]; X_1)$ is compact. If $p = \infty$ and $q > 1$, the embedding $W \subseteq L^p ([0, T]; X_1 )$ is compact.
\end{theorem}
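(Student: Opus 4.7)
My plan is to reduce the compactness claim to two ingredients: an interpolation-type lemma of Ehrling that exploits the compactness of $X_0 \hookrightarrow X_1$, and a time-equicontinuity estimate derived from the bound on $\partial_t u$. Let $\{u_n\}$ be a bounded sequence in $W$; the goal is to extract a subsequence convergent in $L^p([0,T]; X_1)$.

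First I would establish Ehrling's lemma: for every $\eta > 0$ there exists $C_\eta > 0$ such that $\|v\|_{X_1} \leq \eta \|v\|_{X_0} + C_\eta \|v\|_{X_2}$ for all $v \in X_0$. The proof is by contradiction; a failing sequence $v_k$ with $\|v_k\|_{X_1}=1$, $\|v_k\|_{X_0}$ bounded, and $\|v_k\|_{X_2} \to 0$ would, by compactness of $X_0 \hookrightarrow X_1$, have an $X_1$-limit $v$ with $\|v\|_{X_1}=1$, while continuity of $X_1 \hookrightarrow X_2$ forces $v=0$, a contradiction.

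Next I would use the derivative bound. For $q > 1$ and $0 \leq s \leq t \leq T$, H\"older's inequality gives
\begin{equation}
\|u_n(t) - u_n(s)\|_{X_2} \leq \int_s^t \|\partial_t u_n(\tau)\|_{X_2} \, d\tau \leq |t-s|^{1-1/q} \|\partial_t u_n\|_{L^q([0,T]; X_2)},
\end{equation}
so $\{u_n\}$ is uniformly H\"older continuous into $X_2$. Since $X_0 \hookrightarrow X_2$ is compact (compact composed with continuous), the set $\{u_n(t)\}$ is relatively compact in $X_2$ for each $t$, and Arzel\`a-Ascoli produces a subsequence (not relabeled) converging in $C([0,T]; X_2)$. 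For the case $p < \infty$ with $q = 1$, one would instead invoke the Kolmogorov-Riesz compactness criterion, controlling the translates $u_n(\cdot + h) - u_n(\cdot)$ in $L^1([0,T]; X_2)$ directly from $\int_0^h \|\partial_t u_n\|_{X_2} \, d\tau$ and again passing through Ehrling.

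Finally I apply Ehrling's lemma pointwise in time and integrate:
\begin{equation}
\|u_n - u_m\|_{L^p([0,T]; X_1)} \leq \eta \|u_n - u_m\|_{L^p([0,T]; X_0)} + C_\eta T^{1/p} \|u_n - u_m\|_{C([0,T]; X_2)},
\end{equation}
where the first term is bounded by $2\eta M$ uniformly in $n,m$ and the second vanishes along the subsequence; sending first $n,m \to \infty$ and then $\eta \to 0$ shows $\{u_n\}$ is Cauchy in $L^p([0,T]; X_1)$. The $p = \infty$ case with $q > 1$ is identical, with $L^\infty$ in place of $L^p$. The \emph{main delicate point} is Ehrling's lemma, since it is the only place where compactness of $X_0 \hookrightarrow X_1$ (rather than mere continuity) enters; without it, one only has the trivial bound $\|v\|_{X_1} \leq C \|v\|_{X_0}$, which is useless for upgrading weak-type information in $X_2$ to strong convergence in $X_1$.
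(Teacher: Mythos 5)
The paper states Aubin--Lions as a cited classical result and gives no proof of its own, so I can only evaluate your argument on its own terms. The overall strategy (Ehrling's lemma $+$ time-equicontinuity from the derivative bound $+$ a compactness criterion in time $+$ integrate Ehrling) is the standard one, and the Ehrling step is correct and indeed the crux where compactness of $X_0\hookrightarrow X_1$ enters. The case $p=\infty$, $q>1$ is handled correctly: essential boundedness in $X_0$ plus continuity into $X_2$ places $u_n(t)$ for \emph{every} $t$ in the $X_2$-closure of a fixed ball of $X_0$, which is compact, so Arzel\`a--Ascoli applies and your final inequality closes the argument.

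There is a genuine gap in the case $p<\infty$, even with $q>1$. You invoke Arzel\`a--Ascoli by asserting that $\{u_n(t)\}$ is relatively compact in $X_2$ \emph{for each fixed} $t$, on the grounds that $X_0\hookrightarrow X_2$ is compact. But a uniform bound on $\|u_n\|_{L^p(0,T;X_0)}$ with $p<\infty$ does not produce any time $t$ at which $\sup_n\|u_n(t)\|_{X_0}<\infty$: the sets where each $u_n$ is large can cover the whole interval, so pointwise precompactness in $X_2$ is simply not available, and Arzel\`a--Ascoli (hence convergence in $C([0,T];X_2)$ and your final displayed inequality) cannot be used as written. The standard repair, due to Simon, is to first show $\sup_n\|u_n(\cdot+h)-u_n\|_{L^p(0,T-h;X_2)}\to 0$ as $h\to0$ (which follows from the derivative bound for any $q\ge1$), then feed this through Ehrling to get the same uniform smallness of translates in $L^p(X_1)$, and finally deduce compactness by approximating each $u_n$ by its Steklov time-averages $\frac{1}{2\delta}\int_{t-\delta}^{t+\delta}u_n$, which \emph{are} pointwise bounded in $X_0$ by H\"older and hence do satisfy Arzel\`a--Ascoli's hypotheses. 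Your appeal to ``Kolmogorov--Riesz for $q=1$'' gestures at this, but the averaging step is needed for all $p<\infty$, not just $q=1$, and is not optional; also, for $p<\infty$ the final bound should be in $L^p([0,T];X_2)$, not $C([0,T];X_2)$.
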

Also we use results from parabolic theory, especially existence, uniqueness, and estimates of Fokker-Planck-Kolmogorov equations. We mainly refer to \cite{MR3443169}. Suppose we are given an open set $\Omega_T = \Omega \times (0, T) \subset \mathbb{R}^d \times (0, T)$, where $\Omega \subset \mathbb{R}^d$ is an open set and $T>0$, and Borel functions $a^{ij}$, $b^i$, and $c$ on $\Omega_T$, where $i,j = 1, \cdots, d$. We suppose that the matrix $A = \left ( a^{ij} \right )_{ij}$ is symetric nonnegative definite. We discuss the Fokker-Planck-Kolmogorov equation of the form 
\begin{equation}
\partial_t \mu = \partial_{x_i} \partial_{x_j} \left ( a^{ij} \mu \right ) - \partial_{x_i} \left ( b^i \mu \right ). \label{FPK}
\end{equation}
Let 
\begin{equation}
L_{A, b} \phi = a^{ij} (x, t) \partial_{x_i} \partial_{x_j} \phi (x, t) + b^{i} (x, t) \partial_{x_i} \phi (x, t),
\end{equation}
which is the adjoint operator of the right side of (\ref{FPK}). 
\begin{definition}
A locally bounded Borel measure $\mu$ on the domain $\Omega_T$, which can be written as $\mu = \mu_t (dx) dt$ is a solution to the Cauchy problem (\ref{FPK}) with $\mu|_{t=0} = \nu$ if $a^{ij}, b^i \in L^1 _{loc} (\mu)$, for every function $\phi \in C_0 ^\infty (\Omega_T )$ we have
\begin{equation}
\int_{\Omega_T} \left ( \partial_t \phi + L_{A, b} \phi \right ) d\mu = 0, \label{weaksolution}
\end{equation}
and for every function $f \in C_0 ^\infty (\Omega)$ there is a set of full measure $J_f \subset (0, T)$, depending on $f$, such that
\begin{equation}
\int_{\Omega} f(x) \nu (dx) = \lim_{t \rightarrow 0, t \in J_f} \int_\Omega f(x) \mu_t (dx).
\end{equation}
\end{definition}
Note that this definition is equivalent to the following: for every function $\phi \in C_0 ^\infty (\Omega)$ there exists a set of full measure $J_\phi \subset (0, T)$, depending on $\phi$, such that for all $t \in J_\phi$ we have
\begin{equation}
\int_\Omega \phi d\mu_t = \int_\Omega \phi d\nu + \lim_{\tau \rightarrow 0+, \tau \in J_\phi } \int_{\tau} ^t \int_\Omega L_{A, b} \phi d\mu_s ds. \label{solCauchy}
\end{equation}
We have the following results. For the proof one can see \cite{MR3443169}, where more general statements and proof are given. Let $\Omega = \mathbb{R}^d$.
\begin{theorem} [Existence, existence of density, and uniqueness of Fokker-Planck-Kolmogorov equation] \label{FPKeu}
Suppose that for every ball $U$ in $\mathbb{R}^d$ the functions $a^{ij}$, $b^i$ are bounded in $U \times [0, T]$ and there exist positive numbers $m$ and $M$ such that $$m \mathbb{I}_d \le A(x, t) \le M \mathbb{I}_d , (x, t) \in \Omega \times [0, T]$$ and there exist positive number  $\lambda$ such that  $$|a^{ij} (x, t) - a^{ij} (y, t) | \le \lambda |x-y| , x, y \in \mathbb{R}^d, t \in (0, T) $$ holds. Then for every probability measure $\nu$, there is a solution to the Cauchy problem (\ref{FPK}) with $\mu |_{t=0} = \nu$, where each $\mu_t$ is a nonnegative Borel measures on $\mathbb{R}^d$, such that for almost all $t \in (0, T)$ we have
\begin{equation}
\mu_t (\mathbb{R}^d) \le \nu(\mathbb{R}^d). \label{subprob}
\end{equation}
Also, $\mu = \rho dx dt$ for some locally integrable function $\rho$. If $J = [T_0, T_1] \subset (0, T)$, $W$ is a neighborhood of $\bar{U} \times J$ with compact closure in $\Omega_T$, then for each $r < \frac{d+2}{d+1} $ one has
\begin{equation}
\norm{\rho}_{L^r (U \times J) } \le C(d, r, \lambda, m, M, W ) \left ( \mu (W) + \norm{b}_{L^1 (W, \mu) } \right )
\end{equation}
where $C(d, r, \Lambda, m, M, W )$ depends only on $d, r, \lambda, m, M,$ and the distance from $U \times J$ to $\partial W$. In addition, suppose further that $\mu$ satisfies the following: for every ball $U \subset \mathbb{R}^d$
\begin{equation}
|b| \in L^2 (\mu, U \times (0, T) ) \label{SP}
\end{equation}
and 
\begin{equation}
|a^{ij}| + |b^i| \in L^1 \left (\mu, \mathbb{R}^d \times (0, T) \right ).
\end{equation}
Then there is no solution to the Cauchy problem (\ref{FPK}) with $\mu |_{t=0} = \nu$ satisfying (\ref{subprob}) and (\ref{SP}) other than $\mu$. Furthermore, suppose that there is a function $V \in C^{2,1} (\Omega_T) \cap C(\mathbb{R}^d \times [0, T) )$ such that for every compact interval $[\alpha, \beta ] \subset (0, T)$ we have
\begin{equation}
\lim_{|x| \rightarrow \infty } \min_{t \in [\alpha, \beta] } V(x, t) = + \infty
\end{equation}
and for some $K, H \in L^1 ((0, T))$, where $H \ge 0$, and for all $(x, t) \in \Omega_T$
\begin{equation}
\partial_t V(x, t) + L_{A,b} V (x, t) \le K(t) + H(t) V(x, t),
\end{equation}
and also $V(\cdot ,0 ) \in L^1 (\nu)$. Then for almost all $t \in (0, T)$ we have $\mu_t (\mathbb{R}^d ) = \nu (\mathbb{R}^d ) = 1$.
\end{theorem}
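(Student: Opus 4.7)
The plan is to establish the four claims---existence of a subprobability solution, existence and $L^r$-integrability of a density, uniqueness under the extra integrability condition (\ref{SP}), and mass conservation via the Lyapunov function---by a combination of parabolic approximation, local regularity, duality, and Gronwall. First, for existence I would mollify the coefficients in $x$ to obtain smooth, uniformly elliptic, Lipschitz data $a_\varepsilon^{ij}, b_\varepsilon^i$ and mollify $\nu$ to a smooth probability density $\rho_0^\varepsilon$. Classical parabolic theory yields a unique smooth classical solution $\rho^\varepsilon \ge 0$ of (\ref{FPK}) with $\int \rho^\varepsilon\, dx \le \nu(\mathbb{R}^d)$ by the maximum principle. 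Viewing $\mu^\varepsilon = \rho^\varepsilon\, dx\, dt$ as a locally finite Borel measure, the uniform mass control plus local bounds on $a_\varepsilon, b_\varepsilon$ let me extract a weak-$*$ subsequential limit $\mu = \mu_t(dx)\, dt$. Using the Lipschitz continuity of $a^{ij}$ (so that $a_\varepsilon^{ij}$ converge locally uniformly) and the local boundedness of $b^i$, I pass to the limit in the weak formulation (\ref{weaksolution}) and obtain a solution satisfying (\ref{subprob}).

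For the density and its $L^r$-bound on $U \times J$, the strict ellipticity $m\mathbb{I} \le A \le M\mathbb{I}$ places (\ref{FPK}) in the class of uniformly parabolic equations with bounded measurable coefficients and a first-order drift. A local Aronson--Serrin / Moser iteration then produces the interior $L^r$-estimate, with the exponent $r < (d+2)/(d+1)$ arising as the scaling-critical exponent when one is willing to measure the drift only through $\norm{b}_{L^1(W,\mu)}$; the constant depends only on the parabolicity data $m,M$, the Lipschitz constant $\lambda$, and the distance from $U\times J$ to $\partial W$, as claimed.

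Uniqueness is the step I expect to be the main obstacle, and I would handle it by duality. Given two solutions $\mu,\mu'$ with the same initial data and both satisfying (\ref{SP}), and given $g \in C_0^\infty(\mathbb{R}^d\times(0,T))$, I would solve the backward Cauchy problem $\partial_t \phi + L_{A,b_\varepsilon} \phi = -g$ with $\phi(\cdot,T)=0$ for a mollified drift $b_\varepsilon$. The key technical input---available because $m\mathbb{I}\le A \le M\mathbb{I}$ and $|b|\in L^2(\mu,U\times(0,T))$---is a uniform-in-$\varepsilon$ bound on $\nabla\phi$ in $L^2(\mu+\mu')$, which makes the formal identity
\begin{equation}
\int g\, d(\mu-\mu') = -\int \bigl(\partial_t \phi + L_{A,b}\phi\bigr)\, d(\mu-\mu') + \mathrm{err}(\varepsilon)
\end{equation}
rigorous after testing the weak formulation against a suitable mollification of $\phi$ and controlling the commutator $(b - b_\varepsilon)\cdot\nabla\phi$ in $L^1(\mu+\mu')$ via Cauchy--Schwarz and (\ref{SP}). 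Sending $\varepsilon\to 0$ gives $\int g\, d(\mu-\mu')=0$ for a dense family of $g$, hence $\mu=\mu'$.

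Finally, for the mass-preservation statement I would test the integrated weak formulation (\ref{solCauchy}) against $V\cdot\chi_R$, where $\chi_R$ is a smooth cutoff equal to $1$ on $\{V\le R\}$ and supported in $\{V\le 2R\}$. The hypothesis $\partial_t V + L_{A,b}V \le K(t)+H(t)V$ together with $V(\cdot,0)\in L^1(\nu)$ yields, via Gronwall applied to $t \mapsto \int V\, d\mu_t$, a uniform-in-$R$ bound
\begin{equation}
\int V\, d\mu_t \le \Bigl(\norm{V(\cdot,0)}_{L^1(\nu)} + \norm{K}_{L^1}\Bigr) \exp\Bigl(\norm{H}_{L^1}\Bigr).
\end{equation}
The coercivity $\min_{t\in[\alpha,\beta]}V(x,t)\to\infty$ as $|x|\to\infty$ then rules out any loss of mass at spatial infinity, forcing $\mu_t(\mathbb{R}^d)=\nu(\mathbb{R}^d)=1$ for a.e.\ $t\in(0,T)$.
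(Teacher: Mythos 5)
The paper does not actually prove Theorem \ref{FPKeu}: it is quoted verbatim from \cite{MR3443169} (Bogachev--Krylov--R\"{o}ckner--Shaposhnikov), with the remark ``For the proof one can see \cite{MR3443169}.'' So there is no in-paper argument to compare your sketch against; I can only evaluate it on its own terms.

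Your outline is a reasonable high-level picture of how results of this type are proved, and the existence step (mollify coefficients and data, use the maximum principle for sub-probability, extract a weak-$*$ limit) and the mass-conservation step (test (\ref{solCauchy}) against $V\cdot\chi_R$ and run Gronwall) are essentially sound. The $L^r$-bound with $r < \frac{d+2}{d+1}$ also has the right flavor --- it is the exponent one gets from parabolic $L^1$-to-$L^r$ local regularity when the drift is controlled only in $L^1(\mu)$ --- though the reference derives it from a careful local energy estimate rather than a direct Moser iteration, since $\rho$ is a priori only a measure.

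The place where I would push back is the uniqueness argument. You propose a pure duality scheme: solve the backward adjoint problem $\partial_t\phi + L_{A,b_\varepsilon}\phi = -g$, mollify the drift, obtain a uniform $L^2(\mu+\mu')$ bound on $\nabla\phi$, and pass to the limit. The trouble is that the only assumption on the drift is $|b| \in L^2(\mu)$, which gives no control on $b$ except against the unknown solution itself. To control the commutator $(b-b_\varepsilon)\cdot\nabla\phi$ in $L^1(\mu+\mu')$ you would need $\nabla\phi$ to be uniformly bounded in $L^2$ of both solution measures, but the adjoint equation only sees $b_\varepsilon$ and $a$, not $\mu$ or $\mu'$, so it is genuinely unclear how to propagate the needed estimate. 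The proof in \cite{MR3443169} avoids this by working instead with the difference of solutions directly: roughly speaking, it establishes $W^{1,1}_{\mathrm{loc}}$ regularity of the densities, tests the equation for $\rho - \rho'$ with a renormalized nonlinearity of the difference, and uses the logarithmic-gradient estimate (\ref{entroprodbd}) together with the $L^2(\mu)$ control on $b$ to close. That route sidesteps the backward problem entirely, and is the reason Theorem \ref{entropycon} is stated immediately after --- the entropy-production bound is part of the machinery. Your duality strategy is the classical Stroock--Varadhan one, but it really requires $b$ bounded or in a strong $L^p_{x,t}$ class; under $|b|\in L^2(\mu)$ alone it does not obviously go through.
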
 
Also we have the following result for the square integrability of logarithmic gradients. First we adopt the following convention: for $\rho (x, t) \in W^{1,1}_{loc},$ $$\frac{\nabla_x \rho (x, t)}{\rho (x, t)} := 0 $$ whenever $\rho (x, t) = 0$. Also we recall that a probability measure $\nu$ on $\mathbb{R}^d$ has finite entropy if $\nu = \rho_0 dx$ and $$ \int_{\mathbb{R}^d} \left  | \log \rho_0 (x) \right | \rho_0 (x) dx < \infty.$$
\begin{theorem} [Bounds on entropy production] \label{entropycon}
Suppose that a measure $\mu = (\mu_t)$ is a solution to the Cauchy problem (\ref{FPK}) with $\mu |_{t=0} = \nu$, each $\mu_t$ is a probability measure, and same condition for $a^{ij}$ as in Theorem \ref{FPKeu} holds, and $|b| \in L^2 (\mu, \Omega_T )$. Suppose also that the function $\Lambda (x) = \log \max \left ( |x|, 1 \right )$ belongs to $L^2 (\mu, \Omega_T )$. If the initial distribution $\nu = \rho_0 dx$ on $\mathbb{R}^d = \Omega$ has finite entropy, then $\mu_t = \rho(\cdot, t ) dx$, where $\rho (\cdot, t) \in W^{1,1} (\mathbb{R}^d )$, moreover, for every $\tau < T$ we have
\begin{equation}
\int_0 ^\tau \int_{\mathbb{R}^d} \frac{ | \nabla_x \rho (x, t) | ^2 }{\rho(x, t) } dxdt < \infty. \label{entroprodbd}
\end{equation}
If the integrals $\int_{\mathbb{R}^d} \rho(x, t) \Lambda(x) dx$ remain bounded as $t \rightarrow T$, then (\ref{entroprodbd}) is true with $\tau = T$.
\end{theorem}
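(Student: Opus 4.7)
The plan is to adapt the classical de la Vallée-Poussin / entropy-production argument. By Theorem \ref{FPKeu}, the solution already admits a density $\mu_t = \rho(\cdot,t)\,dx$ with $\rho \in L^r_{loc}$ for some $r > 1$; what remains is to upgrade this to $W^{1,1}_{loc}$ and extract the quantitative bound (\ref{entroprodbd}). The driving calculation is to test the equation against $\chi_R^2 \log(\rho + \epsilon)$, where $\chi_R$ is a smooth spatial cutoff equal to $1$ on $B_R$ and supported in $B_{2R}$. Setting $\Phi_\epsilon(s) = (s+\epsilon)\log(s+\epsilon) - (s+\epsilon)$, so that $\Phi_\epsilon'(s) = \log(s+\epsilon)$ and $\Phi_\epsilon''(s) = 1/(s+\epsilon)$, is the technically convenient device because it desingularizes the logarithm at $\rho = 0$.

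Carrying out the formal computation, the second-order term produces the dissipation
\begin{equation*}
\int \chi_R^2 \Phi_\epsilon''(\rho) \, a^{ij} \partial_i \rho \, \partial_j \rho \, dx \;\ge\; m \int \chi_R^2 \frac{|\nabla_x \rho|^2}{\rho + \epsilon} \, dx
\end{equation*}
by the uniform ellipticity $A \ge m \mathbb{I}_d$, while the drift term is absorbed via Cauchy-Schwarz:
\begin{equation*}
\left| \int \chi_R^2 \frac{b \cdot \nabla_x \rho}{\rho + \epsilon} \, \rho \, dx \right| \;\le\; \frac{m}{2} \int \chi_R^2 \frac{|\nabla_x \rho|^2}{\rho + \epsilon} \, dx + C \int \chi_R^2 |b|^2 \, d\mu_t,
\end{equation*}
using $\rho^2/(\rho + \epsilon) \le \rho$. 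The boundary error terms created by $\nabla \chi_R$ are controlled by the local $L^r$ estimate on $\rho$ from Theorem \ref{FPKeu}, the Lipschitz bound on $a^{ij}$, and $|b| \in L^2(\mu)$. Integrating in time over $(0,\tau)$ yields, schematically,
\begin{equation*}
\int \chi_R^2 \Phi_\epsilon(\rho(\tau)) \, dx + \frac{m}{2} \int_0^\tau \!\! \int \chi_R^2 \frac{|\nabla_x \rho|^2}{\rho + \epsilon} \, dx \, dt \;\le\; \int \chi_R^2 \Phi_\epsilon(\rho_0) \, dx + C \int_0^\tau \!\!\int \chi_R^2 |b|^2 \, d\mu_s \, ds + E_R,
\end{equation*}
with $E_R$ collecting the cutoff errors.

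The main obstacle is passing to $R \to \infty$ while keeping the entropy on the left bounded below, since $\Phi_\epsilon(\rho)$ is not sign-definite and we only have logarithmic tail information on $\mu$. This is precisely where the hypothesis $\Lambda \in L^2(\mu)$ enters: choose a reference density $g(x) = c(1+|x|)^{-d-1}$ and split $\rho \log \rho = \rho \log(\rho/g) + \rho \log g$. The first piece satisfies $\rho \log(\rho/g) \ge \rho - g$ by the elementary inequality $s \log s \ge s - 1$ applied to $s = \rho/g$, giving a uniform lower bound $\ge -\int g \, dx = -C$. The second piece obeys $|\rho \log g| \le C\rho + C \Lambda \rho$, whose integral is controlled by $\int \Lambda \, d\mu_t \le (\int \Lambda^2 d\mu_t)^{1/2}$, finite after time integration by hypothesis. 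Thus the entropy is uniformly bounded below in $R$, producing a uniform upper bound on the dissipation; Fatou after $\epsilon \to 0$ and $R \to \infty$ yields (\ref{entroprodbd}). The assertion $\rho(\cdot,t) \in W^{1,1}$ follows from $\int |\nabla \rho| \, dx \le (\int \rho \, dx)^{1/2} (\int |\nabla \rho|^2/\rho \, dx)^{1/2}$, and the endpoint $\tau = T$ comes out of exactly the same estimate once $\int \rho(\cdot,t) \Lambda \, dx$ is assumed uniformly bounded.

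The hardest technical point I anticipate is rigorously justifying the integration by parts, since $\log(\rho + \epsilon)$ is not a priori an admissible test function for a merely distributional solution. I would handle this by first convolving $\rho$ with a spatial mollifier $\eta_\delta$, obtaining a smooth $\rho_\delta$ that solves the mollified equation up to a commutator $[\eta_\delta *, \partial_i \partial_j](a^{ij}\rho) - \partial_i[\eta_\delta *, b^i]\rho$; by the DiPerna-Lions commutator lemma this vanishes in $L^1_{loc}$ as $\delta \to 0$ thanks to the Lipschitz regularity of $a^{ij}$ and local $L^1$ integrability of $b\rho$. Performing the entropy computation for $\rho_\delta$ and then letting $\delta \to 0$ (using lower semicontinuity for the quadratic dissipation functional) makes the whole argument rigorous and concludes the proof.
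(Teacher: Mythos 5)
Your high-level plan is the right one — test against a desingularized logarithm, get the dissipation from ellipticity and Cauchy--Schwarz on the drift, and obtain a lower bound on the entropy at time $\tau$ from tail decay of $\mu$ controlled via $\Lambda\in L^2(\mu)$. The entropy lower-bound device you use (reference density $g=c(1+|x|)^{-d-1}$ and $s\log s \ge s-1$) is essentially the same as the paper's Csisz\'ar--Kullback--Pinsker step and is fine. But the mollification step you propose to make the argument rigorous has a genuine gap, and it is exactly the place where the proof's real content lives.

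The gap: you propose to mollify $\rho$ by $\eta_\delta$, claim the mollified density solves the same FPK equation up to a commutator, and control the commutator via the DiPerna--Lions lemma. For the drift term, that lemma requires Sobolev regularity of $b$ in $x$; here $b$ is only in $L^2(\mu,\Omega_T)$ and need have no weak derivatives in $x$ at all, so the commutator $\eta_\delta*\partial_i(b^i\rho)-\partial_i(b^i\,\eta_\delta*\rho)$ does not go to zero in any usable topology. (The second-order commutator is also not handled by the first-order DiPerna--Lions result, but that is secondary; the paper's review of the proof simplifies by taking $a^{ij}$ constant, so this term can be dismissed.) The correct device is to \emph{not} try to commute the coefficients with the mollifier: one works with the exact mollified equation $\partial_t(\rho*\omega_\epsilon)=(a^{ij}\rho)*\partial_{x_i}\partial_{x_j}\omega_\epsilon-(b^i\rho)*\partial_{x_i}\omega_\epsilon$, which carries the mollified flux $(b^i\rho)*\omega_\epsilon$ rather than $b^i(\rho*\omega_\epsilon)$. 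The resulting drift contribution is then controlled by the convolution inequality
\begin{equation*}
\int_{\mathbb{R}^d}\frac{|(\psi f_1)*f_2|^2}{f_1*f_2}\,dx\;\le\;\int_{\mathbb{R}^d}|\psi|^2 f_1\,dx\;\int_{\mathbb{R}^d}f_2\,dx ,
\end{equation*}
applied with $f_1=\rho$, $f_2=\omega_\epsilon$, $\psi=b^i$; this gives $\int |(b^i\rho)*\omega_\epsilon|^2/(\rho*\omega_\epsilon)\,dx\le\|b^i\|^2_{L^2(\mu_t)}$, uniformly in $\epsilon$, so the Cauchy--Schwarz absorption closes. This convolution/Jensen-type inequality is the nontrivial technical ingredient your proposal is missing; without it the mollification scheme does not close, because you cannot write the mollified drift as $b\cdot\nabla\rho_\delta$ plus a vanishing error. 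Once you have it, the additive regularization $f_\epsilon=\rho*\omega_\epsilon+\epsilon\max(1,|x|)^{-(d+1)}$ also lets you work directly on all of $\mathbb{R}^d$ (since $|\log f_\epsilon|\lesssim\log(1/\epsilon)+1+\Lambda$ and $\Lambda\in L^2(\mu)$), avoiding the cutoff error terms $E_R$ you would otherwise have to control.
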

We also briefly review the proof of Theorem \ref{entropycon} in section \ref{wellposedness}, to establish the free energy estimate.

\subsection{Function space based on moments} \label{MFS}
We introduce relevant function spaces and the notion of moment solution. We first define two power series based on moments: for $\mu \in L_{loc} ^1 (\mathbb{R}^2, \mathcal{M} (\mathbb{R}^2) )$ we let
\begin{equation}
\begin{gathered}
F[\mu] ^e (r) = \sum_{p=0} ^\infty \frac{\norm{\bar{M}_{2p} [\mu]}_{L^2} }{(2p)!} r^{2p}, \\
F[\mu](r) = \sum_{p=0} ^\infty \frac{\norm{\bar{M}_{p} [\mu]}_{L^2} }{p!} r^p.
\end{gathered}
\end{equation}
Note that $F[\mu] (r)$ is a norm in the space
\begin{equation}
X^r = \{ \mu \in L_{loc} ^1 (\mathbb{R}^2, \mathcal{M} (\mathbb{R}^2 ) ) \, : \, \norm{\mu}_{X^r} = F[\mu] (r) < \infty \}.
\end{equation}
Then $F[\mu] ^e (r)$ is an equivalent norm in $X^r$. Obviously, $F[\mu]^e (r) \le F[\mu] (r)$. On the other hand, by Cauchy-Schwarz inequality,
\begin{equation}
\frac{\norm{\bar{M}_{2j+1}}_{L^2}}{(2j+1)!} r^{2j+1} \le \frac{\norm{\bar{M}_{2j}}_{L^2}}{(2j)!} r^{2j} + \frac{\norm{\bar{M}_{2(j+1)}}_{L^2}}{(2(j+1))!} r^{2(j+1)}  
\end{equation}
and we conclude $F[\mu] (r) \le 3 F [\mu]^e (r)$. We also have the following:
\begin{lemma}
Suppose that $\{M_{a,b} \}_{a,b}$ is a sequence of functions on $\mathbb{R}^2$ such that there is a sequence of functions $\bar{M}_{k}$ on $\mathbb{R}^2$ where
\begin{equation}
\begin{gathered}
\left | M_{a,b} (x) \right | \le \bar{M}_{a+b} (x), \,\, \mathrm{for} \,\,\mathrm{almost} \,\, \mathrm{all} \,\, x, \\
\sum_{p=0} ^\infty \frac{\norm{\bar{M}_{p} }_{L^2} } {p!} r^p < \infty \,\, \mathrm{for} \,\, \mathrm{some} \,\, r >0.
\end{gathered}
\end{equation}
Then for almost every $x \in \mathbb{R}^2$, the sequence $\{M_{a,b} (x) \}_{(a,b)} $ satisfies the multivariate Carleman's condition (\ref{mulCar}). \label{pointwiseCar}
\end{lemma}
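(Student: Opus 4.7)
The plan is to convert the hypothesis, which is $L^2$ summability of the power series built from the norms of $\bar{M}_p$, into a pointwise a.e. growth bound of the form $\bar{M}_p(x) \le G(x)\, p!/r^p$, and then read off the multivariate Carleman condition from Stirling's formula.

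First I would set $G_N(x) = \sum_{p=0}^N \frac{\bar{M}_p(x)}{p!} r^p$. Since $\bar{M}_p \ge 0$ almost everywhere (as each $\bar{M}_p$ dominates an absolute value), the sequence $G_N$ is monotone nondecreasing in $N$ and the hypothesis says that $\{G_N\}$ is Cauchy in $L^2(\mathbb{R}^2)$. By completeness of $L^2$ there is a limit $G \in L^2$, and by monotonicity the pointwise limit $G(x) = \sum_{p=0}^\infty \frac{\bar{M}_p(x)}{p!}r^p$ agrees with the $L^2$ limit a.e.\ and is therefore finite for a.e.\ $x \in \mathbb{R}^2$. Fix any such $x$. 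Since every term of a convergent series of nonnegative numbers is bounded by its sum, one obtains
\begin{equation}
\bar{M}_p(x) \le G(x)\,\frac{p!}{r^p} \qquad \text{for every } p \ge 0,
\end{equation}
and in particular $|M_{2n,0}(x)| \le \bar{M}_{2n}(x) \le G(x)\,\frac{(2n)!}{r^{2n}}$, and similarly for $M_{0,2n}(x)$.

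Next I would take $(2n)$-th roots and invoke Stirling's bound $((2n)!)^{1/(2n)} \le 2n$. For $n$ large enough (depending on $x$) that $G(x)^{1/(2n)} \le 2$, this yields
\begin{equation}
|M_{2n,0}(x)|^{1/(2n)} \le \frac{4n}{r}, \qquad \text{hence} \qquad |M_{2n,0}(x)|^{-1/(2n)} \ge \frac{r}{4n},
\end{equation}
so $\sum_{n=1}^\infty L_m(x_1^{2n})^{-1/(2n)} = \sum_{n=1}^\infty M_{2n,0}(x)^{-1/(2n)} = \infty$; the identical estimate for $M_{0,2n}(x)$ gives divergence of $\sum_n L_m(x_2^{2n})^{-1/(2n)}$ as well. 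For indices $n$ where $M_{2n,0}(x) = 0$ we use the convention that the term contributes $+\infty$, which only strengthens divergence.

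The only subtle point is the first step, namely justifying that the $L^2$-summability of the norms can be upgraded to a pointwise everywhere-finite majorant $G(x)$. This is where monotonicity of partial sums in $p$ (since $\bar{M}_p \ge 0$) is essential; without sign control one would only have $L^2$ convergence and an a.e.\ subsequential pointwise limit, insufficient to dominate all terms simultaneously. Once this is secured, the rest is a routine calculation with Stirling's approximation.
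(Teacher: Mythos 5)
Your proof is correct, and it takes a somewhat different measure-theoretic route from the paper's. The paper applies Chebyshev's inequality to each $\bar{M}_{2p}$, takes a union bound over $p$, and sends $\lambda \to 0$ to conclude that for almost every $x$ there is a rate $\lambda(x) \in (0,r)$ with $\bar{M}_{2p}(x) \le (2p)!\,\lambda(x)^{-2(p+1)}$ for all $p$; the growth rate is allowed to depend on $x$. You instead observe that the partial sums $G_N(x)=\sum_{p\le N}\bar{M}_p(x)r^p/p!$ are monotone in $N$ and Cauchy in $L^2$, so their pointwise limit $G$ is finite a.e.\ and every term is dominated by $G(x)$, giving $\bar{M}_p(x)\le G(x)\,p!/r^p$ with a \emph{fixed} exponential rate $r$ and a pointwise-finite ``constant'' $G(x)$. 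Both bounds are of the same general form $\bar{M}_{2p}(x)\lesssim C(x)(2p)!/s(x)^{2p}$ and both feed into Stirling identically, so the conclusions are interchangeable; your version avoids the union-bound bookkeeping and makes the source of finiteness (completeness of $L^2$ plus monotone convergence) a little more transparent, whereas the paper's Chebyshev computation yields an explicit, quantitative decay of the measure of the exceptional set in $\lambda$. One small point worth keeping in mind for the write-up: the nonnegativity $\bar{M}_p\ge 0$ a.e., which you correctly flag as essential for monotonicity, must be collected on a single full-measure set over all $p$ simultaneously before fixing $x$, and the final step implicitly uses $M_{2n,0}(x)\ge 0$ (true in the intended application to moments of nonnegative measures, and handled by your convention when $M_{2n,0}(x)=0$) to pass from the bound on $\bar M_{2n}$ to divergence of $\sum_n M_{2n,0}(x)^{-1/(2n)}$.
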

\begin{proof}
It suffices to show that for almost every $x$, 
\begin{equation}
\sum_{p=0} ^\infty \bar{M}_{2p} (x) ^{-\frac{1}{2p} } = \infty.
\end{equation}
By Chebyshev's inequality, we have
\begin{equation}
\begin{gathered}
\left | \left \{  x \,: \bar{M}_{2p}  (x) > (2p)!  \left ( \frac{1 }{\lambda} \right  )^{2(p+1)} \right \} \right |  \le 2  \left (\frac{\norm{\bar{M}_{2p}  }_{L^2}} {(2p)!} \lambda ^{2p} \right )^2 \lambda^4 .
\end{gathered}
\end{equation}
Therefore, we have
\begin{equation}
\begin{gathered}
\left | \left \{  x \,: \mathrm{for \,\, some \,\, } p \ge 0, \,\, \bar{M}_{2p}  (x) > (2p)!  \left ( \frac{1 }{\lambda} \right  )^{2(p+1)} \right \} \right |  \le 2   \sum_{p = 0} ^\infty \left (\frac{\norm{\bar{M}_{2p}  }_{L^2}} {(2p)!} \lambda ^{2p} \right )^2 \lambda^4
\end{gathered}
\end{equation}
and by taking $\lambda \rightarrow 0$ we conclude that for almost every $x$, there exist some $\lambda = \lambda(x) \in (0, r) $ such that 
\begin{equation}
\bar{M}_{2p}  (x) \le (2p)!  \left ( \frac{1 }{\lambda} \right  )^{2(p+1)} \,\, \mathrm{for} \,\, \mathrm{all} \,\, p \ge 0
\end{equation}
and thus we have
\begin{equation}
\sum_{p=0} ^\infty \bar{M}_{2p} (x) ^{-\frac{1}{2p} } \ge \sum_{p=1} ^{\infty} \left ( (2p)! \left ( \frac{1}{\lambda} \right )^{2(p+1)} \right )^{-\frac{1}{2p}} \ge C \lambda \sum_{p=1} ^\infty \frac{1}{p} = \infty.
\end{equation}
\end{proof}
We define
\begin{equation}
X^{k,r} = \{ \mu \in X^r \, : \, \nabla_{m, x} ^\ell \mu \in X^r \,\, \mathrm{for} \,\, \mathrm{all} \,\, 0 \le \ell \le k . \}
\end{equation}
We have the following:
\begin{lemma}
$X^{k,r}$ is a Banach space for all $k \ge 0$ with norm $\norm{\mu}_{X^{k,r} } = \sum_{|\ell |\le k }  \norm{\nabla_{x,m}^\ell \mu }_{X^r} $. \label{completeness}
\end{lemma}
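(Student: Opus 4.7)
The plan is to reduce the $k\ge 1$ case to the case $k=0$, and to construct the limit measure pointwise in $x$ through total-variation convergence, then recover the $X^r$-norm convergence by a three-fold Fatou argument (for measures, in $L^2$, and for the series defining $F[\,\cdot\,](r)$).

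First I would show that $X^r = X^{0,r}$ is a normed space: the only nontrivial point is the triangle inequality, which follows from $|\mu+\nu| \le |\mu| + |\nu|$ as nonnegative measures, hence $\bar M_p[\mu+\nu] \le \bar M_p[\mu] + \bar M_p[\nu]$ pointwise in $x$. For completeness, let $\{\mu_n\} \subset X^r$ be Cauchy. Extract a subsequence with $\|\mu_{n_k}-\mu_{n_{k+1}}\|_{X^r} < 2^{-k}$; then in particular $\|\bar M_0[\mu_{n_k}-\mu_{n_{k+1}}]\|_{L^2} < 2^{-k}$, so the function $x \mapsto \sum_{k} |\mu_{n_{k+1}}-\mu_{n_k}|(x,\mathbb R^2)$ has finite $L^2$-norm and therefore is finite for a.e.\ $x$. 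Since $\mathcal M(\mathbb R^2)$ with the total-variation norm is a Banach space, for a.e.\ $x$ the sequence $\{\mu_{n_k}(x,\cdot)\}$ has a TV-limit, which I call $\mu^\infty(x,\cdot)$; outside the null set I set $\mu^\infty(x,\cdot)=0$.

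To see $\mu^\infty \in X^r$, note that TV-convergence implies $|\mu_{n_k}|(x,\cdot)\to|\mu^\infty|(x,\cdot)$ in TV (via $\bigl||\nu_1|-|\nu_2|\bigr|_{\mathrm{TV}}\le\|\nu_1-\nu_2\|_{\mathrm{TV}}$) and hence weakly; then by the lower-semicontinuous Portmanteau/Fatou inequality applied to the nonnegative continuous function $|m|^p$, $\bar M_p[\mu^\infty](x) \le \liminf_k \bar M_p[\mu_{n_k}](x)$ for a.e.\ $x$, and Fatou in $L^2$ yields $\|\bar M_p[\mu^\infty]\|_{L^2} \le \liminf_k \|\bar M_p[\mu_{n_k}]\|_{L^2}$, which is uniformly bounded because $\{\mu_n\}$ is Cauchy. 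To upgrade to convergence in $X^r$, fix $n\ge N$ large; since $\mu_n - \mu_{n_k} \to \mu_n - \mu^\infty$ in TV a.e., the same Fatou chain gives
\begin{equation*}
\|\mu_n - \mu^\infty\|_{X^r} = \sum_{p\ge 0}\frac{\|\bar M_p[\mu_n-\mu^\infty]\|_{L^2}}{p!}r^p \le \liminf_{k\to\infty}\|\mu_n-\mu_{n_k}\|_{X^r} \le \varepsilon,
\end{equation*}
using monotone/Fatou convergence for the nonnegative series to interchange $\sum_p$ and $\liminf_k$.

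For $k\ge 1$, if $\{\mu_n\}$ is Cauchy in $X^{k,r}$, each derivative $\nabla^\ell_{x,m}\mu_n$ with $|\ell|\le k$ is Cauchy in $X^r$; by the $k=0$ case they converge to some $\nu_\ell \in X^r$, and in particular $\mu_n \to \nu_0 =: \mu^\infty$ in $X^r$. Since $X^r$-convergence implies convergence in $\mathcal D'(\mathbb R^2_x\times\mathbb R^2_m)$ (testing against $\phi\in C_c^\infty$ is controlled by $\|\phi\|_\infty \|\bar M_0[\mu_n-\mu^\infty]\|_{L^1(\mathrm{supp}_x\phi)}$, which in turn is bounded by the $X^r$-norm via Cauchy--Schwarz), the distributional identity $\nabla^\ell_{x,m}\mu^\infty = \nu_\ell$ follows, and $\mu^\infty \in X^{k,r}$ with $\mu_n\to\mu^\infty$ in $X^{k,r}$.

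The main obstacle is the first construction step: the Cauchy property gives control of the $X^r$-norm (hence an $L^2$-type summability of moments), but to define a candidate limit I need TV convergence of $\mu_{n_k}(x,\cdot)$ in $\mathcal M(\mathbb R^2)$ for almost every $x$ simultaneously, and this is exactly what the geometric $2^{-k}$ choice together with the bound $\bar M_0[\mu_{n_k}-\mu_{n_{k+1}}](x) = |\mu_{n_k}-\mu_{n_{k+1}}|(x,\mathbb R^2)$ provides. Once that common-subsequence-for-a.e.-$x$ convergence is established, all subsequent identifications (limit in $X^r$, uniqueness of the derivative $\nu_\ell$) are consequences of Fatou and the fact that $X^r$-convergence is strong enough to pass to distributional limits.
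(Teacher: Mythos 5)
Your proof is correct, and it takes a genuinely different — and substantially more elementary — route than the paper's. The paper reconstructs the limit object from its moment sequence: it shows all the $L^2$-limit moments $\{M_{a,b}(x)\}$ are positive semidefinite for a.e.\ $x$ and satisfy the multivariate Carleman condition (Lemma~\ref{pointwiseCar}), then invokes the moment-problem existence theorem (Theorem~\ref{measureexist}) to produce a measure $\mu(x,\cdot)$ whose moments match, and along the way uses a holomorphic-function/Cauchy-integral argument to verify that $F[\mu_n](r)$ converges. You instead observe that the $k=0$ case can be handled by the classical fast-Cauchy-subsequence trick: since $\bar M_0[\mu](x) = |\mu|(x,\mathbb R^2)$ is the total-variation mass, $L^2$-summability of $\bar M_0[\mu_{n_k}-\mu_{n_{k+1}}]$ forces, for a.e.\ $x$, that $\{\mu_{n_k}(x,\cdot)\}$ is Cauchy in the Banach space $\mathcal M(\mathbb R^2)$, so a genuine measure-valued limit exists pointwise in $x$ with no moment-problem machinery at all. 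The remaining steps (TV convergence $\Rightarrow$ weak convergence of $|\mu_{n_k}|(x,\cdot)$, Portmanteau/Fatou for $|m|^p$, Fatou in $L^2$, Fatou for the series, and passage to distributional derivatives for $k\ge 1$) are routine and you verify them carefully, including the needed estimate $\bigl\| |\nu_1| - |\nu_2| \bigr\|_{\mathrm{TV}} \le \|\nu_1-\nu_2\|_{\mathrm{TV}}$. The trade-off: your argument is shorter and avoids the delicate claim in the paper that $M_{a,b}[\mu_n^\pm]$ converge in $L^2$ (which does not obviously follow from the $X^r$-Cauchy property, since the Jordan decomposition is not continuous in TV), but the paper's longer route simultaneously exhibits the limit's moment sequence as positive semidefinite and Carleman, facts that are reused elsewhere. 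Both approaches establish completeness; yours is the cleaner proof of this particular lemma.
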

\begin{proof}
First note that it suffices to show that $X^r$ is Banach: for a Cauchy sequence $\mu_n$ in $X^{k,r}$ each $\nabla_{x,m} ^\ell \mu_n$ is Cauchy in $X^r$, and $\mu_n \rightarrow  \mu$ in $X^r$ implies $\lim_{n } \nabla_{x,m} ^\ell \mu_n = \nabla_{x,m} ^\ell \mu$. Suppose that $\mu_n$ is a Cauchy sequence in $X^r$. Then we know that all $\bar{M}_{k} [\mu_n]$ is a Cauchy sequence in $L^2 (\mathbb{R}^2)$ and so converges to $\bar{M}_k (x) \in L^2 (\mathbb{R}^2)$. Furthermore, we see that
\begin{equation}
F[\mu_n] (r) = \sum_{p=0} ^\infty \frac{\norm{\bar{M}_p[\mu_n]}_{L^2} }{p!} r^p \rightarrow \sum_{p=0} ^\infty \frac{\norm{\bar{M}_p}_{L^2}}{p!} r^p
\end{equation}
because $G_n (z) = F[\mu_n] (z)$ is a sequence of holomorphic functions in closed $r$-ball which is Cauchy in sup norm:
\begin{equation}
\begin{gathered}
\left | G_n (z) - G_m (z) \right | \le \sum_{p=0} ^\infty \frac{\norm{\bar{M}_p [ \mu_n] - \bar{M}_p [\mu_m] }_{L^2} }{p!} z^p \le F[\mu_n - \mu_m] (z)
\end{gathered}
\end{equation}
so $G_n (z)$ converges to some holomorphic function $G(z)$ uniformly in closed $r$-ball. Then we consider the power series representation of $G(z)$ near 0: its coefficients can be represented by Cauchy integral formula and we see
\begin{equation}
\frac{G^{(m)} (0)}{m!} = \frac{1}{2\pi i } \int_{C(0, a)} \frac{G(z)}{z^{m+1}} dz = \lim_{n\rightarrow \infty} \frac{1}{2\pi i} \int_{C(0,a)} \frac{G_n (z) }{z^{m+1}} dz = \lim_{n \rightarrow \infty} \norm{\bar{M}_m [\mu_n ] }_{L^2}.
\end{equation}
Note that $ M_{a,b} [ |\mu_n | ] (x)   \le \bar{M}_{a+b} [\mu_n] (x) $ and so by dominated convergence we have that
\begin{equation}
\begin{gathered}
M_{a,b} [| \mu_n | ] \rightarrow M_{a,b} ^{tv} \,\,\mathrm{in}\,\, L^2 (\mathbb{R}^2), \\
M_{a,b} [ \mu_n ^+ ] \rightarrow M_{a,b} ^+ \,\,\mathrm{in}\,\, L^2 (\mathbb{R}^2), \\
M_{a,b} [ \mu_n ^- ] \rightarrow M_{a,b} ^- \,\,\mathrm{in}\,\, L^2 (\mathbb{R}^2), \\
M_{a,b} ^{tv} = M_{a,b} ^+ + M_{a,b} ^-, \,\, \left | M_{a,b} ^+ \right |, \left | M_{a,b} ^- \right |,  \left | M_{a,b} ^{tv} \right | \le \bar{M}_{a+b}.
\end{gathered}
\end{equation}
where $\mu_n ^+$ is the positive part (due to Jordan decomposition) of $\mu_n$ and $\mu_n ^-$ is the negative part. In particular, the sequences $\{M_{a,b} ^+ (x) \}_{a,b}$ and $\{M_{a,b} ^- (x) \}_{a,b}$ are positive semidefinite sequences for almost every $x$, because they are pointwise limit of positive semidefinite sequences. Furthermore, by Lemma \ref{pointwiseCar}, and Theorem \ref{measureexist} we see that for almost all $x$, there is a nonnegative measure $\mu^+ (x)$ and $\mu^- (x)$ and subsequences $\mu_{n_k} ^+, \mu_{n_k} ^-$ such that
\begin{equation}
\begin{gathered}
M_{a,b} ^+ (x) = \int_{\mathbb{R}^2} m_1 ^a m_2 ^b \mu^+ (x; dm), \, \, M_{a,b} ^- (x) = \int_{\mathbb{R}^2} m_1 ^a m_2 ^b \mu^- (x; dm),\\
\lim_{k\rightarrow \infty} M_{a,b} [\mu_{n_k} ^+] (x) = M_{a,b} [ \mu^+] (x), \,  \lim_{k\rightarrow \infty}M_{a,b} [\mu_{n_k} ^-] (x) = M_{a,b} [ \mu^-] (x) \,\, \mathrm{a.e.},\\
\bar{M}_{p} (x) = \int_{\mathbb{R}^2} |m|^p \left ( \mu^+ (x; dm) + \mu^- (x; dm) \right ) = \bar{M}_p [\mu^+] (x) + \bar{M}_p [\mu^-] (x).
\end{gathered}
\end{equation}
Furthermore, by putting $\mu (x; dm) = \mu^+ (x; dm) - \mu^- (x; dm)$ we see that
\begin{equation}
\begin{gathered}
F[\mu] (r) = \sum_{p=0} ^\infty  \frac{\norm{\bar{M}_p [\mu]}_{L^2} }{p!} r^p = \lim_{n\rightarrow \infty} F[\mu_n] (r) < \infty.
\end{gathered}
\end{equation}
To show that $\mu_n$ converges to $\mu$ in $X^r$, we evaluate the equivalent norm $F[\mu - \mu_n ] ^e (r)$: first we know that from Theorem \ref{weakconvergence} and its remark, we see that up to subsequence $|m|^{2p} \mu_{m_k} = |m|^{2p} (\mu _{m_k} ^+ - \mu_{m_k} ^- )$ converges weakly to $|m|^{2p} \mu$. Therefore, $|m|^{2p} (\mu_n - \mu )$ is a weak limit of $|m|^{2p} (\mu_n - \mu_{m_k})$ for some subsequence $\mu_{m_k}$. Therefore, by Lemma \ref{Fatou}, we have
\begin{equation}
\lim \inf_k \left ( |m|^{2p} | \mu_n - \mu_{m_k} | \right ) (\mathbb{R}^2) (x) = \lim \inf_k \bar{M}_{2p} [ \mu_n - \mu_{m_k } ] (x) \ge \bar{M}_{2p} [ \mu_n - \mu ] (x)
\end{equation}
for almost all $x$. Therefore by Fatou's lemma, we have
\begin{equation}
\begin{gathered}
F [\mu_n - \mu] ^e (r) = \sum_{p=0} ^\infty \frac{ \norm{ \bar{M}_{2p} [\mu_n - \mu] }_{L^2}}{(2p)!} r^p  \\
\le \lim \inf_k  \sum_{p=0} ^\infty \frac{ \norm{ \bar{M}_{2p} [\mu_n - \mu_{m_k} ] }_{L^2}}{(2p)!} r^p
\end{gathered}
\end{equation}
which converges to $0$ as $n \rightarrow \infty$. Therefore, $\mu_n \rightarrow \mu$ in $X^r$. 
\end{proof}
Also, we consider approximation to identity by Gaussian in the space $X^{r}$. Let $g_\delta$ be the Gaussian function
\begin{equation}
g_\delta (z) = \frac{1}{2\pi \delta^2 } \exp \left ( -\frac{|z|^2}{2 \delta^2} \right ) 
\end{equation}
with standard deviation $\delta$. We only have weak convergence, but this is enough for our purpose.
\begin{lemma}
Given $\mu_0 \in X^{r}$ with $\mu_0(x; dm)$ nonnegative measures for all $x$, for almost every $x$ $\mu_0 ^\delta (x) = g_{\delta} *_x  (g_{\delta} *_m  \mu_0 )$ converges to $\mu_0 (x) $ weakly. Furthermore,
\begin{equation}
\begin{gathered}
M_{a,b} [\mu_0 ^\delta ] \rightarrow M_{a,b} [\mu_0] \,\,\mathrm{in}\,\, W^{k,2} \,\, \mathrm{if} \,\, M_{a',b'} [\mu_0] \in W^{k,2}\, \mathrm{for} \, \mathrm{all} \, a' + b' \le a+b\\
\bar{M}_p [\mu_0 ^\delta] \rightarrow \bar{M}_p [\mu_0] \,\,\mathrm{in}\,\, L^2 (\, \mathrm{or} \, L^p, 1 \le p < \infty ), \\
\norm{\mu_0 ^\delta}_{X^r} \le C \norm{\mu_0}_{X^r}
\end{gathered} 
\end{equation} \label{approxid}
\end{lemma}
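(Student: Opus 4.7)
The plan is to reduce every claim to explicit computations on moments, exploiting the fact that Gaussian convolution interacts cleanly with polynomials. Writing $\mu_0^\delta = g_\delta *_x (g_\delta *_m \mu_0)$, I would first expand the $m$-mollification in terms of moments of $\mu_0$: by Fubini, for each multi-index $(a,b)$,
\begin{equation*}
M_{a,b}[g_\delta *_m \mu_0](y) = \sum_{0 \le j \le a,\, 0 \le k \le b} \binom{a}{j}\binom{b}{k} \delta^{j+k} \mathbb{E}[W_1^j]\,\mathbb{E}[W_2^k]\, M_{a-j, b-k}[\mu_0](y),
\end{equation*}
where $W_1, W_2$ are independent standard Gaussians; the $(j,k)=(0,0)$ coefficient equals $1$ and every other coefficient is $O(\delta)$. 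Applying $g_\delta *_x$ then expresses $M_{a,b}[\mu_0^\delta]$ as a linear combination of $x$-mollifications of $M_{a',b'}[\mu_0]$ with $a'+b' \le a+b$, so standard mollifier convergence in $W^{k,2}$ immediately yields the first claim.

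For $\bar{M}_p[\mu_0^\delta]$, the even-$p$ case reduces to the previous polynomial computation via $|m|^p = (m_1^2 + m_2^2)^{p/2}$. For odd $p$, a mean value estimate
\begin{equation*}
\bigl| |m' + \delta W|^p - |m'|^p \bigr| \le C_p \delta \bigl( |m'|^{p-1} + \delta^{p-1} |W|^p \bigr)
\end{equation*}
shows, after taking expectation in $W$ and integrating against $\mu_0(y;dm')$, that $h_p^\delta(y) := \int |m|^p (g_\delta *_m \mu_0)(y;dm)$ satisfies $|h_p^\delta(y) - \bar{M}_p[\mu_0](y)| \le C_p \delta \bar{M}_{p-1}[\mu_0](y) + C_p \delta^p \bar{M}_0[\mu_0](y)$ pointwise. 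This vanishes in $L^2$ (or any $L^q$ containing the relevant moments), and Young's inequality for the outer $x$-convolution then gives $\bar{M}_p[\mu_0^\delta] = g_\delta *_x h_p^\delta \to \bar{M}_p[\mu_0]$ in the same norm.

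For the uniform bound, I would use $|m' + \delta W|^p \le (|m'| + \delta|W|)^p$ together with the binomial theorem and Young's inequality to obtain
\begin{equation*}
\|\bar{M}_p[\mu_0^\delta]\|_{L^2} \le \sum_{k=0}^p \binom{p}{k} \delta^k \mathbb{E}[|W|^k]\, \|\bar{M}_{p-k}[\mu_0]\|_{L^2}.
\end{equation*}
Multiplying by $r^p/p!$ and summing in $p$, the double sum factors as $\left(\sum_{k=0}^\infty (r\delta)^k \mathbb{E}[|W|^k]/k!\right) F[\mu_0](r)$, and the prefactor is finite and bounded uniformly for $\delta$ in a bounded range, since $\mathbb{E}[|W|^k]$ grows only like $k^{k/2}$.

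Finally, to obtain the pointwise weak convergence, I would extract from the $L^2$ convergence of the countable family $\{M_{a,b}[\mu_0^\delta]\}_{(a,b)}$ a diagonal subsequence $\delta_n \to 0$ along which every moment converges pointwise in $x$ on a single full-measure set. Lemma \ref{pointwiseCar} guarantees that on this set $\mu_0(x)$ satisfies the multivariate Carleman condition and is therefore determined by its moments, so Theorem \ref{weakconvergence} delivers weak convergence $\mu_0^{\delta_n}(x; \cdot) \rightharpoonup \mu_0(x; \cdot)$. The main obstacle is precisely this last step: pointwise a.e. convergence of $\mu_0^\delta$ itself requires passing to a diagonal subsequence, so the weak convergence statement must be read along such a subsequence extracted from any given sequence $\delta \to 0$.
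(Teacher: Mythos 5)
Your proposal is correct and follows essentially the same route as the paper: expand moments of the Gaussian convolution by the binomial theorem (this is exactly the paper's convolution estimates (\ref{convolutionestimate1})--(\ref{convolutionestimate2})), use domination and Young's inequality to pass to $L^2$ (or $W^{k,2}$) convergence after the outer $x$-mollification, factor the double sum to obtain $F[\mu_0^\delta](r)\le C(\delta r)\,F[\mu_0](r)$, and then use Lemma \ref{pointwiseCar} together with Theorem \ref{weakconvergence} for the pointwise weak convergence, which, as you correctly flag, the paper also only asserts along a subsequence. Your mean-value estimate for the odd radial moments is a slightly more explicit way of reaching the convergence the paper obtains by domination plus a.e.\ convergence, but it is not a different method.
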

\begin{proof}
We begin with $g_\delta *_m \mu_0 $. We first show that $g_\delta *_m \mu_0 \in X^r$. We have the following basic but frequently used estimate for convolution of moments:
\begin{equation}
\begin{gathered}
M_{a,b} [g_\delta *_m \mu_0] \\
= \sum_{p=0} ^a \sum_{q=0 } ^b \left ( \begin{matrix} a \\ p \end{matrix} \right ) \left ( \begin{matrix} b \\ q\end{matrix} \right ) \int (m_1 - n_1) ^p (m_2 - n_2 )^q g_\delta (m-n) dm n_1 ^{a-p} n_2 ^{b-q} \mu_0 (dn) \\
= \sum_{p=0} ^a \sum_{q=0 } ^b \left ( \begin{matrix} a \\ p \end{matrix} \right ) \left ( \begin{matrix} b \\ q\end{matrix} \right ) M_{p,q} [g_\delta (m) ] M_{a-p, b-q} [\mu_0]
\end{gathered} \label{convolutionestimate1}
\end{equation}
and
\begin{equation}
\begin{gathered}
\bar{M}_k [ g_\delta *_m \mu_0 ] \le \sum_{p=0} ^k \left ( \begin{matrix} k \\ p \end{matrix} \right ) \int |m-n|^p |n|^{k-p} g_\delta (m-n) dm |\mu_0 | (dn) \\
\le \sum_{p=0} ^k \left ( \begin{matrix} k \\ p \end{matrix} \right ) \delta ^p 2^{\frac{p}{2}} \Gamma \left ( \frac{p+2}{2} \right ) \bar{M}_{k-p} [\mu_0].
\end{gathered} \label{convolutionestimate2}
\end{equation}
Therefore, we have
\begin{equation}
F[g_{\delta} *_m \mu_0 ] (r) \le C F[\mu_0] (r)
\end{equation}
where 
\begin{equation}
C = \sum_{p=0 }^\infty \frac{1}{p!}  \Gamma \left ( \frac{p+2}{2} \right ) \left ( \delta 2^{\frac{1}{2}} r \right )^p \le C e^{C (\delta r)^2 }.
\end{equation}
So we have
\begin{equation}
\norm{g_\delta *_m \mu_0 }_{X^r} \le C e^{C(\delta r)^2 } \norm{\mu_0}_{X^r}.
\end{equation}
Also (\ref{convolutionestimate1}) and (\ref{convolutionestimate2}) shows that $M_{a,b} [g_\delta *_m \mu_0] (x)$ and $\bar{M}_k [g_\delta *_m \mu_0 ] (x ) $ are dominated by a $L^2$ function, and $M_{a,b} [ g_\delta *_m \mu_0 ] (x)$ converges to $M_{a,b} [\mu_0]$ in $L^2$ and also almost everywhere, and $\bar{M}_p [g_\delta *_m \mu_0 ]$ converges to $\bar{M}_p [\mu_0]$ in $L^2$ (or other $L^p$, $p< \infty$) and almost everywhere, as $\delta \rightarrow 0$. Therefore, by Theorem \ref{weakconvergence} we note that for almost $x$ $g_\delta *_m \mu_0 (x)$ converges to $\mu_0 (x )$ weakly in a subsequence. Also by (\ref{convolutionestimate1}) if all $M_{a', b'} [\mu_0] \in W^{k,2}$ for $a' + b' \le a+b$ then $M_{a,b} [g_\delta *_m \mu_0 ] \rightarrow M_{a,b} [\mu_0]$ in $W^{k,2}$. Since $\mu_0 \ge 0$, we have $M_{a,b} [\mu_0 ^\delta] = g_{\delta} *_x M_{a,b} [g_{\delta} *_m \mu_0 ]$ and $\bar{M}_k [\mu_0 ^\delta] = g_\delta *_x \bar{M}_k [g_\delta *_m \mu_0 ]$. Since convolution with $g_\delta$ is an approximate identity, all the conclusions of the lemma holds.
\end{proof}
Also we can prove the following:
\begin{lemma}
Let $\mu \in X^r$ is given by a smooth density $\mu = \mu(x, m) dm$. If $\bar{M}_{p} [\nabla_x ^k \mu] \in L^2$ for some nonnegative integer $p$, then for all $a, b \ge 0$ with $a+b = p$ we have $\nabla_x ^k M_{a,b} [\mu ] = M_{a,b} [\nabla_x ^k \mu ] \in L^2 (\mathbb{R}^2)$ and 
\begin{equation}
\norm{\nabla_x ^k M_{a,b} [\mu]}_{L^2 } \le \norm{\bar{M}_{p} [\nabla_x ^k \mu ]}_{L^2 }.
\end{equation}
Especially, if $\mu \in X^{k,r}$ then $M_{a,b} [\mu] \in W^{k,2}$ for all $a, b \ge 0$. Also, if $\mu(t) \in C^1 ([0, T], X^r )$ is a continuously differentiable family, and $\mu(t) = \mu(x, m, t) dm$ is given by the smooth density functions, then $\partial_t M_{a,b} [\mu] (t) = M_{a,b} [\partial_t \mu ] \in L^2$. \label{momentSobolev}
\end{lemma}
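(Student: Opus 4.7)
The plan is to reduce everything to a single pointwise bound, then leverage smoothness of the density to identify weak derivatives of moments with moments of derivatives.

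First I would establish the $L^2$ bound. Since $a + b = p$, the pointwise inequality $|m_1^a m_2^b| \le |m|^p$ gives
\begin{equation*}
|M_{a,b}[\nabla_x^k \mu](x)| \le \int_{\mathbb{R}^2} |m|^p |\nabla_x^k \mu(x, m)| dm = \bar{M}_p[\nabla_x^k \mu](x)
\end{equation*}
for every $x$ (using that $\nabla_x^k \mu$ has smooth density in $m$, so the total variation measure is $|\nabla_x^k \mu(x,m)| dm$). Taking $L^2$ norms yields $\norm{M_{a,b}[\nabla_x^k \mu]}_{L^2} \le \norm{\bar{M}_p[\nabla_x^k \mu]}_{L^2}$, which is the quantitative bound claimed.

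Next I would identify $\nabla_x^k M_{a,b}[\mu]$ with $M_{a,b}[\nabla_x^k \mu]$ in the sense of distributions. For any test function $\phi \in C_c^\infty(\mathbb{R}^2)$, I would unfold the pairing
\begin{equation*}
\langle \nabla_x^k M_{a,b}[\mu], \phi \rangle = (-1)^k \int_{\mathbb{R}^2} \nabla_x^k \phi(x) \int_{\mathbb{R}^2} m_1^a m_2^b \mu(x,m) dm dx,
\end{equation*}
apply Fubini (justified on the compact support of $\nabla_x^k \phi$ by $\bar{M}_p[\mu] \in L^2 \subset L^1_{\mathrm{loc}}$), integrate by parts pointwise in $x$ for each fixed $m$ (legitimate since $\mu(x,m)$ is smooth in $x$ and $\phi$ is compactly supported), and then apply Fubini once more using $\bar{M}_p[\nabla_x^k \mu] \in L^2$. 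The result is $\int \phi \cdot M_{a,b}[\nabla_x^k \mu] dx$, so the weak $k$-th derivative of $M_{a,b}[\mu]$ coincides with $M_{a,b}[\nabla_x^k \mu]$, which belongs to $L^2$ by the first step.

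The ``especially'' part is then immediate: if $\mu \in X^{k,r}$, then each $\nabla_x^j \mu$ belongs to $X^r$ for $j \le k$, so $\bar{M}_p[\nabla_x^j \mu] \in L^2$ for every $p$ and every $j \le k$; applying the previous two steps at each derivative order gives $\nabla_x^j M_{a,b}[\mu] \in L^2$ for all $j \le k$, hence $M_{a,b}[\mu] \in W^{k,2}$. For the time-derivative statement, the hypothesis that $t \mapsto \mu(t)$ is $C^1$ into $X^r$ means the $X^r$-difference quotient converges to $\partial_t \mu(t)$; applying the pointwise bound of the first step to $\nu_h = h^{-1}(\mu(t+h) - \mu(t)) - \partial_t \mu(t)$ shows that the $L^2$ difference quotient of $M_{a,b}[\mu]$ converges to $M_{a,b}[\partial_t \mu(t)]$ in $L^2$, giving the claimed $L^2$-differentiability. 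I expect the argument to be essentially routine; the only place requiring care is the pair of Fubini swaps, but the $X^r$-hypothesis is precisely engineered to supply the integrability these swaps demand.
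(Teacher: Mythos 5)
Your argument is correct, but it takes a genuinely different route from the paper's. Both proofs open with the same pointwise observation $\left|M_{a,b}[\nu]\right| \le \bar{M}_{a+b}[\nu]$, which supplies the quantitative $L^2$ bound and underwrites the integrability needed for everything else. From there, the paper works with difference quotients directly: it uses Taylor's theorem and Fubini to write
\begin{equation*}
\frac{M_{a,b}[\mu](x+he_i) - M_{a,b}[\mu](x)}{h} = \int_0^1 M_{a,b}[\partial_{x_i}\mu](x+hse_i)\,ds ,
\end{equation*}
then invokes continuity of translations in $L^2(\mathbb{R}^2)$ together with Fatou's lemma to pass to the limit $h\to 0$ and conclude that the derivative is $M_{a,b}[\partial_{x_i}\mu]$ (iterating for higher $k$). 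You instead identify the \emph{distributional} derivative $\nabla_x^k M_{a,b}[\mu]$ by pairing against a test function, exchanging the order of integration twice (both Fubini swaps being licensed exactly by $\bar{M}_p[\mu],\,\bar{M}_p[\nabla_x^k\mu] \in L^2 \subset L^1_{\mathrm{loc}}$), and integrating by parts pointwise in $x$ for fixed $m$. Your route is a bit cleaner: it does not require the continuity-of-translations step or Fatou, and it produces the weak derivative in one shot, which is precisely what $W^{k,2}$ membership asks for. The paper's route, in exchange, exhibits the derivative as an honest $L^2$-limit of difference quotients. For the time derivative, your argument --- applying the pointwise bound to the difference-quotient remainder $\nu_h = h^{-1}(\mu(t+h)-\mu(t)) - \partial_t\mu(t)$, whose $X^r$-norm vanishes by the $C^1$ hypothesis --- makes concrete what the paper dispatches with ``the second assertion can be proven in the same way.'' Both approaches are valid and deliver the same conclusion; yours is modestly more streamlined.
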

\begin{proof}
We prove only the first assertion; the second assertion can be proven in the same way. First, note that $\left | M_{a,b} [\nabla_x ^k  \mu ] \right | \le \bar{M}_{a+b} [\nabla_x ^k \mu ]$, so $M_{a,b} [\nabla_x ^k \mu] \in L^2$ for $a+b = p$. Then we have
\begin{equation}
\begin{gathered}
\partial_{x_i} M_{a,b} [\mu] (x) - M_{a,b} [\partial_{x_i} \mu ] (x) = \lim_{h \rightarrow 0 } \int_{\mathbb{R}^2 _m} m_1 ^a m_2 ^b \int_0 ^1 \partial_{x_i} \mu (x+hse_i, m) ds dm - M_{a,b} [\partial_{x_i} \mu] (x)  \\
= \lim_{h \rightarrow 0} \int_0 ^1 \left (M_{a,b} [\partial_{x_i} \mu] (x+ hse_i) - M_{a,b} [\partial_{x_i} \mu ] (x) \right ) ds
\end{gathered}
\end{equation}
by Taylor expansion and Fubini's theorem. On the other hand, since translation in space is continuous in $L^2(\mathbb{R}^2, dx)$ we have
\begin{equation}
\lim_{h \rightarrow 0} \norm{\int_0 ^1 \left (M_{a,b} [\partial_{x_i} \mu] (x+ hse_i) - M_{a,b} [\partial_{x_i} \mu ] (x) \right ) ds}_{L^2 (\mathbb{R}^2, dx ) } = 0
\end{equation}
and by Fatou we are done.
\end{proof}
\begin{remark}
We conclude this section with the remark showing that the growth of moments condition is a mild constraint to polymer distributions. We consider the following probability distribution $$ f(m, x) = \exp \left ( - \left (\frac{|m|^2}{c(x)} \right )^q \right ),$$ where $c(x)>0 $ is a parameter representing the degree of stretch of polymer at position $x$. For example, when $M_{0,0} = 1$ and $q = 1$, this corresponds to the case $\sigma = 2 c(x) \mathbb{I}$. Suppose that $c \in W^{1,2} (\mathbb{R}^2 )$. We can show that for some $0 < r < C {\norm{\nabla_x c}_{L^2} ^{ - \frac{1}{2} } } $, $f \in X^r$. First, by a direct calculation we obtain $$ \bar{M}_{2r} [f] (x) = 2 \pi \Gamma \left ( \frac{r+1}{q} \right ) |c(x) |^{r+1}, $$ and by Gagliardo-Nirenberg inequality (\cite{MR2759829}) we have
\begin{equation}
\norm{c}_{L^{2(r+1)}} ^{r+1} \le (r+1) ! \norm{\nabla_x c}_{L^2} ^{r+1}.
\end{equation}
Therefore,
\begin{equation}
\frac{\norm{\bar{M}_{2r} [f] }_{L^2} }{(2r)!} z^{2r} \le 2 \pi \norm{\nabla_x c}_{L^2} \frac{\Gamma \left ( \frac{r+1}{q} \right ) (r+1) ! }{(2r)!} \left ( \norm{\nabla_x c}_{L^2} ^{\frac{1}{2} } z \right )^{2r},
\end{equation}
as desired.
\end{remark}
\begin{remark}
Another example is the following:
\begin{equation}
f(m,x) = c(x) \frac{1}{Z} e^{-|m|}
\end{equation}
where $\int_{\mathbb{R}^2 _m } \frac{1}{Z} e^{-|m|} dm = 1$ and $c(x) \ge 0 \in L^1 \cap L^2$. Then for each $k$
\begin{equation}
\bar{M}_k [f] (x) = \int_{\mathbb{R}^2 _m } c(x) |m|^k \frac{1}{Z} e^{-|m|} dm = c(x) \frac{2\pi}{Z} \int_0 ^\infty r^{k+1} e^{-r} dr = \frac{2 \pi (k+1)!}{Z} c(x)
\end{equation}
and therefore we have
\begin{equation}
\norm{f}_{X^r} = \sum_{k=0} ^\infty \frac{2 \pi (k+1) \norm{c}_{L^2} }{Z } r^k < \infty 
\end{equation}
for $0 < r < 1$.
\end{remark}

\subsection{Moment solution and its properties} \label{Momsolprop}
Here we define the notion of moment solution and investigate its properties.
\begin{definition}[Moment solution]
Let $\mu = \mu(x, t; dm) \in L_{loc} ^1 ( [0, T] \times \mathbb{R}^2, \mathcal{M} (\mathbb{R}^2) )$ and $u \in L^\infty (0, T; L^\infty)$ with $\nabla_x u \in L^2 (0, T; L^\infty )$ be a given divergence free field. We say $\mu$ is a moment solution of the Fokker-Planck equation with velocity field $u$ if the following holds:
\begin{enumerate}
\item $\mu$ is a solution to the Cauchy problem 
\begin{equation}
\partial_t \mu =   \epsilon \Delta_m \mu + \nu_2 \Delta_x \mu - \nabla_x \cdot \left ( u (t) \mu \right ) - \nabla_m \cdot \left ( \left ( \nabla_x u (t) m - \nabla_m U \right ) \mu \right ) \label{FPwithu}
\end{equation}
with $\mu|_{t=0} = \mu_0$, 
\item $\mu = \mu(x, t; dm) dx dt$ is nonnegative measures for almost all $x, t$, and for almost all $t\in (0, T)$ 
\begin{equation}
\int_{\mathbb{R}^2_x} \int_{\mathbb{R}^2_m} \mu(x, t;dm) dx  \le \int_{\mathbb{R}^2_x} \int_{\mathbb{R}^2_m} \mu_0 (x;dm) dx
\end{equation}
and
\begin{equation}
\int_{\mathbb{R}^2_m} \mathrm{Tr} \left ( m \otimes \nabla_m U \right ) \mu(x, t; dm) \in  L^\infty (0, T; L^1_x )
\end{equation}
holds;
\item There is a nonincreasing, positive function $r : [0, T] \rightarrow \mathbb{R}_+$, $r(0) = r<\infty $ such that
\begin{equation}
\norm{\mu (t)}_{X^{r(t)} } \le C(r, T, \norm{u}) \norm{\mu (0) }_{X^r}
\end{equation}
holds; and
\item For all $a, b \ge 0$ $M_{a,b} [\mu] (x, t) \in L^2 (0, T; W^{1,2} )$ and $\partial_t M_{a,b} [\mu] (x, t) \in L^2 (0, T; W^{-1, 2} )$.
\end{enumerate}
\end{definition}
We see that for all $a, b \ge 0$ we have in fact $M_{a,b} \in C([0, T] ; L^2 )$. Also we see that moments of moment solutions are weak solutions for formal evolution equation of moments:
\begin{lemma}
Let $\mu \in L_{loc} ^1 ([0, T] \times \mathbb{R}^2, \mathcal{M} (\mathbb{R}^2 ) )$ be a moment solution of the Fokker-Planck equation. Then for all $a, b \ge 0$, $M_{a,b} [\mu] = M_{a,b}$ are weak solutions of the evolution equation
\begin{equation}
\begin{gathered}
\partial_t M_{a,b} + u \cdot \nabla_x M_{a,b} - \nu_2 \Delta_x M_{a,b} = - 2 q \epsilon (a+b) M_{a,b} [|m|^{2(q-1)} \mu] \\
+ \epsilon \left ( a (a-1) M_{a-2,b} + b(b-1) M_{a,b-2} \right ) + a \partial_1 u_1 M_{a,b} + a \partial_1 u_2 M_{a-1, b+1} + b\partial_2 u_1 M_{a+1, b-1} + b \partial_2 u_2 M_{a,b},
\end{gathered} \label{momentevolution}
\end{equation}
that is, for any $\Phi \in L^2 (0, T; W^{1,2} )$ with $\Phi (T) = 0$ with $\partial_t \Phi \in L^2 (0, T; W^{-1,2} )$, we have
\begin{equation}
\begin{gathered}
\int_0 ^T \langle \partial_t M_{a,b}, \Phi \rangle_{W^{-1,2}, W^{1,2}} dt + \int_0 ^T \langle u \cdot \nabla_x M_{a,b} , \Phi \rangle_{L^2, L^2 } + \nu_2 \int_0 ^T \langle \nabla_x M_{a,b}, \nabla_x \Phi \rangle_{L^2, L^2 } = \int_0 ^T \langle R, \Phi \rangle_{L^2, L^2}
\end{gathered} \label{energymoments}
\end{equation} 
where $R$ is all the terms in the right side of (\ref{momentevolution}).
\label{momentofmomentsols}
\end{lemma}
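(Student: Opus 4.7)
The plan is to apply the weak Cauchy--problem formulation (\ref{weaksolution}) to $\mu$ with cut-off multipliers $\phi_R(x,m,t)=m_1^a m_2^b\,\chi_R(m)\,\Phi(x,t)$, where $\chi_R(m)=\chi(m/R)$ is a standard smooth cutoff at scale $R$, and then to remove the cutoff by sending $R\to\infty$, using the moment bounds built into the $X^{r(t)}$ norm to kill the boundary error.

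First I would reduce to the case $\Phi\in C_c^\infty(\mathbb{R}^2_x\times(0,T))$ by density, since $M_{a,b}\in L^2(0,T;W^{1,2})$ and $\partial_t M_{a,b}\in L^2(0,T;W^{-1,2})$ from the moment solution definition make both sides of (\ref{energymoments}) continuous in $\Phi$ over its stated class. Plugging $\phi_R$ into (\ref{weaksolution}) and expanding $\partial_t\phi_R+L_{A,b}\phi_R$, I split the result into a principal part $P_R$, in which every derivative falls on $m_1^a m_2^b$ or on $\Phi$, and an error part $E_R$, in which at least one derivative falls on $\chi_R$. The principal part, after integration in $m$ against $\mu$ and the limit $R\to\infty$ via dominated convergence (with $\bar M_k[\mu]\in L^\infty_t L^2_x$ as majorant), produces every term of (\ref{momentevolution}) tested against $\Phi$: the action of $\epsilon\Delta_m$ on $m_1^a m_2^b$ contributes $\epsilon(a(a-1)M_{a-2,b}+b(b-1)M_{a,b-2})$; the drift $(\nabla_x u)m\cdot\nabla_m(m_1^a m_2^b)$ generates the four $\nabla_x u$-coupled moments through $\nabla_m(m_1^a m_2^b)=(a m_1^{a-1}m_2^b,\,b m_1^a m_2^{b-1})$; and the potential piece, using $\nabla_m U=2q|m|^{2(q-1)}m$, yields $-2q\epsilon(a+b)M_{a,b}[|m|^{2(q-1)}\mu]$. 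An integration by parts in $t$ and $x$ on the surviving principal terms $M_{a,b}\partial_t\Phi$, $M_{a,b}\,u\cdot\nabla_x\Phi$, $M_{a,b}\,\nu_2\Delta_x\Phi$ (legitimate by the regularity of $M_{a,b}$ and by $\nabla_x\cdot u=0$) then delivers exactly (\ref{energymoments}).

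The heart of the argument is to show $E_R\to 0$. Every term in $E_R$ is supported in the shell $R\le|m|\le 2R$ and carries a prefactor $|\nabla_m\chi_R|\le C/R$ or $|\Delta_m\chi_R|\le C/R^2$. The dangerous term comes from $-\nabla_m U\cdot\nabla_m\chi_R$ paired with $m_1^a m_2^b$, pointwise dominated by $(C/R^2)|m|^{a+b+2q}\mathbf{1}_{|m|\ge R}$; the drift--boundary term is bounded by $(C/R^2)|m|^{a+b+2}\mathbf{1}_{|m|\ge R}\,\|\nabla_x u(t)\|_{L^\infty_x}$, and the Laplacian--boundary pieces by $(C\epsilon/R^2)|m|^{a+b}\mathbf{1}_{|m|\ge R}$. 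Because $\mu(t)\in X^{r(t)}$ uniformly in $t$, every moment $\bar M_k[\mu]$ lies in $L^\infty(0,T;L^2_x)$ for each $k$, so dominated convergence yields $\int_{|m|\ge R}|m|^k\mu(x,t;dm)\to 0$ in $L^2_x$ uniformly in $t$; combined with the hypotheses on $\Phi$ and on $\nabla_x u$, this forces $E_R\to 0$ in $L^1_{t,x}$ when tested against $\mu\Phi$. The expected main obstacle is precisely this passage to the limit through the potential term, which requires control on moments of order $a+b+2q$ rather than $a+b$: it is here that the analytic-type summability encoded in $\mu\in X^{r(t)}$ really pays off.
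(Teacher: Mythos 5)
Your proposal is correct and uses essentially the same approach as the paper's (brief) proof: test (\ref{weaksolution}) with $\phi=\Phi(x,t)\,m_1^a m_2^b\,\psi_\alpha(m)$ for a smooth radial cutoff $\psi_\alpha$, remove the cutoff by dominated convergence using the $X^{r(t)}$ moment bounds, integrate by parts in $t$ and $x$ to shift derivatives from $\Phi$ back onto $M_{a,b}$, and conclude by density of $C_c^\infty$ test functions in the stated class for $\Phi$. Your write-up simply supplies the boundary-term estimates that the paper leaves implicit.
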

\begin{proof}
In (\ref{weaksolution}), put our test functions in the form of
\begin{equation}
\phi = \phi_1 (x, t) m_1 ^a m_2 ^b \psi_\alpha (m)
\end{equation}
where $\psi$ is a smooth cutoff and $\psi_\alpha (m) = \psi (\frac{m}{\alpha})$. Then we apply dominated convergence, and then we apply integration by parts to $\partial_t \phi_1 M_{a,b}$ term and $\nu_2 \Delta_x \phi_1 M_{a,b}$ term. Then by density we are done. 
\end{proof}
Also moment solution is unique, given initial data.
\begin{lemma}
Suppose $\mu_1$ and $\mu_2$ are two moment solutions with same initial data. Then $\mu_1 = \mu_2$ in $L_{loc} ^1 ([0, T] \times \mathbb{R}^2 , \mathcal{M} (\mathbb{R}^2))$.
\label{uniqueness}
\end{lemma}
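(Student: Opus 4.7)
The plan is to reduce the statement to the uniqueness part of Theorem \ref{FPKeu}, applied on the joint phase space $\mathbb{R}^4 = \mathbb{R}^2_x \times \mathbb{R}^2_m$. Viewed this way, the Fokker-Planck equation (\ref{FPwithu}) is a Fokker-Planck-Kolmogorov equation of the form (\ref{FPK}) with constant, block-diagonal diffusion matrix $A = \mathrm{diag}(\nu_2 \mathbb{I}_2, \epsilon \mathbb{I}_2)$ and drift $b(x,m,t) = (u(x,t),\, (\nabla_x u(x,t))m - \nabla_m U(m))$. The ellipticity bound $\min(\nu_2,\epsilon)\mathbb{I}_4 \le A \le \max(\nu_2,\epsilon)\mathbb{I}_4$ and the Lipschitz condition on $a^{ij}$ are immediate because $A$ is constant, and on any ball $U \subset \mathbb{R}^4$ the drift is bounded in terms of $\|u\|_{L^\infty}$ and $\|\nabla_x u(t)\|_{L^\infty_x}$.

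Given two moment solutions $\mu_1, \mu_2$ with the same initial datum $\mu_0$, which after the normalization of the paper is a probability measure, I would verify that both satisfy conditions (\ref{subprob}) and (\ref{SP}) from Theorem \ref{FPKeu}, after which the uniqueness clause applied to $\mu_1$ forces $\mu_2 = \mu_1$. The subprobability bound (\ref{subprob}) is built directly into the definition of a moment solution. For (\ref{SP}), on any ball $U = \{|x|^2+|m|^2 \le R^2\}$ the drift is bounded by $C_R(1+\|\nabla_x u(t)\|_{L^\infty_x})$, hence
\begin{equation}
\int_0^T \int_U |b|^2 \, d\mu_1 \,dt \le C_R \int_0^T (1+\|\nabla_x u(t)\|_{L^\infty_x}^2)\,\mu_1(t;U)\,dt \le C_R T(1 + \|\nabla_x u\|_{L^2_t L^\infty_x}^2),
\end{equation}
using $\mu_1(t;\mathbb{R}^4) \le 1$.

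The step I expect to be the main obstacle is the global $L^1$ condition $|a^{ij}|+|b| \in L^1(\mu_1, \mathbb{R}^4 \times (0,T))$, since it requires controlling moments of $\mu_1$ integrated globally in $x$, rather than only on balls. The $|a^{ij}|$ part is trivial because $A$ is constant and $\mu_1$ is subprobability; $|u|$ is controlled by $\|u\|_{L^\infty}$ and total mass. For $|\nabla_m U(m)| = 2q|m|^{2q-1} \le 2q(1+|m|^{2q})$, the moment-solution requirement that $\int_m \mathrm{Tr}(m \otimes \nabla_m U)\mu_1 \in L^\infty(0,T;L^1_x)$ is exactly $\bar{M}_{2q}[\mu_1] \in L^\infty_t L^1_x$, which together with $M_{0,0}[\mu_1] \in L^\infty_t L^1_x$ yields $\int_{\mathbb{R}^4\times(0,T)} |\nabla_m U|\,d\mu_1\,dt < \infty$. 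The stretching term is handled by H\"older in time:
\begin{equation}
\int_0^T \int_{\mathbb{R}^2_x} \|\nabla_x u(t)\|_{L^\infty_x} \bar{M}_1[\mu_1](x,t)\,dx\,dt \le \|\nabla_x u\|_{L^2_t L^\infty_x} \|\bar{M}_1[\mu_1]\|_{L^2_t L^1_x},
\end{equation}
where $\bar{M}_1[\mu_1] \le M_{0,0}[\mu_1] + \bar{M}_{2q}[\mu_1]$ (since $q\ge 1$) gives $\bar{M}_1[\mu_1] \in L^\infty_t L^1_x \subset L^2_t L^1_x$. The same inequalities hold for $\mu_2$, so each satisfies (\ref{subprob}) and (\ref{SP}), and the uniqueness clause of Theorem \ref{FPKeu} applied to $\mu = \mu_1$ completes the argument. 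The delicate point is precisely the global integrability of the stretching term $(\nabla_x u)m\cdot\mu$, resolved by trading the $L^2_t L^\infty_x$ norm of $\nabla_x u$ against the $L^\infty_t L^1_x$ control of the first moment supplied directly by the definition of moment solution.
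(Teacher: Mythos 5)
Your proposal is correct and takes essentially the same route as the paper: both reduce the lemma to the uniqueness clause of Theorem \ref{FPKeu} on the joint phase space $\mathbb{R}^2_x \times \mathbb{R}^2_m$, verify (\ref{subprob}) from item 2 of the moment-solution definition, check (\ref{SP}) from the boundedness of the drift on balls together with $\nabla_x u \in L^2_t L^\infty_x$, and control the global $L^1(\mu)$ integrability of the stretching term by pairing the velocity regularity with the $L^\infty_t L^1_x$ bounds on $M_{0,0}$ and $\bar{M}_{2q}$. The only cosmetic difference is that you split $\|\nabla_x u\| |m|$ via H\"older in time plus $\bar{M}_1 \le M_{0,0} + \bar{M}_{2q}$, whereas the paper uses the pointwise Young inequality $|\nabla_x u\, m| \le |\nabla_x u|^2 + 1 + C|m|^{2q}$; both accomplish the same thing.
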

\begin{proof}
This is a immediate consequence of Theorem \ref{FPKeu}. By definition, $\mu$ is a solution to the Cauchy problem of (\ref{FPwithu}). Then we have
\begin{equation}
\begin{gathered}
u \in L^1 (0, T; \mu(x, t ; dm) dx dt ), \,\,  \nabla_m U \otimes m \in L^1 (0, T; \mu(x, t; dm) dx dt), \\
\left | \nabla_x u (t) m \right | \le |\nabla_x u (t) |^2 + 1 + C |m|^{2q} \in L^1 (0, T; \mu(x, t; dm) dxdt ).
\end{gathered}
\end{equation}
Condition (\ref{SP}) is obvious.
\end{proof}

\section{Solution scheme for Fokker-Planck equation} \label{Solsch}
The purpose of this section is to prove the following theorem.
\begin{theorem}
Given a fluid velocity field $u$ and initial data $ \mu_0 $ satisfying (\ref{velinitcond}), (\ref{initpositivity}), (\ref{initmoment}), (\ref{initstress}), and (\ref{initentropy}), there exists a unique moment solution to the Fokker-Planck equation (\ref{FPwithu}). Furthermore, it is given by nonnegative densities $\mu(x, t; dm) = f(x, t, m)$ and moments $M_{a,b} = M_{a,b} [\mu]$ satisfy bounds (\ref{XrtypeM}), (\ref{L2H1M}), (\ref{L2H1M2}),  (\ref{L2H2qM}), and (\ref{LinfL1M}). Furthermore, if the fluid velocity fields $u$ and $v$ satisfy (\ref{velinitcond}) and if we let $f$ and $g$ be solutions to the Fokker-Planck equation (\ref{FPwithu}) with velocity field $u$ and $v$, respectively, and if we let $\sigma_1$ and $\sigma_2$ be corresponding stress fields for $f$ and $g$ respectively, then they satisfy the estimate (\ref{differenceestimate}).
\label{momentsolutionexists}
\end{theorem}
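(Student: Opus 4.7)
The plan is to construct a unique moment solution via an approximation-plus-compactness argument and then use the linearity of (\ref{FPwithu}) in $f$ (with $u$ treated as data) to deduce the Lipschitz-type estimate on the stress. For the existence part, I would first regularize: mollify the initial datum via Lemma \ref{approxid} to obtain a smooth nonnegative density $\mu_0^\delta$ with $\norm{\mu_0^\delta}_{X^r}\le C\norm{\mu_0}_{X^r}$, and introduce a smooth cutoff $\psi_\alpha(m)$ that truncates the unbounded drift $\nabla_m U$ and the shear $(\nabla_x u)m$. The regularized equation is then a nondegenerate linear parabolic equation in $(x,m)\in\mathbb{R}^4$ with smooth bounded coefficients, so Theorem \ref{FPKeu} furnishes a unique smooth nonnegative classical solution $f^{\alpha,\delta}(x,t,m)$, whose nonnegativity is preserved from $\mu_0^\delta\ge 0$.

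Next I would establish $(\alpha,\delta)$-uniform bounds on the moments. Multiplying the regularized equation by $m_1^a m_2^b$ and integrating by parts in $m$ (legal thanks to the cutoff) yields (\ref{momentevolution}) for $M_{a,b}[f^{\alpha,\delta}]$: the transport/shear terms preserve the degree $a+b$, the $m$-diffusion couples down to degree $(a+b)-2$, and the drift from $\nabla_m U$ produces a favorable damping $-2q\epsilon(a+b)\bar{M}_{a+b+2(q-1)}$. Dividing by $(a+b)!$ and summing into the series $F[\,\cdot\,]^e(r(t))$ with a suitably nonincreasing radius $r(t)$ absorbs the index-shifting from the shear term, and Gronwall produces the $X^{r(t)}$ bound (\ref{XrtypeM}). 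Standard parabolic energy estimates on each $M_{a,b}$-equation using the $\nu_2\Delta_x$ dissipation and the coercive high-moment damping then yield (\ref{L2H1M})--(\ref{L2H2qM}) and (\ref{LinfL1M}), all uniform in $(\alpha,\delta)$.

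With these bounds and $\partial_t M_{a,b}\in L^2_t W^{-1,2}_x$ from (\ref{energymoments}), Aubin-Lions plus diagonal extraction furnish a subsequence along which every $M_{a,b}[f^{\alpha,\delta}]$ converges strongly in $L^2_{\mathrm{loc}}$ and pointwise a.e.\ to a limit $M_{a,b}(x,t)$. The uniform $X^{r(t)}$ control and Lemma \ref{pointwiseCar} ensure that $\{M_{a,b}(x,t)\}_{a,b}$ satisfies the multivariate Carleman condition pointwise and is positive semidefinite (as a pointwise limit of positive semidefinite sequences), so Theorem \ref{measureexist} realizes it as the moment sequence of a unique nonnegative Borel measure $\mu(x,t;dm)$, and Theorem \ref{weakconvergence} upgrades pointwise moment convergence to pointwise weak convergence of measures. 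Passing to the limit in (\ref{weaksolution}) on test functions of polynomial growth in $m$ is then routine via dominated convergence against the $X^r$ weights, so $\mu$ is a moment solution; uniqueness is already given by Lemma \ref{uniqueness}, and the density representation follows from the $L^r_{\mathrm{loc}}$ regularity part of Theorem \ref{FPKeu}.

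For the stability estimate, set $w=f-g$ and $\omega=u-v$; a direct subtraction gives a linear equation for $w$ with zero initial data and source
\begin{equation*}
-\omega\cdot\nabla_x g-(\nabla_x\omega)m\cdot\nabla_m g.
\end{equation*}
The first source is harmless at the moment level since its moments are $\omega\cdot\nabla_x M_{a,b}[g]\in L^2_{x,t}$ by (\ref{L2H1M}). The main obstacle, which is exactly the difficulty flagged in the theorem, is the second source: multiplying the $w$-equation by the degree-$2q$ weight $m\otimes\nabla_m U(m)$ to produce $\sigma_1-\sigma_2$ generates a contribution $\int m\otimes\nabla_m U\cdot(\nabla_x\omega)m\cdot\nabla_m g\,dm$ whose integration by parts in $m$ does not close, because the weight $|m|^{2q}$ is preserved under differentiation of $\nabla_m U$. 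This is precisely where the finite-entropy assumption (\ref{initentropy}) is used: by Theorem \ref{entropycon}, $|\nabla_m g|^2/g\in L^1_{x,t,m}$ uniformly, so Cauchy-Schwarz bounds the offending term by $\norm{\nabla_x\omega}_{L^\infty_x}$ times the square root of the entropy-production integral times a suitably weighted moment of $g$, both of which are controlled by the previous step. Gronwall in a scalar inequality for an $X^r$-type norm of $w$ then yields the estimate (\ref{differenceestimate}).
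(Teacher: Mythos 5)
Your overall architecture tracks the paper's: truncate the unbounded drifts and mollify the data, derive $\alpha$-uniform moment bounds, pass to the limit via Aubin--Lions plus the determinate moment problem (Lemma \ref{pointwiseCar}, Theorem \ref{weakconvergence}), obtain a density from the $L^r$ regularity of Theorem \ref{FPKeu}, use Lemma \ref{uniqueness} for uniqueness, and use the entropy-production bound of Theorem \ref{entropycon} to control the $\nabla_m g$ source in the stability estimate. But two steps, as written, are unsubstantiated.

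First, you invoke Theorem \ref{FPKeu} to produce a ``unique smooth nonnegative classical solution'' of the truncated problem. Theorem \ref{FPKeu} gives only a measure solution with density in $L^r_{\mathrm{loc}}$ for $r < (d+2)/(d+1)$; it yields no classical regularity. The paper instead constructs the smooth approximant by a contraction mapping on the Duhamel formula (\ref{truncatedsolution}) in $C^1([0,T]; W^{k,2}_{x,m}\cap X^{k,r})$, and then Sobolev embedding and the maximum principle give pointwise $C^2$ regularity and nonnegativity. Since the truncated coefficients are smooth and bounded, classical parabolic theory would also serve, but Theorem \ref{FPKeu} is not the right tool for this step.

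Second, and more substantively, ``Gronwall in a scalar inequality for an $X^r$-type norm of $w$'' does not close as stated. The $X^r$ norm involves \emph{absolute} moments $\bar M_{2k}[w]=\int|m|^{2k}|w|\,dm$, but inserting $w=f-g$ into the moment-evolution scheme (\ref{momentevolution}) only controls the \emph{signed} moments $M_{a,b}[w]$, which do not bound $\bar M_{2k}[w]$. Moreover, the $\sigma_1-\sigma_2$ equation sources moments of $w$ of degree up to $4q-2$ (from the $\epsilon\nabla_m\cdot(\nabla_m U\,f)$ term), so the degree-$2q$ level does not close by itself. The missing ingredient is the smoothed-sign-function device: multiply the difference equation by $|m|^{2k}\mathrm{sgn}_\beta(f^\alpha-g^\alpha)$, exploit that the $\Delta_x$ and $\Delta_m$ contributions are sign-favorable (Kato-type inequalities, since $\mathrm{sgn}'_\beta\ge 0$) and the restoring-force term is nonpositive, and send $\beta\to 0$ to obtain $\norm{\int|m|^{2k}|f-g|\,dm}_{L^2_x}\le C\norm{u-v}_{L^4(0,T;W^{1,\infty})}$ for all $k\le 2q-1$. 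Only then does a second energy estimate on the $\sigma_1-\sigma_2$ equation (bounding the source terms $I_1,I_2$ by the absolute-moment differences just obtained) yield (\ref{differenceestimate}). Your Cauchy--Schwarz observation against the entropy production is exactly right for the term you identified, but without the sign-function passage the estimate has no closed inequality to run Gronwall on.
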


\subsection{Approximate solutions} \label{Appsol}
Our goal is to find a moment solution for Fokker-Planck equation, given a fluid velocity field $u$. We establish such solution by setting up an approximation scheme. There are two main modifications in the sequence of approximate solutions: the first is to introduce smooth cutoff to the drift and potential, so that the coefficients remain finite. This modification enables us to employ integration by parts in $m$ variable rigorously, and we can investigate of the bounds on moments. The second is to mollify velocity field and initial data to guarantee higher regularities. Let $\Psi$ be a smooth, decreasing compactly supported function in the closed half-line $\{ r \ge 0 \}$, $0 \le \Psi \le 1$, with $\Psi \equiv 1$ for $r \le 1$ and $\Psi \equiv 0$ for $r \ge 2$. Then for $\alpha > 0$, we let $\psi_\alpha (m) = \Psi \left ( \frac{|m|}{\alpha} \right )$.
\begin{definition}
Suppose that 
\begin{equation}
\begin{gathered}
u \in L^\infty (0, T; \mathbb{P} W^{2,2} ) \cap L^2 (0, T; \mathbb{P}W ^{3,2} ), \\
\partial_t u \in L^\infty (0, T; \nabla_x L^1 + L^2 ) \cap L^2 (0, T; \mathbb{P} W^{1,2} ), 
\end{gathered} \label{velinitcond}
\end{equation}
\begin{equation}
\mu_0 \ge 0, \int \int \mu_0 (dm) dx = 1 \label{initpositivity}
\end{equation}
\begin{equation}
\mu_0 \in X^r,  \label{initmoment}
\end{equation}
\begin{equation}
M_{a,b} [\mu_0 ] \in W^{1,2} \,\, \mathrm{for} \,\, a+b = 2p \le 8q-2, \bar{M}_{4q} [\mu_0] \in L^1, \label{initstress}
\end{equation}
\begin{equation}
\begin{gathered}
\mu_0 = f_0 (x, m) dm dx, \\
\int_{\mathbb{R}^2_m \times \mathbb{R}^2_x } f_0 \log f_0 dmdx \in \mathbb{R}, \\
\int_{\mathbb{R}^2 _x } |\Lambda (x) |^2 M_{0,0} [f_0 ] (x) dx < \infty, \Lambda (x) = \log \left ( \max ( |x|, 1 ) \right ), \label{initentropy}
\end{gathered}
\end{equation}
be given. For $\alpha > 0$, a $\alpha$-truncated Fokker-Planck solution of the Cauchy problem of (\ref{FPwithu}) with $\mu|_{t=0} = \mu_0 $ is a function $f^{\alpha} \in C^1 ([0, T] ; W^{k,2}_x W^{k,2}_m \cap X^{k,r})$, $k = 20$, satisfying
\begin{equation}
\begin{gathered}
\partial_t f^{\alpha} + u^\alpha \cdot \nabla_x f^\alpha + \nabla_m \cdot \left ( (\nabla_x u^\alpha) m \psi_\alpha f^\alpha \right ) = \epsilon \Delta_m f^\alpha + \epsilon \nabla_m \cdot \left ( (\nabla_m U) \psi_\alpha f^\alpha \right ) + \nu_2 \Delta_x f^\alpha, \\
f^\alpha (x,m,0) = \mu_0 ^{\frac{1}{\alpha}} (x, m) =: f_0 ^\alpha
\end{gathered} \label{truncatedFP}
\end{equation}
pointwise where $u^\alpha = g_{\frac{1}{\alpha}} *_x u$.
\end{definition}
We first start with existence and uniqueness of such $\alpha$-truncated Fokker-Planck solution. First note that
\begin{equation}
f_0 ^\alpha \in W^{p,2}_x W^{p,2}_m \cap X^{p,r} \cap W_{x} ^{p,1} W_{m,1} ^{p,1}, \,\, M_{0,0} [f^\alpha_0], \bar{M}_{2q} [f_0 ^\alpha] , \bar{M}_{4q} [f_0 ^\alpha ] \in W_x ^{p,1}
\end{equation}
for any $p \ge 0$: this is because $\nabla_x ^a \nabla_m ^b f_0 ^\alpha = (\nabla_x ^a g_{\frac{1}{\alpha}}) *_x (\nabla_m ^b g_{\frac{1}{\alpha}} ) *_m \mu_0$ so we can apply the same argument in Lemma \ref{approxid} to conclude that $\nabla_x ^a \nabla_m ^b f_0 ^\alpha \in X^r$, and using Young's inequality for measure 
\begin{equation}
\norm{h *_m \mu_0 }_{L^2 (\mathbb{R}^2; dm)} \le \norm{h}_{L^2} \norm{\mu_0}
\end{equation}
we see that $\nabla_x ^a \nabla_m ^b f_0 ^\alpha \in L^2 _x L^2 _m$. Also note that for all $p\ge 0$
\begin{equation}
\begin{gathered}
u^\alpha \in L^\infty (0, T; \mathbb{P} W^{p, 2} ), \partial_t u^\alpha \in L^\infty (0, T; \mathbb{P} W^{p,2} ), \\
u^\alpha \rightarrow u \,\mathrm{in}\,\, L^\infty (0, T; \mathbb{P} W^{2,2} ) \cap L^2 (0, T; \mathbb{P} W^{3,2} ).
\end{gathered}
\end{equation}
The equation (\ref{truncatedFP}) has a solution map:
\begin{equation}
\begin{gathered}
f^{\alpha} (t) = e^{t ( \epsilon \Delta_m + \nu_2 \Delta_x )} f_0 ^\alpha - \int_0 ^t \nabla_x (e^{\tau (\epsilon \Delta_m + \nu_2 x ) } )\cdot  u^\alpha (t-\tau) f^{\alpha} (t-\tau) d\tau \\
- \int_0 ^t \nabla_m (e^{\tau (\epsilon \Delta_m + \nu_2 x ) } ) \cdot (\nabla_x u^\alpha (t-\tau) m \psi_\alpha f^\alpha (t-\tau) ) d\tau + \int_0 ^t \epsilon \nabla_m (e^{\tau (\epsilon \Delta_m + \nu_2 x ) } ) \cdot ( f^\alpha (t-\tau) \nabla_m U \psi_\alpha ) d\tau.
\end{gathered} \label{truncatedsolution}
\end{equation}
Then we have
\begin{equation}
\begin{gathered}
\nabla_x ^p \nabla_m ^q f^{\alpha} (t) = e^{t (\epsilon \Delta_m + \nu_2 x ) } \nabla_x ^p \nabla_m ^q f_0 ^\alpha \\
- \int_0 ^t \nabla_x (e^{\tau (\epsilon \Delta_m + \nu_2 x ) } ) \sum_{p'} \left ( \begin{matrix} p \\ p' \end{matrix} \right ) \nabla_x^{p'}  u^\alpha (t-\tau) \nabla_x ^{p-p'} \nabla_m ^q f^{\alpha} (t-\tau) d\tau \\
- \int_0 ^t \nabla_m (e^{\tau (\epsilon \Delta_m + \nu_2 x ) } ) \cdot \sum_{p', q'} \left ( \begin{matrix} p \\ p' \end{matrix} \right ) \left ( \begin{matrix} q \\ q' \end{matrix} \right )  \nabla_x \nabla_x ^{p'}  u^{\alpha} (t-\tau) \nabla_m ^{q'} (m \psi_\alpha ) \nabla_x ^{p-p'} \nabla_m ^{q - q'} f^{\alpha} (t-\tau) d\tau \\
+ \int_0 ^t \nabla_m (e^{\tau (\epsilon \Delta_m + \nu_2 x ) } ) \cdot \sum_{q'}  \left ( \begin{matrix} q \\ q' \end{matrix} \right ) \nabla_m ^{q'} (\nabla_m U \psi_\alpha ) \nabla_x ^p \nabla_m ^{q - q'} f^{\alpha} (t-\tau) d\tau
\end{gathered} \label{truncatedsolution2}
\end{equation}
and
\begin{equation}
\begin{gathered}
\partial_t \nabla_x ^p \nabla_m ^q f^{\alpha} (t) = e^{t (\epsilon \Delta_m + \nu_2 x ) }  ( \epsilon \Delta_m + \nu_2 \Delta_x ) \nabla_x ^p \nabla_m ^q f_0 ^{\alpha} - e^{t (\epsilon \Delta_m + \nu_2 x ) } \nabla_x \cdot \nabla_x ^p (u^{\alpha} (0) \nabla_m ^q f_0 ^\alpha ) \\
- e^{t (\epsilon \Delta_m + \nu_2 x ) }  \nabla_x ^p (\nabla_x u^\alpha (0) \nabla_m \nabla_m ^q (m \psi_\alpha f_0 ^\alpha ) ) + \epsilon e^{t (\epsilon \Delta_m + \nu_2 x ) } \cdot \nabla_m \nabla_m ^q (\nabla_x ^p f_0 ^\alpha \nabla_m U \psi_\alpha ) \\
-\int_0 ^t \nabla_x ( e^{\tau (\epsilon \Delta_m + \nu_2 x ) } ) \cdot \nabla_x ^p (\partial_t u^\alpha \nabla_m ^q f^{\alpha} + u^\alpha \partial_t \nabla_m ^q f^{\alpha} )(t-\tau) d\tau \\
- \int_0 ^t \nabla_m (e^{\tau (\epsilon \Delta_m + \nu_2 x ) } ) \cdot \nabla_x ^p ( \partial_t \nabla_x u^\alpha (t-\tau) \nabla_m ^q (m \psi_\alpha f^\alpha (t-\tau) )  + \nabla_x u^\alpha  (t-\tau) \nabla_m ^q (m \psi_\alpha \partial_t f^\alpha (t-\tau)  ) ) d\tau \\
 + \epsilon \int_0 ^t \nabla_m (e^{\tau (\epsilon \Delta_m + \nu_2 x ) } ) \cdot \nabla_m ^q (\partial_t \nabla_x ^p f^{\alpha} (t-\tau) \nabla_m U \psi_\alpha ) d\tau.
\end{gathered}\label{truncatedsolution3}
\end{equation}
From this we conclude that the solution map (\ref{truncatedsolution}) is a contraction mapping in the complete metric space 
\begin{equation}
\{ f \in C^1 ([0, T], W_x ^{k,2} W_m ^{k,2} \cap X^{k,r}) \, : \, f(0) = f_0 ^\alpha \}
\end{equation}
since all the terms in (\ref{truncatedsolution}), (\ref{truncatedsolution2}), (\ref{truncatedsolution3}) are either of the form
\begin{equation}
e^{t (\epsilon \Delta_m + \nu_2 x ) } A \nabla_x ^{p'} \nabla_m ^{q'} f_0 ^\alpha
\end{equation}
where $A$ is $1$ or $\nabla_x ^{r'} u(0)$ and $p'$, $q'$ are derivatives higher than at most 2 degrees to the left hand side term that it occurs, or
\begin{equation}
\int_0 ^t \nabla_{x,m} ( e^{\tau (\epsilon \Delta_m + \nu_2 x ) } ) A \partial_t ^{r'} \partial_x ^{p'} \partial_m ^{q'} f^\alpha (t-\tau) d\tau
\end{equation} 
where $A$ is of the form of some constant, $\nabla_m ^{l'} (\nabla_m U \psi_\alpha )$, or $\nabla_x^{k'} \partial_t ^{i'} u^\alpha (t-\tau)$, and $\partial_t ^{r'} \partial_x ^{p'} \partial_m ^{q'} f^\alpha$ are terms with derivatives lower than or equal to the left hand side term that it occurs. The terms we denoted by $A$ are innocent, because $\norm{A}_{L^\infty (0, T; L^\infty _x ) } \le C(\alpha) <\infty$. Therefore, the $W^{k,2}_{x,m} \cap X^{k,r} $ norm of first term can be bounded by $C \norm{f_0 ^\alpha}_{W^{k+2,2}_{x,m} \cap X^{k+2,r} }$, which is finite, and the $W^{k,2}_{x,m} \cap X^{k,r} $ of the second term can be bounded by
\begin{equation}
C \int_0 ^t \frac{1}{\tau^{\frac{1}{2}}} \norm{f^\alpha}_{C([0, T]; W^{k,2}_{x,m} \cap X^{k,r} ) } d \tau = C \tau^{\frac{1}{2}} \norm{f^\alpha}_{C([0, T]; W^{k,2}_{x,m} \cap X^{k,r} )}.
\end{equation}
Furthermore, the left hand side is continuous in time since each term is either heat semigroup of some function or time integral of $L^1(0, T; W^{k,2}_{x,m} \cap X^{k,r}) $ functions. Therefore, by contraction mapping principle, there is unique function $f^\alpha \in C^1 ([0, T] ; W^{k,2}_x W^{k,2}_m \cap X^{k,r} )$ satisfying (\ref{truncatedsolution}). One consequence is that $f^\alpha$ is a classical solution of (\ref{truncatedFP}). That is, by Sobolev embedding $f^\alpha \in C^1 ([0, T] ; C^2(x,m))$ and satisfies (\ref{truncatedFP}) pointwise. Therefore, in view of the maximum principle, we have $f ^\alpha \ge 0$ for all $(x, m, t)$. Then same argument as above and $f^\alpha \ge 0$ show that $M_{0,0} [f^\alpha], \bar{M}_{2q} [f^\alpha], \bar{M}_{4q} [f^\alpha] \in C^1 \left ( [0, T], W^{k,1}_x \right ) $. 

\subsection{Uniform bounds on moments} \label{unifBdmom}
In this section, we investigate bounds on moments for approximate solutions, which is uniform in $\alpha$. By Lemma \ref{momentSobolev}, we conclude that 
\begin{equation}
M_{a,b} ^\alpha = M_{a,b} [f^\alpha] \in Lip(0, T; W^{2,2}),
\end{equation}
and we saw $M_{0,0} [f^\alpha], \bar{M}_{2q} [f^\alpha], \bar{M}_{4q} [f^\alpha] \in C^1 \left ( [0, T], W^{k,1}_x \right ) $. Also, since $\nabla_m f^\alpha \in X^r$, by integration by parts we wee that
\begin{equation}
\int_{\mathbb{R}^2} m_1 ^a m_2 ^b \nabla_m (m \psi_\alpha f^\alpha ) dm (x, t) = - \int_{\mathbb{R}^2} \nabla_m (m_1 ^a m_2 ^b) m \psi_\alpha f^\alpha dm \in L^\infty (0, T; L^2)
\end{equation}
and similar identity holds for $\epsilon \nabla_m \cdot (f^\alpha \nabla_m U \psi_\alpha )$ term. Therefore, we see that the following equation holds for all $a, b\ge 0$ and almost every $(x, t)$: 
\begin{equation}
\begin{gathered}
\partial_t M_{a,b} ^\alpha + u^\alpha \cdot \nabla_x M_{a,b}^\alpha - \nu_2 \Delta_x M_{a,b} ^\alpha  + 2q \epsilon (a+b) \int_{\mathbb{R}^2}m_1 ^a m_2 ^b |m|^{2(q-1)} \psi_\alpha f^\alpha dm \\
= \epsilon \left ( a(a-1) M_{a-2,b} ^\alpha + b(b-1) M_{a,b-2} ^\alpha \right ) \\ + a \partial_1 u_1 ^\alpha \int_{\mathbb{R}^2} m_1 ^a m_2 ^b \psi_\alpha f^\alpha dm + a \partial_1 u_2 ^\alpha  \int_{\mathbb{R}^2} m_1 ^{a-1} m_2 ^{b+1}  \psi_\alpha f^\alpha dm \\
+ b \partial_2 u_1 ^\alpha \int_{\mathbb{R}^2} m_1 ^{a+1} m_2 ^{b-1} \psi_\alpha f^\alpha dm + b \partial_2 u_2 ^\alpha \int_{\mathbb{R}^2} m_1 ^a m_2 ^b \psi_\alpha f^\alpha dm
\end{gathered} \label{alphamoments}
\end{equation}
and all the terms are in $L^\infty (0, T; L_x ^2)$. Especially, for 
\begin{equation}
\bar{M}_{2k} ^\alpha = \bar{M}_{2k} [f^\alpha]
\end{equation}
we have the following:
\begin{equation}
\begin{gathered}
\partial_t \bar{M}_{2k} ^\alpha + u^\alpha \cdot \nabla_x \bar{M}_{2k} ^\alpha - \nu_2 \Delta_x \bar{M}_{2k} ^\alpha + (2q) \epsilon (2k) \int_{\mathbb{R}^2} |m|^{2(k+q-1)} \psi_\alpha f^\alpha dm \\
= \epsilon (2k)^2 \bar{M}_{2(k-1) } ^\alpha +  \mathrm{Tr} \left ( (\nabla_x  u ^\alpha ) (2k) \int_{\mathbb{R}^2} |m|^{2(k-1) } m \otimes m \psi_\alpha f^\alpha dm \right ).
\end{gathered} \label{alpharadmoments}
\end{equation}
From (\ref{alphamoments}) and (\ref{alpharadmoments}) we derive four estimates independent of $\alpha$: the first one is a set of $L^2$ estimates for all even moments, which gives us an $X^r$ estimate for the limiting object. The second one is a set of $L^\infty (0. T; L^2) \cap L^2 (0, T; W^{1,2} )$ bounds for all moments. The third one is a set of $L^\infty (0, T; W^{1,2}) \cap L^2 (0, T; W^{2,2} )$ estimates for even moments up to degree $2q$, which enables us to establish regularity for the stress field $\sigma$. Finally we obtain a $L^p$ estimate, $1 \le p \le 2$ for $\bar{M}_{4q}$ and $M_{0,0}$, which gives us a $L^1$ bound for $\sigma$. Then we use them to bound $\partial_t M_{a,b} ^\alpha$ uniformly in $\alpha$, in the space $L^2 (0, T; W^{-1, 2})$. \newline 
To obtain first three bounds, we need to deal with the terms coming from restoring force $\nabla_m \cdot (\nabla_m U \psi_\alpha f^\alpha)$ because it contains higher moments. However, they are harmless in $L^2$ norm due to the following simple observation:
\begin{lemma}
Let $\mu_1 (dm) , \mu_2 (dm)$ be nonnegative measures and $p$ be a nonnegative integer. Then
\begin{equation}
\sum_{a, b \ge 0, a+b = 2p} M_{a,b} [\mu_1] M_{a,b} [\mu_2] \ge 0.
\end{equation}
\label{restoringpositive}
\end{lemma}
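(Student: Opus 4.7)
The plan is to expand the definition of the moments, interchange the sum with the double integral against $\mu_1 \otimes \mu_2$, and then verify that the resulting polynomial integrand is nonnegative pointwise. Concretely, assuming the moments appearing on the left-hand side are finite (as they are in every application in the paper), Fubini gives
\begin{equation*}
\sum_{a+b=2p} M_{a,b}[\mu_1] M_{a,b}[\mu_2] = \int_{\mathbb{R}^2}\!\int_{\mathbb{R}^2} \sum_{a+b=2p} m_1^a m_2^b\, n_1^a n_2^b\, \mu_1(dm)\, \mu_2(dn) = \int_{\mathbb{R}^2}\!\int_{\mathbb{R}^2} S_{2p}(m_1 n_1, m_2 n_2)\, \mu_1(dm)\, \mu_2(dn),
\end{equation*}
where $S_{2p}(u,v) := \sum_{k=0}^{2p} u^k v^{2p-k}$.

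The second and main step is to show that $S_{2p}(u,v) \ge 0$ for every pair of real numbers $u,v$. If $u=v$ then $S_{2p}(u,v) = (2p+1) u^{2p} \ge 0$. If $u \neq v$, the elementary geometric sum identity gives
\begin{equation*}
S_{2p}(u,v) = \frac{u^{2p+1} - v^{2p+1}}{u - v},
\end{equation*}
and since $x \mapsto x^{2p+1}$ is a strictly increasing function on $\mathbb{R}$, the numerator and denominator have the same sign, so the quotient is nonnegative.

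Combining these two observations, the integrand $S_{2p}(m_1 n_1, m_2 n_2)$ is pointwise nonnegative, and since $\mu_1, \mu_2$ are nonnegative measures, the double integral is nonnegative, which proves the lemma. There is no real obstacle here: the only thing to watch is the justification of swapping sum and integral, which is immediate because the sum is finite (only $2p+1$ terms) and each $M_{a,b}[\mu_i]$ is assumed finite. The content of the lemma is entirely encoded in the algebraic identity for $S_{2p}$ together with the monotonicity of odd powers.
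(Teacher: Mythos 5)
Your proof is correct, and it takes a genuinely different route from the one in the paper. The paper works at the level of the moment sequence directly: it applies the Cauchy--Schwarz inequality in the form $|M_{a,b}[\mu_i]| \le \sqrt{M_{a+1,b-1}[\mu_i]}\sqrt{M_{a-1,b+1}[\mu_i]}$ for $a,b$ odd, uses this to control the (possibly negative) odd-odd products $M_{a,b}[\mu_1]M_{a,b}[\mu_2]$ by geometric means of the neighboring even-even products, and then observes that what remains is bounded below by a sum of squares of differences $\bigl(\sqrt{M_{2(a'+1),2(p-a'-1)}[\mu_1]M_{2(a'+1),2(p-a'-1)}[\mu_2]} - \sqrt{M_{2a',2(p-a')}[\mu_1]M_{2a',2(p-a')}[\mu_2]}\bigr)^2$. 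Your argument instead lifts everything to the product measure $\mu_1\otimes\mu_2$ and reduces the claim to the pointwise nonnegativity of the symmetric polynomial $S_{2p}(u,v)=\sum_{k=0}^{2p}u^kv^{2p-k}$, which follows cleanly from the geometric-sum identity $S_{2p}(u,v)=(u^{2p+1}-v^{2p+1})/(u-v)$ and the monotonicity of odd powers. Your approach is arguably more transparent: it isolates the algebraic content in a single one-line fact about a bivariate polynomial and does not require any inequality manipulations at the level of moments; the paper's approach, by contrast, stays in moment-sequence language throughout, which fits naturally with the rest of its machinery (positive semidefinite sequences, Carleman conditions) and avoids introducing the product measure. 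Both proofs rest implicitly on the same underlying positivity, but yours makes the nonnegative integrand explicit. The only point to be careful about -- which you correctly flag -- is that the identification of the product of two moments with an integral against $\mu_1\otimes\mu_2$ requires the absolute moments of degree $2p$ to be finite, which holds in every application in the paper.
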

\begin{proof}
This follows from Cauchy-Schwarz inequality: if $a, b$ are odd, then 
\begin{equation}
\left | M_{a,b} [\mu_1] \right | \le \sqrt{M_{a+1, b-1} [\mu_1] } \sqrt{M_{a-1, b+1} [\mu_1] }
\end{equation}
and same for $M_{a,b} [\mu_2]$. Then the left side of the claimed inequality is bounded below by sum of perfect squares
\begin{equation}
\sum_{a'= 0} ^ {p-1}  \left ( \sqrt{M_{2(a'+1), 2(p - a' - 1)} [\mu_1 ] M_{2(a'+1), 2(p - a' - 1)} [\mu_2 ]} - \sqrt{M_{2a', 2(p-a')} [\mu_1 ] M_{2a', 2(p-a')} [\mu_2 ] } \right )^2 \ge 0.
\end{equation}
\end{proof}
\paragraph{$L^2$ bounds.} By multiplying $\bar{M}_{2k} ^\alpha$ to (\ref{alpharadmoments}) and integrating, and applying integration by parts to spatial derivatives for $ \nu_2 \Delta_x \bar{M}_{2k} ^\alpha$ term (which is rigorous since $ \nabla_x ^p \bar{M}_{2k} \in L^2$ for $p \le 2$) and $\bar{M}_{2k} ^\alpha \partial_t \bar{M}_{2k}^\alpha = \frac{1}{2} \partial_t \left ( \bar{M}_{2k} ^{\alpha} \right ) ^2$ (which is also rigorous since $\left (\bar{M}_{2k} ^\alpha \right )^2 \in C^1 ([0, T] ; L^1)$ ), and applying Lemma \ref{restoringpositive} as $\mu_1 = |m|^{2(k+q-1)} \psi_\alpha f^\alpha$ and $\mu_2 = |m|^{2k}  f^\alpha $ with $p=0$, and bounding $\psi_\alpha f^\alpha$ by $f^\alpha$ and $m \otimes m \psi_\alpha f^\alpha$ by $|m|^2 f^\alpha$ we have
\begin{equation}
\begin{gathered}
\frac{1}{2} \frac{d}{dt} \norm{\bar{M}_{2k} ^\alpha }_{L^2} ^2 + \nu_2 \norm{\nabla_X \bar{M}_{2k} ^\alpha}_{L^2} ^2 \le \epsilon (2k)^2 \norm{\bar{M}_{2(k-1)}^\alpha}_{L^2} \norm{\bar{M}_{2k} ^\alpha}_{L^2} + 2k  \norm{\nabla_x u (t) }_{L^\infty } \norm{\bar{M}_{2k} ^\alpha}_{L^2} ^2
\end{gathered} 
\end{equation}
where Young's inequality $\norm{\nabla_x u^\alpha (t) }_{L^\infty} \le \norm{g_\alpha}_{L^1} \norm{\nabla_x u(t) }_{L^\infty}$ is used. Dividing this by $(2k)! \norm{\bar{M}_{2k} ^\alpha}_{L^2}$, multiplying $z^{2k}$ and summing those up for all $k\ge0$ we get
\begin{equation}
\begin{gathered}
\frac{d}{dt} \sum_{k=0} ^\infty \frac{\norm{\bar{M}_{2k}  ^\alpha (t) }_{L^2} }{(2k)!} z^{2k} \le 2 \epsilon \sum_{k=1} ^\infty \frac{\norm{\bar{M}_{2(k-1)}  ^\alpha (t) }_{L^2} }{(2(k-1))!} z^{2k} + 2 k \norm{\nabla_x u (t) }_{L^\infty } \sum_{k=0} ^\infty \frac{\norm{\bar{M}_{2k}  ^\alpha (t) }_{L^2} }{(2k)!} z^{2k}.
\end{gathered}
\end{equation}
Introducing
\begin{equation}
F_e ^\alpha (t; z) = \sum_{k=0} ^\infty \frac{\norm{\bar{M}_{2k}  ^\alpha (t) }_{L^2} }{(2k)!} z^{2k}
\end{equation}
we get
\begin{equation}
\frac{d}{dt} F_e ^\alpha (t; z) \le 2 \epsilon z^2 F_e ^\alpha (t; z) + \norm{\nabla_x u (t) }_{L^\infty} z \frac{d}{dz} F_e ^\alpha (t; z).
\end{equation}
Therefore, we have
\begin{equation}
F_e ^\alpha (t; z) \le F_e ^\alpha (0; z e^{\int_0 ^t \norm{\nabla_x u (\tau) }_{L^\infty} d\tau } ) \exp \left ( 2\epsilon \int_0 ^t z^2 e^{2 \int_s ^t \norm{\nabla_x u(\tau)}_{L^\infty} d\tau } ds \right ),
\end{equation}
in other words,
\begin{equation}
\norm{f^\alpha (t) }_{X^{\frac{r}{\int_0 ^t \norm{\nabla_x u}_{L^\infty} d\tau }}} \le e^{2\epsilon T r^2 }  \norm{f_0 ^\alpha}_{X^r} \le C(r, T) \norm{\mu_0}_{X^r} \label{Xrtype}
\end{equation}
where the last inequality comes from Lemma \ref{approxid}. We also establish $L^\infty (0, T; L^2) \cap L^2 (0, T; W^{1,2} )$ estimates for all moments. For $a, b\ge 0$ with $a+b =2k \le 2p$, we multiply $M_{a,b} ^\alpha$ to each of (\ref{alphamoments}), sum over all such $a, b$, and integrate in $x$. Again we bound truncated terms $\psi_\alpha$ by $1$ and if $m_1 ^a m_2 ^b$ by $|m|^{a+b}$. Then we get
\begin{equation}
\begin{gathered}
\frac{1}{2} \frac{d}{dt} \norm{\dot{\vec{M}}_{2k} ^\alpha }_{L^2} ^2 + \nu_2 \norm { \nabla_x \dot{\vec{M}}_{2k} ^\alpha }_{L^2} ^2 \le C \epsilon(2k)^2 \norm{\dot{\vec{M}}_{2(k-1)} ^\alpha}_{L^2} \norm{\dot{\vec{M}}_{2k} ^\alpha }_{L^2} + C k \norm{\nabla_x u(t) }_{L^2} \norm{ \dot{\vec{M}}_{2k} ^\alpha }_{L^4} ^2
\end{gathered}
\end{equation}
where
\begin{equation}
\dot{\vec{M}}_{2k} ^\alpha = \left ( M_{2k, 0} ^\alpha, M_{2k-1, 1 } ^\alpha, \cdots, M_{0, 2k} ^\alpha \right ).
\end{equation}
Again we used Lemma \ref{restoringpositive} with $\mu_1 = |m|^{2(q-1)} \psi_\alpha f^\alpha$ and $\mu_2 = f^\alpha$. Then by Ladyzhenskaya's inequality,
\begin{equation}
\norm{\dot{\vec{M}}_{2k} ^\alpha }_{L^4} ^2 \le C \norm{\dot{\vec{M}}_{2k} ^\alpha }_{L^2} \norm{ \nabla_x \dot{\vec{M}}_{2k} ^\alpha }_{L^2},
\end{equation}
and we have the following by summing over all $k \le p$:
\begin{equation}
\begin{gathered}
\frac{d}{dt} \sum_{k=0} ^p \norm{\dot{\vec{M}}_{2k} ^\alpha }_{L^2} ^2 + \nu_2 \sum_{k=0} ^p \norm { \nabla_x \dot{\vec{M}}_{2k} ^\alpha }_{L^2} ^2  \le C(\epsilon, \nu_2) p^2 (\norm{\nabla_x u (t) }_{L^2} ^2 +1 )  \sum_{k=0} ^p \norm{\dot{\vec{M}}_{2k} ^\alpha }_{L^2} ^2
\end{gathered}
\end{equation}
or by introducing
\begin{equation}
\vec{M}_{2p} ^{e,\alpha} = \left ( \dot{\vec{M}}_0 ^\alpha, \dot{\vec{M}}_2 ^\alpha, \cdots, \dot{\vec{M}}_{2p} ^\alpha \right )
\end{equation}
we have
\begin{equation}
\frac{d}{dt} \norm{\vec{M}_{2p} ^{e,\alpha}}_{L^2} ^2 + \nu_2  \norm{ \nabla_x \vec{M}_{2p} ^{e,\alpha}}_{L^2} ^2  \le C(\epsilon, \nu_2) p^2 (\norm{\nabla_x u (t) }_{L^2} ^2 +1 ) \norm{\vec{M}_{2p} ^{e,\alpha}}_{L^2} ^2
\end{equation}
and by Gr\"{o}nwall we have
\begin{equation}
\begin{gathered}
\norm{\vec{M}_{2p} ^{e,\alpha} (t) }_{L^2} ^2 + \nu_2 \int_0 ^t \norm{\nabla_x \vec{M}_{2p} ^{e,\alpha} (s) }_{L^2} ^2 ds  \le \exp  \left (Cp^2 \left ( \norm{\nabla_x u  }_{L^\infty(0, T; L^2)} ^2 T + T  \right ) \right ) C(p) \norm{\vec{M}_{2p} ^{e} [\mu_0] }_{L^2} ^2.
\end{gathered} \label{L2H1even}
\end{equation}
Then using this we can find a $L^\infty (0, T; L^2) \cap L^2 (0, T; W^{1,2})$ bound for $M_{a,b}$ where $a+b = 2p+1$; from (\ref{alphamoments}) we bound all terms of the form $ \int_{\mathbb{R}^2} m_1 ^{a'} m_2 ^{b'}  \psi_\alpha f^\alpha dm$ by $C \int_{\mathbb{R}^2} (|m|^{a'+b'-1}+  |m|^{a'+b'+1 }  ) f^\alpha dm$, that is, we bound truncation $\psi_\alpha$ by $1$, and moments with odd degree $m_1 ^{a'} m_2 ^{b'}$ by arithmetic mean of neighboring radial moments $|m|^{a'+b'-1} + |m|^{a'+b'+1}$. Then using all the same techniques, we obtain
\begin{equation}
\begin{gathered}
\norm{\vec{M}_{2p+1} ^\alpha (t) }_{L^2} ^2 + \nu_2 \int_0 ^t \norm{\nabla_x \vec{M}_{2p+1} ^\alpha (s) }_{L^2} ^2 ds \le C(p, \epsilon)^{\norm{\nabla_x u}_{L^\infty (0, T; L^2 ) }  ^2 T+ T}  \norm{\vec{M}_{2(p+1)} [\mu_0]}_{L^2} ^2 \label{L2H1odd}
\end{gathered}
\end{equation}
where
\begin{equation}
\vec{M}_{2p+1} ^\alpha  = \left ( \dot{\vec{M}}_{0} ^\alpha, \dot{\vec{M}}_1 ^\alpha, \cdots, \dot{\vec{M}}_{2p+1} ^\alpha \right ).
\end{equation}
Note that instead of bounding $\int_0 ^T \norm{\nabla_x u (t) }_{L^2} ^2 dt$ by $\norm{\nabla_x u}_{L^\infty (0, T; L^2) } ^2 T$ we can bound it by $\norm{\nabla_x u}_{L^2 (0, T; L^2)} ^2$ to obtain a similar estimate
\begin{equation}
\begin{gathered}
\norm{\vec{M}_{2p} ^{e,\alpha} (t) }_{L^2} ^2 + \nu_2 \int_0 ^t \norm{\nabla_x \vec{M}_{2p} ^{e,\alpha} (s) }_{L^2} ^2 ds  \le \exp  \left (Cp^2 \left ( \norm{\nabla_x u  }_{L^2(0, T; L^2)} ^2 + T  \right ) \right ) C(p) \norm{\vec{M}_{2p} ^{e} [\mu_0] }_{L^2} ^2,
\end{gathered} \label{L2H1even2}
\end{equation}
which is crucial in global well-posedness, and
\begin{equation}
\begin{gathered}
\norm{\vec{M}_{2p+1} ^\alpha (t) }_{L^2} ^2 + \nu_2 \int_0 ^t \norm{\nabla_x \vec{M}_{2p+1} ^\alpha (s) }_{L^2} ^2 ds \le C(p,\epsilon)^{\norm{\nabla_x u}_{L^2 (0, T; L^2 ) }  ^2 + T}  \norm{\vec{M}_{2(p+1)} [\mu_0]}_{L^2} ^2. \label{L2H1odd2}
\end{gathered}
\end{equation}
\paragraph{$W^{1,2}$ bounds.} Then, we consider the third estimate, $L^\infty (0, T; W^{1,2} ) \cap L^2 (0, T; W^{2,2} )$ bounds for even moments of degree up to $2k$, where $k=4q-1$. We can apply same technique for odd moments too, but we only need even moments for the proof of our result. We multiply $- \Delta_x M_{a,b} ^\alpha$ to the equation (\ref{alphamoments}) and integrate: again integration by parts are rigorous. We use previous pointwise bound for truncated moments, and we get
\begin{equation}
\begin{gathered}
\frac{d}{dt} \norm{\nabla_x \vec{M}_{2k} ^{e,\alpha}}_{L^2} ^2 + \nu_2  \norm{ \Delta_x \vec{M}_{2k} ^{e,\alpha}}_{L^2} ^2 \le C(\epsilon) k^2  \norm{\nabla_x \vec{M}_{2k} ^{e,\alpha}}_{L^2} ^2 \\
+ C \norm{\nabla_x u(t) }_{L^2} \norm{\nabla_x \vec{M}_{2k} ^{e,\alpha}}_{L^4} ^2 + C(\nu_2)  k^2 \norm{\nabla_x u (t)}_{L^4} ^2 \norm{ \vec{M}_{2k} ^{e,\alpha}}_{L^4} ^2 + C(\epsilon, \nu_2) (kq)^2  \norm{ \vec{M}_{2(k+q-1)} ^{e,\alpha}}_{L^2} ^2 
\end{gathered}
\end{equation}
and again by Ladyzhenskaya's inequality, we have
\begin{equation}
\begin{gathered}
\frac{d}{dt} \norm{\nabla_x \vec{M}_{2k} ^{e,\alpha}}_{L^2} ^2 + \nu_2  \norm{ \Delta_x \vec{M}_{2k} ^{e,\alpha}}_{L^2} ^2  \\
 \le C(\epsilon, \nu_2 ) k^2 \left ( 1 + \norm{\nabla_x u (t) }_{L^2 } ^2 + \norm{\nabla_x u (t) }_{L^2 }  \norm{\Delta_x  u (t) }_{L^2 } \right )\norm{\nabla_x \vec{M}_{2k} ^{e,\alpha}}_{L^2} ^2  + C(\epsilon, \nu_2) (kq)^2  \norm{ \vec{M}_{2(k+q-1)} ^{e,\alpha}}_{L^2} ^2, 
\end{gathered}
\end{equation}
and again by Gr\"{o}nwall we have
\begin{equation}
\begin{gathered}
\norm{\nabla_x \vec{M}_{2k} ^{e,\alpha} (t)}_{L^2} ^2 + \nu_2  \int_0 ^t \norm{ \Delta_x \vec{M}_{2k} ^{e,\alpha} (s) }_{L^2} ^2  ds 
\\
\le C(\epsilon, \nu_2, k, q )^{ T+\norm{\nabla_x u}_{L^\infty (0, T; L^2 ) } ^2 T + \norm{\nabla_x u}_{L^\infty (0, T; L^2 ) } \norm{ \Delta_x  u}_{L^2 (0, T; L^2) } T^{\frac{1}{2}} } \\ 
\left ( \norm{\nabla_x \vec{M}_{2k} ^{e} [\mu_0] }_{L^2} ^2 + C(q, \epsilon )^{\norm{\nabla_x u}_{L^\infty (0, T; L^2 ) } ^2 T + T }  \norm{ \vec{M}_{2(k+q-1)} ^{e} [\mu_0] }_{L^2} ^2  \right ).
\end{gathered} \label{L2H2q}
\end{equation}
\paragraph{$L^1$ bounds.} In addition, we have $L^1$ bound for $\bar{M}_{4q}^\alpha $ and $M_{0,0} ^\alpha$: first we have
\begin{equation}
\partial_t M_{0,0} ^\alpha + u^\alpha \cdot \nabla M_{0,0} ^\alpha - \nu_2 \Delta_x M_{0,0} ^\alpha = 0
\end{equation}
and we can integrate them rigorously to conclude
\begin{equation}
\norm{M_{0,0} ^\alpha (t) }_{L^1} = \norm{M_{0,0} [\mu_0] }_{L^1}.
\end{equation}
Also, we have, by pointwise estimate
\begin{equation}
\begin{gathered}
\int_{\mathbb{R}^2} |m|^{2(3q-1)} \psi_\alpha f^\alpha dm \ge 0,  \\
\left | \int_{\mathbb{R}^2} |m| ^{2(2q-1) } m \otimes m \psi_\alpha f^\alpha dm \right | \le \bar{M}_{4q} ^\alpha, \\
\bar{M}_{2(2q-1)}^\alpha \le C(q) (M_{0,0}^\alpha + \bar{M}_{4q} ^\alpha )
\end{gathered}
\end{equation}
and integrating we get
\begin{equation}
\frac{d}{dt} \norm{\bar{M}_{4q} ^\alpha }_{L^1} \le C(q, \epsilon ) (\norm{\nabla_x u (t) }_{L^\infty } + 1) \norm{\bar{M}_{4q} ^\alpha }_{L^1} + C(q, \epsilon) \norm{M_{0,0} ^\alpha}_{L^1},
\end{equation}
and here by Agmon's inequality
\begin{equation}
\norm{\nabla_x u(t)}_{L^\infty} \le \norm{\nabla_x u(t) }_{L^2} ^\frac{1}{2} \norm{\Delta_x \nabla_x u(t) }_{L^2} ^\frac{1}{2}
\end{equation}
and by Gr\"{o}nwall we have
\begin{equation}
\norm{\bar{M}_{4q} ^\alpha (t) }_{L^1} \le C(q, \epsilon  ) ^{\norm{\nabla_x u}_{L^2 (0, T; W^{2,2} )} T^{\frac{1}{2}} + T } (\norm{\bar{M}_{4q} [\mu_0] }_{L^1}  + C(q, \epsilon ) \norm{M_{0,0} [\mu_0 ] }_{L^1} T ), \label{LinfL1}
\end{equation}
and from this we can say that $\bar{M}_{4q} ^\alpha$ (and also $\bar{M}_{2q}$ by the above pointwise estimate) is bounded in $L^\infty (0, T; L^p)$ where $1 \le p \le 2$ uniformly in $\alpha$ due to interpolation, bounds depend only on initial data. 
\paragraph{$W^{-1,2}$ bounds for $\partial_t M_{a,b}$s.} Finally, due to (\ref{alphamoments}), we notice that $\partial_t M_{a,b} ^\alpha$ is uniformly bounded in $L^2 (0, T; W^{-1, 2} )$; since $u^\alpha \in L^\infty (0, T; L^\infty )$ and $\nabla_x u^\alpha \in L^2 (0, T; L^\infty ) $ are uniformly bounded and all $M_{a,b} ^\alpha \in L^\infty (0, T; L^2 ) \cap L^2 (0, T; W^{1,2} )$ are uniformly bounded, terms involving $u^\alpha$ are uniformly bounded in $L^2 (0, T; L^2)$. Other terms except for $\Delta_x M_{a,b}^\alpha$ are uniformly bounded in $L^\infty (0 ,T; L^2)$, and $\Delta_x M_{a,b} ^\alpha$ is uniformly bounded in $L^2 (0, T; W^{-1, 2} )$. 
\paragraph{Weak limit of moments. } Since
\begin{equation}
\begin{gathered}
L^\infty (0, T; L^2) = \left ( L^1 (0, T; L^2) \right )^*, \,\, L^2 (0, T; L^2) = \left (L^2 (0, T; L^2) \right )^*, \\
L^\infty (0, T; L^q) = \left ( L^1 (0, T; L^{q'}) \right )^*, 1<q<2, \frac{1}{q} + \frac{1}{q'} = 1, \\
 L^2 (0, T; W^{-1,2}) =\left ( L^2 (0, T; W^{1,2} ) \right )^* ,
\end{gathered}
\end{equation}
by Theorem \ref{dualofBanach}, and since we have bounds (\ref{Xrtype}), (\ref{L2H1even}), (\ref{L2H1odd}), (\ref{L2H1even2}),  (\ref{L2H1odd2}), (\ref{L2H2q}), (\ref{LinfL1}) (and $L^\infty (0, T; L^p), 1 < p < 2$ bounds due to interpolation), by Banach-Alaoglu there is a weak* limit $M_{a,b}$,
\begin{equation}
M_{a,b} ^\alpha \rightarrow M_{a,b}
\end{equation}
in the weak-* topology of $L^\infty (0, T; L^2) \cap L^2 (0, T; W^{1,2} )$ with the bounds
\begin{equation}
\begin{gathered}
\sum_{p=0} ^\infty \frac{\norm{\bar{M}_{2p} (t) }_{L^2} }{(2p)! } \left ( \frac{r}{\exp \left ( \int_0 ^t \norm{\nabla_x u(s) }_{L^\infty} ds \right ) } \right )^{2p} \le C(r, T) \norm{\mu_0}_{X^r}, 
\end{gathered}\label{XrtypeM}
\end{equation}
\begin{equation}
\begin{gathered}
\norm{\vec{M}_{k}}_{L^\infty (0, T; L^2 )}^2 + \nu_2 \norm{\vec{M}_k}_{L^2 (0, T; W^{1,2} ) } ^2  \le C(k) ^{\norm{\nabla_x u}_{L^\infty (0, T; L^2 ) } ^2 T + T } \norm{\vec{M}_{k+(k \mathrm{mod} 2 )} [\mu_0] }_{L^2} ^2, 
\end{gathered}\label{L2H1M}
\end{equation}
\begin{equation}
\begin{gathered}
\norm{\vec{M}_{k}}_{L^\infty (0, T; L^2 )}^2 + \nu_2 \norm{\vec{M}_k}_{L^2 (0, T; W^{1,2} ) } ^2  \le C(k) ^{ T+ \norm{\nabla_x u}_{L^2 (0, T; L^2 ) } ^2 } \norm{\vec{M}_{k+(k \mathrm{mod} 2 )} [\mu_0] }_{L^2} ^2, 
\end{gathered}\label{L2H1M2}
\end{equation}
\begin{equation}
\begin{gathered}
\norm{\vec{M}_{8q-2} ^e}_{L^\infty (0, T; W^{1,2} ) } ^2 + \nu_2 \norm{\vec{M}_{8q-2} ^e }_{L^2 (0, T; W^{2,2} ) } ^2 \\  \le C(\epsilon, \nu_2, q) ^{T + \norm{u}_{L^\infty (0, T; W^{1,2})} ^2 T + \norm{u}_{L^\infty (0, T; W^{1,2})} \norm{u}_{L^2 (0, T; W^{2,2})} T^{\frac{1}{2}} }  \left ( \norm{\vec{M}_{8q-2} ^e [\mu_0] }_{W^{1,2} } ^2 + \norm{\vec{M}_{16q-6} [\mu_0 ] }_{L^2} ^2 \right ), 
 \end{gathered} \label{L2H2qM}
\end{equation}
\begin{equation}
\begin{gathered}
\norm{M_{0,0} (t)}_{L^1} = \norm{M_{0,0} [\mu_0] }_{L^1}, \\
\norm{\bar{M}_{4q}  }_{L^\infty (0, T; L^1)} \le C(q, \epsilon  ) ^{\norm{\nabla_x u}_{L^2 (0, T; W^{2,2} )} T^{\frac{1}{2}} + T } (\norm{\bar{M}_{4q} [\mu_0] }_{L^1}  + C(q, \epsilon )T \norm{M_{0,0} [\mu_0]}_{L^1}  )
\label{LinfL1M}
\end{gathered}
\end{equation}
where the last bound in (\ref{LinfL1M}) is due to bounds on $L^\infty (0, T; L^p )$, $1 < p \le 2$, and the fact that $p \rightarrow \norm{f}_{L^p}$ is continuous. Furthermore, $\partial_t M_{a,b} \in L^2 (0, T; W^{-1, 2} )$ with bounds depending only on the initial data, due to weak* convergence. Also we have $\norm{M_{0,0} (t)}_{L^1} = \norm{M_{0,0} [\mu_0] }_{L^1}$ instead of $\le$ sign by the last assertion of Theorem \ref{FPKeu}: take $V = |m|^2 + \log \max (|x|, 1) $, where $\log \max (|x|, 1)$ should be understood, by a slight abuse of notation, a smooth, bounded function equals it for $|x| > 2$. Then $K(t) = C + \norm{u(t)}_{L^\infty}$, $H(t) = C \norm{\nabla_x u (t) }_{L^\infty } $ works. We remark that (\ref{L2H1M}) and (\ref{L2H1M2}) look similar, but in the estimate (\ref{L2H1M2}) requires only a bound on $\norm{\nabla_x u}_{L^2 (0, T; L^2)}$, and this fact will be used in proving global well-posedness of the coupled system.

\subsection{Existence of moment solution} \label{momsolexistence}
In this subsection, we prove the existence of moment solution using the limits $\{ M_{a,b} \}_{a,b}$. There are two points to remark: first, since the convergence of $M_{a,b}^\alpha$ to $M_{a,b}$ is weak and not pointwise a priori, so we need Aubin-Lions compactness lemma to make the convergence locally pointwise. Second, since the Fokker-Planck equation we consider is fully parabolic, in fact we can rely on parabolic theory to find limit density function. First we establish positive semidefiniteness for $\{M_{a,b} \}_{a,b}$. For all $\alpha > 0$, the sequence $\{M_{a,b} ^\alpha \}_{(a,b)}$ are positive semidefinite, since they are moments of nonnegative measures. Therefore,
\begin{equation}
\int_0 ^T \int_{\mathbb{R}^2}\sum_{i,j} c_i c_j M_{a_i + a_j, b_i + b_j } ^\alpha (x, t) \phi(x, t) dxdt \ge 0
\end{equation}
for all nonnegative test functions $\phi \in L^1 (0, T; L^2)$: then by the weak* limit 
\begin{equation}
\int_0 ^T \int_{\mathbb{R}^2}\sum_{i,j} c_i c_j M_{a_i + a_j, b_i + b_j }  (x, t) \phi(x, t) dxdt \ge 0
\end{equation}
and that means, $\{M_{a,b} \}_{(a,b)}$ is also positive semidefinite. Similarly, 
\begin{equation}
\int_0 ^T \int_{\mathbb{R}^2} \phi (x, t) \left (\bar{M}_{a+b} ^\alpha \pm M_{a,b} ^\alpha \right ) dxdt \ge 0
\end{equation}
so for almost all $(x, t)$ $|M_{a,b} (x, t) | \le \bar{M}_{a+b} (x, t) $. Then, from (\ref{XrtypeM}) and Lemma \ref{pointwiseCar} we see that for almost all $(x, t)$ there is a nonnegative measure $\mu = \mu(x, t; dm)$ such that $M_{a,b} (x, t) = M_{a,b} [\mu] (x, t)$ for all $a, b \ge 0$. It remains to show that actually $\mu$ is a weak solution to the Fokker-Planck equation: first we show that for $\phi \in C_0 ^\infty ([0, T] \times \mathbb{R}^2_x \times \mathbb{R}^2_m )$ with $\phi(T,x,m) = 0$
we have
\begin{equation}
\begin{gathered}
\int_0 ^T \int_{\mathbb{R}^2_x} \int_{\mathbb{R}^2 _m} \left ( \partial_t \phi + u(t) \cdot \nabla_x \phi + \left ( (\nabla_x u (t) ) - \nabla_m U \right ) m \cdot \nabla_m \phi + \epsilon \Delta_m \phi + \nu_2 \Delta_x \phi \right ) \\
\mu(x, t; dm) dx dt = - \int_{\mathbb{R}^2 _x } \int_{\mathbb{R}^2 _m} \phi(0, x, m) \mu_0 (x; dm) dx.
\end{gathered}
\end{equation}
Suppose that $\mathrm{supp} \,\, \phi \subseteq [R_1, R_2] \times B(0, R)_x \times B(0, R)_m$, which is a compact rectangle. Let $\eta$ be a $C_0 ^\infty ([0, T] \times \mathbb{R}^2_x )$ function, $0 \le \eta \le 1$, $\eta = 1 $ in $[R_1, R_2] \times B(0, R)_x$ and $\eta =0$ outside $[R_1 - 1, R_2 +1] \times B(0, 2R)_x$. Then for any $a, b \ge 0$, we have
\begin{equation}
\begin{gathered}
\eta M_{a,b} ^\alpha \in L^2 (0, T; W_0 ^{1,2} ( \Omega)) \rightarrow \eta M_{a,b} \,\, \mathrm{weak *} \,\,\mathrm{in} \,\, L^2 (0, T; W_0 ^{1,2} (\Omega) ) \\
\partial_t (\eta M_{a,b} ^\alpha )  \in L^2 (0, T; W^{-1, 2} (\Omega) )  \rightarrow \partial_t (\eta M_{a,b} ) \,\, \mathrm{weak *} \,\,\mathrm{in} \,\, L^2 (0, T; W ^{-1,2} (\Omega) )
\end{gathered}
\end{equation}
where $\Omega = [R_1 - 1, R_2 +1] \times B(0, 2R)_x$. By Rellich-Kondrachov theorem $W_0 ^{1,2} (\Omega) \subseteq L^2 (\Omega)$ is compact and $L^2 (\Omega) \subseteq W^{-1, 2} (\Omega)$ is continuous. Therefore, by Aubin-Lions lemma we see that there is a subsequence $\eta M_{a,b} ^{\beta}$ which converges to $\eta M_{a,b}$ in $L^2 (0, T; L^2(\Omega))$. By a standard diagonalization method, there is a subsequence $\eta M_{a,b} ^\gamma$ such that all moments $\eta M_{a,b} ^\gamma$ converges to $\eta M_{a,b}$ in the topology of $L^2 (0, T; L^2 (\Omega))$. Therefore, there is a subsequence, again denoted by $\eta M_{a,b} ^\alpha$, converges to $\eta M_{a,b}$ almost everywhere, for all moments $a, b \ge 0$. Especially, $M_{a,b} ^\alpha (x, t) \rightarrow M_{a,b} (x, t)$ for almost all $(x, t) \in [R_1, R_2] \times B(0, R)_x $. Therefore, by Theorem \ref{weakconvergence} we see that $\mu^{\alpha} (x, t; dm)$ converges weakly to $\mu (x, t; dm)$. Note that $f^\alpha$ satisfies
\begin{equation}
\begin{gathered}
\int_0 ^T \int_{\mathbb{R}^2_x} \int_{\mathbb{R}^2 _m}  ( \partial_t \phi + u^\alpha (t) \cdot \nabla_x \phi + \left ( (\nabla_x u^\alpha (t) )m - \nabla_m U \right )  \psi_\alpha \cdot \nabla_m \phi \\ + \epsilon \Delta_m \phi + \nu_2 \Delta_x \phi  ) 
f^\alpha (x, t; dm) dx dt = - \int_{\mathbb{R}^2 _x } \int_{\mathbb{R}^2 _m} \phi(0, x, m) f_0 ^\alpha (x; dm) dx.
\end{gathered}
\end{equation}
If $\alpha > R$ then $m\psi_\alpha \cdot \nabla_m \phi = m \nabla_m \phi$. Also for almost every $x, t$ 
\begin{equation}
\begin{gathered}
\int_{\mathbb{R}^2_m } \left (\partial_t \phi - \nabla_m U  \cdot \nabla_m \phi + \epsilon \Delta_m \phi + \nu_2 \Delta_x \phi \right ) f^{\alpha} (x, t; dm) \\
 \rightarrow \int_{\mathbb{R}^2_m } \left (\partial_t \phi - \nabla_m U  \cdot \nabla_m \phi + \epsilon \Delta_m \phi + \nu_2 \Delta_x \phi \right ) \mu (x, t; dm)
\end{gathered}
\end{equation}
by weak convergence. Furthermore, the left term is bounded by $C_\phi (x, t) \eta M_{0,0}[f^\alpha]$, where $\eta M_{0,0} [f^\alpha] \rightarrow \eta M_{0,0} [\mu] \in L^2 (0, T; L^2 (\Omega) )$ and $C_\phi (x, t) \in L^2 (0, T; L^2 (\Omega))$ so we can apply generalized dominated convergence theorem to conclude that
\begin{equation}
\begin{gathered}
\int_0 ^T \int_{\mathbb{R}^2_x} \int_{\mathbb{R}^2_m } \left (\partial_t \phi - \nabla_m U  \cdot \nabla_m \phi + \epsilon \Delta_m \phi + \nu_2 \Delta_x \phi \right ) f^{\alpha} (x, t; dm) \\
 \rightarrow \int_0 ^T \int_{\mathbb{R}^2_x} \int_{\mathbb{R}^2_m } \left (\partial_t \phi - \nabla_m U  \cdot \nabla_m \phi + \epsilon \Delta_m \phi + \nu_2 \Delta_x \phi \right ) \mu (x, t; dm).
\end{gathered}
\end{equation}
Finally, for the term $u^{\alpha} (t) \cdot \nabla_x\phi + \nabla_x u^\alpha (f) m \cdot \nabla_m \phi$, we note that since $C_0 ^\infty ([0, T] \times \mathbb{R}^2_x \times \mathbb{R}^2_m ) = C_0 ^\infty ([0, T] \times \mathbb{R}^2_x ) \otimes C_0 ^\infty (\mathbb{R}^2_m )$ we only need to consider functions of the form $\phi(x, m, t) = \phi_1 (x, t) \phi_2 (m)$. Then the integral involving $u^{\alpha} (t) \cdot \nabla_x\phi $ becomes
\begin{equation}
\int_0 ^T \int_{\mathbb{R}^2 _x} u^{\alpha} (t) \cdot \nabla_x \phi_1 \int_{\mathbb{R}^2 _m} \phi_2 (m) f^{\alpha} (x, t; dm) dxdt
\end{equation}
and we note that $u^{\alpha} (t) \cdot \nabla_x \phi_1 \rightarrow u(t) \cdot \nabla_x \phi_1$ in $L^2 (0, T; L^2 (\Omega))$ and 
\begin{equation}
\int_{\mathbb{R}^2 _m} \phi_2 (m) f^{\alpha} (x, t; dm) \rightarrow  \int_{\mathbb{R}^2 _m} \phi_2 (m) \mu (x, t; dm)
\end{equation}
in $L^2 (0, T; L^2(\Omega))$ as before. We can deal with the term $\nabla_x u^\alpha (t) m \cdot \nabla_m \phi$ in the same way. Finally, by Lemma \ref{approxid} we see that $\int_{\mathbb{R}^2_m} \phi(0, x, m) f_0 ^\alpha (x; dm)$ converges to $\int_{\mathbb{R}^2_m}\phi(0, x, m) \mu (x; dm)$ almost every $x$, and they are bounded by $C_\phi M_{0,0} [f_0 ^\alpha]$ which converges to $C_\phi M_{0,0} [\mu_0]$ in $L^2$ (but since $\phi (0) $ is compactly supported it converges in $L^1$ too) so by generalized dominated convergence
\begin{equation}
- \int_{\mathbb{R}^2 _x } \int_{\mathbb{R}^2 _m} \phi(0, x, m) f_0 ^\alpha (x; dm) dx \rightarrow - \int_{\mathbb{R}^2 _x } \int_{\mathbb{R}^2 _m} \phi(0, x, m) \mu_0 (x; dm) dx.
\end{equation}
Similarly, for $\phi \in C_0 ^\infty (\mathbb{R}^2 _x \times \mathbb{R}^2 _m )$ we see that
\begin{equation}
\begin{gathered}
\int \phi f^\alpha (t) dmdx - \int \phi f^\alpha _0 dmdx = A_\alpha (t)
\end{gathered}
\end{equation}
and
\begin{equation}
\begin{gathered}
A_{\alpha} (t) \rightarrow \int_0 ^t \int_{m,x}  \left (u \cdot \nabla_x \phi +  ( \nabla_x u m - \nabla_m U )  \cdot \nabla_m \phi + \epsilon \Delta_m \phi + \nu_2 \Delta_x \phi \right ) \mu(x,\tau;dm)dx d\tau,
\end{gathered}
\end{equation}
where
\begin{equation}
A_\alpha (t) = \int_0 ^t \int_{m,x}  \left (u^\alpha \cdot \nabla_x \phi +  ( \nabla_x u^\alpha m - \nabla_m U )  \psi_\alpha \cdot \nabla_m \phi + \epsilon \Delta_m \phi + \nu_2 \Delta_x \phi \right ) f^\alpha dmdx d\tau.
\end{equation}
Note that by
\begin{equation}
\begin{gathered}
\norm{u^\alpha}_{L^1 (\mathbb{R}^2_m \times \mathbb{R}^2_x \times [0, T] ; f^\alpha dm dx dt) } \le \norm{u}_{L^\infty (0 , T; L^\infty )} \norm{M_{0,0} [\mu_0]}_{L^1}, \\
\norm{\nabla_x u^\alpha m \psi_\alpha}_{L^1 (\mathbb{R}^2_m \times \mathbb{R}^2_x \times [0, T] ; f^\alpha dm dx dt) }  \le \norm{\nabla_x u}_{L^\infty (0, T; L^2 ) } C(T, u) \norm{\vec{M}_2 [\mu_0] }_{L^2}, \\
\norm{\nabla_m U m \psi_\alpha}_{L^1 (\mathbb{R}^2_m \times \mathbb{R}^2_x \times [0, T] ; f^\alpha dm dx dt) } \le C(T, u) (\norm{\bar{M}_{2q} [\mu_0]}_{L^1} + C \norm{M_{0,0} [\mu_0] }_{L^1}),
\end{gathered} \label{densitybounds}
\end{equation}
we see that $|A_{\alpha} (t) | \le C(\phi) |t|$, where $C(\phi)$ depends only on $\phi$ and independent of $\alpha$. Furthermore, again $\int \phi f^\alpha (x, t, m) dm$ is pointwise bounded by $C_\phi M_{0,0} [f^\alpha] $, and note that in a ball $V \in \mathbb{R}^2 _x$ containing the support of $\int \phi f^\alpha (x, t, m) dm$ and a smooth cutoff $\eta$ which is $1$ in $\bar{V}$, with support contained in another ball $W$, $M_{0,0} [\mu^\alpha] \eta \in L^\infty (0, T; W^{1,2} (W) )$ with $\partial_t M_{0,0} \eta \in L^2 (0, T; W^{-1, 2} (W) )$: and $W^{1,2} (W) \subset L^2 (W)$ is again compact. Therefore again by Aubin-Lions, we see that for a subsequence $M_{0,0} [f^\alpha] \eta \rightarrow M_{0,0}[\mu] \eta$ strongly in $L^\infty (0, T; L^2 )$. Therefore, we conclude, by generalized dominated convergence, for almost every $t \in [0, T]$ $\int \phi f^\alpha (t) dmdx \rightarrow \int \phi \mu(t; dm) dx$, and we know that $\int \phi f_0 ^\alpha dmdx \rightarrow \int \phi \mu_0(dm) dx$. Therefore, we proved (\ref{solCauchy}).
Then we prove that in fact $\mu$ can be represented as a density function $f(x, t, m)$. Here we use the same argument to prove Theorem \ref{FPKeu}, used in \cite{MR3443169}. Let $U_k = B(0, k)_x \times B(0, k)_m \subset \mathbb{R}^2 _x \times \mathbb{R}^2 _m $, $J_k =[\frac{T}{k}, T \left ( 1 - \frac{1}{k} \right ) ]$, and $W_k$ be a neighborhood of $\bar{U}_k \times J_k$ with compact closure in $\mathbb{R}^2 _x \times \mathbb{R}^2 _m \times (0, T)$, for each $k > 2$. We then consider the subsequence of $f^{\alpha}$ that converging to $\mu(x, t; dm)$, what we used before.  Since we have
by Theorem \ref{FPKeu} 
\begin{equation}
\norm{f^\alpha}_{L^{\frac{7}{6}} (U_k \times J_k )} \le C(W_k , T, u, \mu_0)
\end{equation}
for each $k>2$. Then by Banach-Alaoglu and standard diagonalization technique, we can find a subsequence of $f^\alpha$ which converges weakly to a function $f(x, m, t)$ in $L^{\frac{7}{6} } (U_k \times J_k )$ for all $k>2$. Furthermore, $f(x, m, t) dm = \mu(x, t; dm)$ for almost every $(x, t)$. 

\subsection{Dependence on fluid velocity fields} \label{Flfielddep}
In this subsection, we prove the last assertion of Theorem \ref{momentsolutionexists}. Suppose that $u, v$ satisfies (\ref{velinitcond}) and $f, g$ be solutions of two microscopic equations with velocity field $u$ and $v$ respectively and same initial data $\mu_0$ satisfying conditions (\ref{initpositivity}), (\ref{initmoment}), (\ref{initstress}), (\ref{initentropy}). Also $f^\alpha$ and $g^\alpha$ is defined same as before. Then we have
\begin{equation}
\begin{gathered}
\partial_t (f^\alpha - g^\alpha) + u^\alpha \cdot \nabla_x (f^\alpha - g^\alpha ) + (\nabla_x u^\alpha ) m \psi_\alpha \cdot \nabla_m (f^\alpha - g^\alpha ) \\
- \nabla_m \cdot ( \nabla_m U \psi_\alpha (f^\alpha - g^\alpha ) ) - \epsilon \Delta_m (f^\alpha - g^\alpha ) - \nu_2 \Delta_x (f^\alpha - g^\alpha ) \\
= - (u^\alpha - v^\alpha ) \cdot \nabla_x g^\alpha - \nabla_x (u^\alpha - v^\alpha )m \psi_\alpha \cdot \nabla_m g^\alpha
\end{gathered} \label{difference}
\end{equation}
in the classical sense. Let $sgn_{\beta}$ be a smooth, increasing regularization of sign function where $sgn_{\beta} (s) = sign(s)$ for $|s| \ge \beta$, and $|s|_\beta = \int_0 ^s sgn_\beta (r) dr$. By multiplying $|m|^{2k} sgn_\beta ( f^\alpha - g^\alpha )$, where $k \le 2q - 1$, to (\ref{difference}) we have
\begin{equation}
\begin{gathered}
\partial_t \left ( |m|^{2k} |f^\alpha - g^\alpha |_\beta \right ) + u^\alpha \cdot \nabla_x \left ( |m|^{2k} |f^\alpha - g^\alpha |_\beta \right ) + \nabla_x u^\alpha m \psi_\alpha |m|^{2k} \cdot \nabla_m |f^\alpha - g^\alpha |_{\beta} \\
- \nabla_m \cdot ( \nabla_m U \psi_\alpha ) |m|^{2k} (f^\alpha - g^\alpha ) sgn_\beta (f^\alpha - g^\alpha ) -\nu_2 |m|^{2k} sgn_\beta (f^\alpha - g^\alpha ) \Delta_x (f^\alpha - g^\alpha ) \\
- \nabla_m U \psi_\alpha \cdot \nabla_m |f^\alpha -g^\alpha |_\beta |m|^{2k} - \epsilon |m|^{2k} sgn_{\beta} (f^\alpha - g^\alpha ) \Delta_m (f^\alpha - g^\alpha ) \\
= - (u^\alpha - v^\alpha ) \cdot \nabla_x g^\alpha |m|^{2k} sgn_\beta (f^\alpha - g^\alpha)  - \nabla_x (u^\alpha - v^\alpha ) m \psi_\alpha \cdot \nabla_m g^\alpha |m|^{2k} sgn_\beta (f^\alpha - g^\alpha).
\end{gathered}
\end{equation}
Integrating in $m$ variable, we have
\begin{equation}
\begin{gathered}
(\partial_t + u^\alpha \cdot \nabla_x ) \int |m|^{2k} |f^\alpha - g^\alpha|_\beta dm - ( \nabla_x u^\alpha ) : \int \nabla_m (m \psi_\alpha |m|^{2k} ) |f^\alpha -  g^\alpha |_\beta dm \\
= I_1 + I_2 + I_3 + I_4
- \int (u^\alpha - v^\alpha ) \cdot \nabla_x g^\alpha |m|^{2k} sgn_\beta (f^\alpha- g^\alpha ) dm \\
 - \int \nabla_x (u^\alpha - v^\alpha ) m \psi_\alpha \cdot \nabla_m g^\alpha |m|^{2k} sgn_\beta (f^\alpha -g^\alpha ) dm
\end{gathered}
\end{equation}
where 
\begin{equation}
\begin{gathered}
I_1 = \int \nabla_m \cdot (\nabla_m U \psi_\alpha ) |m|^{2k} (f^\alpha - g^\alpha) sgn_\beta (f^\alpha - g^\alpha) dm, \\
I_2 = - \int \nabla_m \cdot (\nabla_m U \psi_\alpha |m|^{2k} ) |f^\alpha - g^\alpha |_\beta dm, \\
I_3 = \epsilon \int |m|^{2k} sgn_\beta (f^\alpha -g^\alpha ) \Delta_m (f^\alpha - g^\alpha ) dm, \\
I_4 = \nu_2 \int |m|^{2k} sgn_\beta (f^\alpha - g^\alpha ) \Delta_x (f^\alpha - g^\alpha ) dm.
\end{gathered}
\end{equation}
Note that
\begin{equation}
\begin{gathered}
\left | \int \nabla_m (m \psi_\alpha |m|^{2k} ) |f^\alpha - g^\alpha|_\beta dm \right | \le C\frac{1}{\alpha} \int |m|^{2k+1} |f^\alpha - g^\alpha | dm + C k  \int |m|^{2k} |f^\alpha - g^\alpha |_\beta dm,
\end{gathered}
\end{equation}
and
\begin{equation}
\begin{gathered}
I_1 + I_2 = \int \nabla_m \cdot (\nabla_m U \psi_\alpha) |m|^{2k} \left ( (f^\alpha - g^\alpha )  sgn_\beta (f^\alpha - g^\alpha ) - |f^\alpha - g^\alpha |_\beta \right ) dm \\
- 2k \int |m|^{2(k-1) } m \cdot \nabla_m U \psi_\alpha |f ^\alpha - g^\alpha |_\beta dm
\end{gathered}
\end{equation}
and the first term, denoted by $J_{\alpha, \beta}$, is bounded pointwise by $$C \left ( \frac{1}{\alpha} \bar{M}_{2(k+q) -1} [f^\alpha + g^\alpha] + \bar{M}_{2(k+q-1)} [f^\alpha + g^\alpha ] \right )$$ and pointwisely converges to 0 as $\beta \rightarrow 0$. On the other hand, the second term is nonpositive. Thus $I_1 + I_2 \le J_{\alpha, \beta}$. On the other hand,
\begin{equation}
\begin{gathered}
I_3 = -\epsilon \int \nabla_m \left  (|m|^{2k} sgn_\beta (f^\alpha - g^\alpha ) \right ) \cdot \nabla_m (f^\alpha - g^\alpha ) dm \\
= - \epsilon \int 2k |m|^{2(k-1)} m \cdot \nabla_m |f^\alpha - g^\alpha |_\beta dm - \epsilon \int |m|^{2k} sgn ' _\beta  (f^\alpha - g^\alpha ) | \nabla_m (f^\alpha - g^\alpha ) |^2 dm \\
\le 2k \epsilon \int \nabla_m \cdot ( |m|^{2(k-1)} m ) |f^\alpha - g^\alpha |_\beta dm,
\end{gathered}
\end{equation}
and finally
\begin{equation}
\begin{gathered}
I_4 = \nu_2 \int \nabla_x \cdot \left ( |m|^{2k} sgn_\beta (f^\alpha -g^\alpha ) \nabla_x (f^\alpha - g^\alpha ) \right ) dm  - \nu_2 \int |m|^{2k} sgn ' _\beta (f^\alpha - g^\alpha ) |\nabla_x (f^\alpha - g^\alpha ) |^2 dm \\
\le \nu_2  \nabla_x \cdot \left ( \int  \left ( \nabla_x ( |m|^{2k} |f^\alpha - g^\alpha |_\beta ) \right ) dm \right ).
\end{gathered}
\end{equation}
Therefore, we have
\begin{equation}
\begin{gathered}
(\partial_t + u^\alpha \cdot \nabla_x ) \left ( \int |m|^{2k} |f^\alpha - g^\alpha|_\beta dm \right ) \\
\le C \norm{\nabla_x u (t)}_{L^\infty _x}  \left ( k   \int |m|^{2k} |f^\alpha - g^\alpha|_\beta dm  + \frac{1}{\alpha}   \int |m|^{2k+1} |f^\alpha - g^\alpha|_\beta dm  \right ) \\
+J_{\alpha, \beta} + C k^2 \epsilon \int |m|^{2(k-1)} |f^\alpha - g^\alpha | _\beta dm + \nu_2 \nabla_x \cdot \int \nabla_x (|m|^{2k} |f^\alpha -g^\alpha |_\beta ) dm \\
+ \norm{(u -v) (t)}_{L^\infty _x} \int |m|^{2k} |\nabla_x g^\alpha | dm + \norm{ \nabla_x (u-v) (t) }_{L^\infty _x } \int |m|^{2k+1} |\nabla_m g^\alpha| dm.
\end{gathered}
\end{equation}
Then we multiply $\int |m|^{2k} |f^\alpha - g^\alpha |_\beta dm$ and integrate in $x$, and divide by $\norm{\int |m|^{2k} |f^\alpha - g^\alpha |_\beta (t) dm }_{L^2 _x } $ we have
\begin{equation}
\begin{gathered}
\frac{d}{dt} \norm{\int |m|^{2k} |f^\alpha - g^\alpha |_\beta (t) dm }_{L^2 _x } \\
\le C ( \norm{\nabla_xu (t) }_{L^\infty  _ x } + 1) \norm{\int |m|^{2k} |f^\alpha - g^\alpha |_\beta (t) dm }_{L^2 _x } + \norm{J_{\alpha, \beta } }_{L^2 _x}  \\
\norm{\int |f^\alpha - g^\alpha |_\beta dm}_{L_x ^2}+ \frac{C\norm{\nabla_xu (t) }_{L^\infty  _ x }}{\alpha} \left ( \norm{\bar{M}_{2k+1} [f^\alpha]}_{L^2 _x} + \norm{\bar{M}_{2k+1} [g^\alpha]}_{L^2 _x} \right )\\
 + \norm { (u-v) (t) }_{L^\infty _x} \left (\int \bar{M}_{4k} [g^\alpha] \left ( \int \frac{|\nabla_x g^\alpha |^2}{g^\alpha} dm\right ) dx \right )^{\frac{1}{2}} \\
+ \norm{ \nabla_x  (u-v) (t) }_{L^\infty _x}  \left (\int \bar{M}_{4k+2} [g^\alpha] \left ( \int \frac{|\nabla_m g^\alpha |^2}{g^\alpha} dm\right ) dx \right )^{\frac{1}{2}}
\end{gathered}
\end{equation}
Since $f^\alpha (0) = g^\alpha (0)$, by Gr\"{o}nwall we have
\begin{equation}
\begin{gathered}
\norm{\int |m|^{2k} |f^\alpha - g^\alpha |_\beta (t) dm }_{L^2 _x } \\ 
\le \exp (C ( \norm{\nabla_x u}_{L^1 (0, T; L^\infty _x )} + 1 ) ) ( \int _0 ^T \norm{J_{\alpha, \beta} } _{L^2} dx    \\
+ \frac{C}{\alpha} \norm{\nabla_x u } _{L^2 (0, T; L^\infty _x ) } \left ( \norm{\bar{M}_{2k+1} [f^\alpha]}_{L^2 (0, T; L^2 _x)} + \norm{\bar{M}_{2k+1} [g^\alpha]}_{L^2 (0, T; L^2 _x)} \right ) + \norm{\int |f^\alpha - g^\alpha |_\beta dm }_{L^1 (0, T; L^2 _x ) } \\
+ \int_0 ^T \norm {u-v (t) }_{L^\infty _x } \norm{ \bar{M}_{4k} [g^\alpha] (t) }_{L^\infty _x } ^{\frac{1}{2}} \left ( \int \int \frac{|\nabla_x g^\alpha |^2}{g^\alpha} dm dx \right ) ^{\frac{1}{2}} dt \\
 \int_0 ^T \norm {\nabla_x (u-v) (t) }_{L^\infty _x } \norm{ \bar{M}_{4k+2} [g^\alpha] (t) }_{L^\infty _x } ^{\frac{1}{2}} \left ( \int \int \frac{|\nabla_m g^\alpha |^2}{g^\alpha} dm dx \right ) ^{\frac{1}{2}} dt )
\end{gathered} \label{difference2}
\end{equation}
and by (\ref{L2H1M}) we have that $\norm{\bar{M}_{2k+1} [f^\alpha] }_{L^\infty _t L^2 _x }  + \norm{\bar{M}_{2k+1} [g^\alpha]}_{L^\infty _t L^2 _x } \le C$ where $C$ depends only on initial data $\mu_0$ and $\nabla_x u, \nabla_x v$, independent of $\alpha$. Also, by (\ref{L2H2qM}), and by Agmon's inequality, we have $\bar{M}_{4k} [g^\alpha], \bar{M}_{4k+2} [g^\alpha] \in L^2 (0, T; L^\infty )$ with bounds depending only on initial data $\mu_0$ and velocity field $v$, again independent of $\alpha$. Also, $g^\alpha$ satisfies the conditions of Theorem \ref{entropycon} : 
\begin{equation}
\begin{gathered}
\int \int |v(x, t) |^2 g^\alpha (x, m, t) dm dx \le \norm{v}_{L^\infty (0, T; L^\infty_x ) } ^2 \norm{M_{0,0}[g^\alpha] }_{L^\infty (0, T; L^1 ) }, \\
\int _0 ^T \int \int |\nabla_x v (x, t) m |^2 g^\alpha (x, m, t) dm dx \le \norm {\nabla_x v }_{L^2 (0, T; L^\infty _x ) } ^2 \norm{\bar{M}_2 [g^\alpha] }_{L^\infty (0, T; L^1 _x ) }, \\
\int \int |\nabla_m U|^2 g^\alpha (x, m, t) dm dx \le C(q) \norm{\bar{M}_{4q-2} [g^\alpha]}_{L^\infty (0, T; L^1 ) }
\end{gathered}
\end{equation}
and since $$\log \left ( \max ( \sqrt{|x|^2 + |m|^2 }, 1 ) \right ) \le \frac{\log 2}{2} + \log \left ( \max ({|x|},1 ) \right ) + \log \left ( \max ({|m|},1 ) \right ), $$
$\log \left ( \max ({|m|},1 ) \right ) ^2 \le C (1+ |m|^2)$ and so $$\int \int \log \left ( \max ({|m|},1 ) \right ) ^2 g^\alpha (t)  dmdx \le C \left ( \norm{M_{0,0}[g^\alpha] }_{L^\infty (0, T; L^1 _x ) } + \norm{\bar{M}_{2}[g^\alpha ] }_{L^\infty (0, T; L^1 _x ) }  \right ),$$ which is bounded by a constant depending only on $u$ and $\mu_0$, and not in $\alpha$, it suffices to bound $$\int \int \log \left ( \max ({|x|},1 ) \right ) ^2  g^\alpha dm dx = \int M_{0,0} [g^\alpha ] \log \left ( \max ({|x|},1 ) \right ) ^2 dx. $$ Let $\Psi(x)$ be a smooth, nonnegative function in $x$ such that $\Psi \ge \log \left ( \max ({|x|},1) \right ) ^2$, $\Psi = \log \left ( \max ({|x|},1 ) \right ) ^2$ for $|x| \ge 2$. Since $M_{0,0} [g^\alpha]$ satisfies 
\begin{equation}
\partial_t M_{0,0} [g^\alpha] + v^\alpha \cdot \nabla_x M_{0,0} [g^\alpha] = \nu_2 \Delta_x M_{0,0} [g^\alpha]
\end{equation}
it can be easily seen that 
\begin{equation}
\begin{gathered}
\int \int \log \left ( \max ({|x|},1 ) \right ) ^2 M_{0,0} [g^\alpha] (t) dx  \\ 
\le C \left ( 1+ \norm{v}_{L^\infty (0, T; L^\infty _x ) } T + \int \log \left ( \max ({|x|},1 ) \right ) ^2 M_{0,0} [\mu_0 ^\alpha ] dx  \right )
\end{gathered}
\end{equation}
but note that $M_{0,0} [\mu_0 ^\alpha] (x) = (g_{\alpha^{-1}} *_x M_{0,0} [\mu_0] ) (x)$, and we have a following simple inequality
\begin{equation}
\log \left ( \max (|x+y|, 1 ) \right ) ^2 \le 4 + 2 \log \left ( \max (|x|, 1 ) \right ) ^2 + 2 \log \left ( \max (|y|, 1 ) \right ) ^2
\end{equation}
so we have
\begin{equation}
\begin{gathered}
\int \Lambda (x) ^2 g_{\alpha^{-1} } *_x M_{0,0} [\mu_0 ] (x) dx \le 4 \norm{M_{0,0} [\mu_0]}_{L^1} + 2 \int \Lambda ^2 M_{0,0} [\mu_0] dx \\
+ 2 \norm{M_{0,0} [\mu_0] }_{L^1} \int g_{\alpha ^{-1} } (x) \Lambda (x) ^2 dx.
\end{gathered}
\end{equation}
However, note that 
\begin{equation}
 \int g_{\alpha ^{-1} } (x) \Lambda (x) ^2 dx = \int_{|x| \ge 1} g_{\alpha ^{-1} } (x) \left ( \log |x| \right )^2 dx
\end{equation}
and if $|x| \ge 1$ and $\alpha \ge 4$, $g_{\alpha^{-1} } (x) \le g_{4} (x)$ so again we can find a bound for $\int \int \log \left ( \max ({|x|},1 ) \right ) ^2 M_{0,0} [g^\alpha] (t) dx$ which depends only in uniform data and $v$, is independent of $\alpha$ (for large enough $\alpha$), and is uniform in $[0, T]$. Also note that our initial condition implies that $\int \int \mu_0 ^\alpha \left | \log \mu_0 ^\alpha \right | dm dx < \infty$. Then by the bound obtained in the proof of Theorem \ref{entropycon}, we conclude that
\begin{equation}
\begin{gathered}
\int_0 ^T \int \int \frac{|\nabla_x g^\alpha|^2 + |\nabla_m g^\alpha| ^2 }{g^\alpha} dm dx dt \\
 \le C ( T \norm{v}_{L^\infty (0, T; L^\infty )} ^2 \norm {M_{0,0}[ \mu_0]}_{L^1} + (\norm{\nabla_x v}_{L^2 (0, T; L^\infty )} + T) \norm{\bar{M}_2 [g^\alpha ]}_{L^\infty (0, T; L^1) } \\
  + \norm{\bar{M}_{4q-2} [g^\alpha] }_{L^\infty (0 ,T; L^1 ) } T  + \norm{v}_{L^\infty (0, T; L^\infty ) } T + 1 + \int \Lambda ^2 M_{0,0} [\mu_0] dx ) \\
+ \int \int \mu_0 ^\alpha \log \mu_0 ^\alpha dm dx.
\end{gathered}
\end{equation}
But note that by Jensen's inequality applied to $\Phi(s) = s \log s $, we have, for each $(x, m)$,
\begin{equation}
\begin{gathered}
\mu_0 ^\alpha \log \mu_0 ^\alpha (x, m) \\
= \int \int \mu_0 (x-y, m-n) g_{\alpha^{-1} } (y, n) \log  \left ( \int \mu_0 (x-y', m-n' ) g_{\alpha^{-1} } (y', n' ) \right ) dn dy \\
= \Phi \left ( \mathbb{E}_{g_{\alpha^{-1} }} [ \mu_0 ( x - \cdot , m-\cdot  )]  \right ) \le \mathbb{E}_{g_{\alpha^{-1} }}  \left [ \Phi (\mu_0 (x - \cdot, m - \cdot )) \right ] \\
= \int \int g_{\alpha^{-1} } (y, n) \Phi ( \mu_0 ( x - y, m - n ) ) dn dy
\end{gathered}
\end{equation}
and therefore
\begin{equation}
\int \int \mu_0 ^\alpha \log \mu_0 ^\alpha dx dm \le \int \int \mu_0 \log \mu_0 dx dm.
\end{equation}
Therefore, by H\"{o}lder's inequality we can bound the last two terms in the (\ref{difference2}) by
\begin{equation}
\begin{gathered}
 \norm{\bar{M}_{4k} [ g^\alpha] + \bar{M}_{4k+2} [g^\alpha]}_{L^2 (0, T; L^\infty _x ) }^{\frac{1}{2} }  \left ( \int_0 ^T \int \int \frac{|\nabla_x g^\alpha|^2 + |\nabla_m g^\alpha| ^2 }{g^\alpha} dm dx dt \right )^{\frac{1}{2} } \norm{u-v}_{L^4 (0, T; W^{1, \infty} ) } \\
\le \norm{u-v}_{L^4 (0, T; W^{1, \infty} ) } C(\norm{v}, \norm{\mu_0}, T)
\end{gathered}
\end{equation}
where $C(\norm{v}, \norm{\mu_0}, T)$ depends only on those three (except for coefficients like $\nu_2, \epsilon$), is increasing in each of the variables, and does not blow up for finite $\norm{v}, \norm{\mu_0}$, or $T$. The term
\begin{equation}
\norm{\int |f^\alpha - g^\alpha|_\beta dm }_{L^1 (0, T; L^2 ) }
\end{equation}
can be bounded in the same way, just plugging in $k=0$ to (\ref{difference2}) and removing the term $\norm{\int |f^\alpha - g^\alpha|_\beta dm }_{L^1 (0, T; L^2 ) }$ in the right side, and since we have $|m|^{2k} |f^\alpha- g^\alpha |, \,\,|m|^{2k} |f^\alpha - g^\alpha |_\beta \le |m|^{2k} (f^\alpha + g^\alpha )$ by taking $\beta \rightarrow 0$ to apply dominated convergence and taking $\alpha \rightarrow \infty$ we have
\begin{equation}
\norm{\int |m|^{2k} |f (t) - g(t) | dm }_{L^2 _x } \le C( \norm{u}, \norm{v}, \norm{\mu_0}, T ) \norm{u - v}_{L^4 (0, T; W^{1, \infty } ) }
\end{equation}
where again $C(\norm{u}, \norm{v}, \norm{\mu_0}, T)$ depends only on those four (except for coefficients like $\nu_2, \epsilon$), is increasing in each of the variables, and does not blow up for finite $\norm{u}, \norm{v}, \norm{\mu_0}$, or $T$. Here $\norm{u} = \norm{u}_{L^\infty (0, T; W^{2,2} ) \cap L^2 (0, T; W^{3,2} ) }$ and similar for $\norm{v}$, and $\norm{\mu_0}$ is a bound for (\ref{initstress}) and (\ref{initentropy}). Let
\begin{equation}
\sigma_1 = \int \nabla_m U \otimes m f dm, \,\, \sigma_2 = \int \nabla_m U \otimes m g dm.
\end{equation} 
Then in the weak sense as in Lemma \ref{momentofmomentsols}, we have
\begin{equation}
\partial_t (\sigma_1 - \sigma_2 ) + u \cdot \nabla_x (\sigma_1 - \sigma_2 ) - \nu_2 \Delta_x (\sigma_1 - \sigma_2 ) = I_1 + I_2
\end{equation}
where
\begin{equation}
\begin{gathered}
I_1 = - (u-v) \cdot \nabla_x \sigma_2 \\
+ 4q (q-1) \int |m|^{2(q-2) } \left ( (\nabla_x u -\nabla_x v ) : m \otimes m \right ) m \otimes m g dm \\
+ 2q \int |m|^{2(q-1) } ( (\nabla_x u - \nabla_x v ) m \otimes m + m \otimes (\nabla_x u - \nabla_x v ) m ) g dm
\end{gathered}
\end{equation}
and
\begin{equation}
\begin{gathered}
I_2 = - (2q)^3 \epsilon \int |m|^{4(q-1)} m \otimes m (f-g) dm \\
+ 4q(q-1) \int |m|^{2(q-2)} ( (\nabla_x u) : m \otimes m ) m \otimes m (f-g) dm \\
+ 2q \int |m|^{2(q-1) } ( (\nabla_x u) m \otimes m + m \otimes (\nabla_x u )m ) (f-g) dm \\
+ 2q\epsilon \left ( 4q (q-1) \int |m|^{2(q-2) } m \otimes m (f-g) dm + 4 \int |m|^{2(q-1) } \mathbb{I} (f-g) dm \right )  .
\end{gathered}
\end{equation}
Then we see that 
\begin{equation}
\norm{I_1 (t) }_{L^2} + \norm{I_2 (t) }_{L^2} \le C(\norm{\mu_0 } , \norm{u}, \norm{v}, T ) \norm{u-v}_{L^4 (0, T; W^{1, \infty } ) }.
\end{equation}
Therefore, by multiplying $\sigma_1 - \sigma_2$ and integrating in $x$ variables, and using $\sigma_1 (0) = \sigma_2 (0)$ we have
\begin{equation}
\sup_{0 \le t \le T} \norm {\sigma_1 (t) - \sigma_2 (t) }_{L^2} ^2 + \nu_2 \int _0 ^T \norm {\nabla_x (\sigma_1 - \sigma_2 ) } _{L^2 } ^2 dt \le C T \norm{u-v}_{L^4 (0, T; W^{1, \infty } ) } ^2 .
\end{equation}
Also, multiplying $- \Delta_x (\sigma_1 - \sigma_2)$ and integrating in $x$ variable we get
\begin{equation}
\sup_{0 \le t \le T} \norm { \nabla_x (\sigma_1 (t) - \sigma_2 (t) ) }_{L^2} ^2 + \nu_2 \int _0 ^T \norm {\Delta_x (\sigma_1 - \sigma_2 ) } _{L^2 } ^2 dt \le C T \norm{u-v}_{L^4 (0, T; W^{1, \infty } ) } ^2 .
\end{equation}
In conclusion, we have
\begin{equation}
\begin{gathered}
\norm{\sigma_1 - \sigma_2 } _{L^\infty (0, T; W^{1,2}) \cap L^2 (0, T; W^{2,2} ) } \\
 \le C(\nu_2, \norm{u}, \norm{v}, \norm{\mu_0 }, T )\sqrt{T} \norm{u -v }_{L^\infty (0, T; \mathbb{P}W^{2,2} ) \cap L^2 (0, T; \mathbb{P}W^{3,2} ) }
\end{gathered} \label{differenceestimate}
\end{equation}
again $C(\nu_2, \norm{u}, \norm{v}, \norm{\mu_0 }, T ) $ has the same property as before, and $C \rightarrow \infty$ as $\nu_2 \rightarrow 0$.
\begin{remark}
If we assume the initial data $\mu_0$ for $f$, and the initial data $\nu_0$ for $g$ do not coincide, then previous arguments give the following modification of (\ref{differenceestimate}):
\begin{equation}
\begin{gathered}
\norm{\sigma_1 - \sigma_2 } _{L^\infty (0, T; W^{1,2}) \cap L^2 (0, T; W^{2,2} ) } \\
 \le C \norm{\sigma_1 (0) -\sigma_2 (0)}_{W^{1,2} } + C \sqrt{T} \left ( \norm{u -v }_{L^\infty (0, T; \mathbb{P}W^{2,2} ) \cap L^2 (0, T; \mathbb{P}W^{3,2} ) } +  \norm{\bar{M}_0 [\mu_0 - \nu_0 ] + \bar{M}_{4q-2} [\mu_0 - \nu_0 ] }_{L^2} \right ).
\end{gathered} \label{diffdiffinitialdata}
\end{equation}
For any $k \ge 0$, the term $\bar{M}_{2k} [\mu_0 - \nu_0 ] $ cannot be controlled by $\bar{M}_{2k} [\mu_0] - \bar{M}_{2k} [\nu_0]$. However, this term is unavoidable; it is possible that $\bar{M}_{2k} [\mu_0] = \bar{M}_{2k} [\nu_0]$ while $\mu_0 \ne \nu_0$.
\end{remark}
Therefore, we have proved Theorem \ref{momentsolutionexists}.
\begin{remark}
As mentioned before, the condition (\ref{initmoment}) can be dropped in proving local and global well-posedness of the coupled system: we can only assume (\ref{velinitcond}), (\ref{initpositivity}), (\ref{initstress}), and that $\norm{\vec{M}_{16q} [\mu_0 ] }_{L^2 _x } < \infty$ to show that there exists a unique weak solution to the Fokker-Planck equation (\ref{FPwithu}), satisfying all the conditions fo the definition for the moment solution except for third one, and satisfying bounds (\ref{L2H1M}), (\ref{L2H1M2}),  (\ref{L2H2qM}), and (\ref{LinfL1M}). Also, note that (\ref{initentropy}) is used only for the estimate (\ref{differenceestimate}), which is used in proving local existence of the coupled system. 
\end{remark}
\begin{remark}
In the condition (\ref{initentropy}), the condition $\int_{\mathbb{R}^2} |\Lambda(x) |^2 M_{0,0} [f_0] (x) dx <\infty$, which controls the growth of $f_0$ at infinity, is introduced in many kinetic models, for example, Boltzmann equation (\cite{MR1014927}). Although the physical interpretation of the above condition is not evident, that condition guarantees us that the entropy $\int f \log f dx$ remains greater than $-\infty$. Here is an example showing that if we do not have such restriction,  our solution starts with finite entropy but fall into $- \infty$ entropy after some time. Suppose that we are solving 1-dimensional heat equation $\partial_t f = \partial_x ^2 f$ in the whole line, and let the initial data be 
\begin{equation}
f_0 (x) = \sum_{n=1} ^\infty \mathrm{1}_{(10n - a_n , 10n + a_n ) } (x)
\end{equation}
where
\begin{equation}
a_n = \frac{c}{(n+1) \left ( \log (n+1) \right )^2 }
\end{equation}
where $c$ is chosen that $\sum_{n=1} ^\infty a_n \le 1$ and $a_n < \frac{1}{2} $ for all $n$. Let 
\begin{equation}
\Phi (s) = s \log s, \,\, g_r (x) = \frac{1}{\sqrt{4 \pi r } } e^{-\frac{x^2}{4r} }.
\end{equation}
Then $\int_{\mathbb{R}} \Phi (f_0) dx = 0$, since $\Phi (f_0) (x) = 0$ for all $x$. Then 
\begin{equation}
f(x, t) = \sum_{n=1} ^\infty g_t * \mathrm{1}_{(10n - a_n, 10n + a_n ) } (x)
\end{equation}
and we see that $\norm{f(t) }_{L^\infty} \le \norm{f_0}_{L^1} \norm{g_t}_{L^\infty} < \frac{1}{4 \sqrt{\pi t}} \le \frac{1}{2}$ for all $t>1 $ and $f(x, t) \ge 0$ for all $(x, t)$. For $t = 1$, if $|x - 10n | < t$, we see that 
\begin{equation}
\frac{1}{2} \ge f(x, t) \ge g_t * \mathrm{1}_{(10n - a_n , 10n + a_n ) } (x) \ge \frac{a_n} { e \sqrt{\pi}} = \frac{a_n}{\sqrt{\pi t } } e^{-t}.
\end{equation}
Then since $\Phi (s) $ is decreasing for $0 \le s \le \frac{1}{2}$, $\Phi(f(x, t) ) \le \Phi (\frac{a_n}{e \sqrt{\pi}} ) = \frac{a_n}{e \sqrt{\pi}} \log a_n - a_n \log (e \sqrt{\pi}).$ Then
\begin{equation}
\int_{\mathbb{R}} \Phi (f(x, t) ) dx \le \sum_{n=1} ^\infty \int_{(10n - t, 10n + t ) }\Phi (f(x, t) ) dx \le 2 \sum_{n=1} ^\infty a_n \log a_n - 2 C = - \infty.
\end{equation}
Therefore, although $f_0$ started with zero entropy, $f(t)$ has $-\infty$ entropy at $t = 1$. Same argument shows that $f(t)$ has $-\infty$ entropy for all $t > 1$.
\end{remark}

\section{Local and global well-posedness of the coupled system} \label{wellposedness}

\subsection{Local well-posedness} \label{lwp}
Using the results in section \ref{Solsch}, we can prove the local existence of the system. We define the function space $\mathcal{X}$ as 
\begin{equation}
\mathcal{X} = L^\infty (0, T; \mathbb{P} W^{2,2} ) \cap L^2 (0, T; \mathbb{P} W^{3,2} ).
\end{equation}
For the subspace of $\mathcal{X}$ defined by
\begin{equation}
\tilde{\mathcal{X}} = \{ u \in \mathcal{X} \, : \, \partial_t u \in L^\infty (0, T; \nabla_x L^1 + L^2) \cap L^2 (0, T; \mathbb{P} W^{1,2} ) \}
\end{equation}
by Theorem \ref{momentsolutionexists} we know that there exists a unique moment solution to the Fokker-Planck equation (\ref{FPwithu}), denoted by $\mu$. Then we define
\begin{equation}
\sigma [u] = \int_{\mathbb{R}^2} m \otimes \nabla_m U \mu(dm).
\end{equation}
We set up a fixed point equation $u = F(u)$ in $\tilde{\mathcal{X}}$. We establish a contraction mapping in $\mathcal{X}$ and observe that if $u \in \tilde{\mathcal{X}}$ then $F(u) \in \tilde{\mathcal{X}}$ too. Following \cite{MR2989441}, our $F$ is defined as
\begin{equation}
F(u) = e^{\nu_1 t \Delta_x } u_0 + Q_1 (u,u) + L_1 (\sigma)
\end{equation}
where 
\begin{equation}
Q_1 (u, v) = -\int_0 ^t e^{\nu_1 (t-s) \Delta_x } \mathbb{P} (u(s) \cdot \nabla_x v(s) ) ds
\end{equation}
and
\begin{equation}
L_1 (\sigma) = K \int_0 ^t e^{\nu_! (t-s) \Delta_x } \mathbb{P} \left ( \nabla_x \cdot \sigma (s) \right ) ds. 
\end{equation}
We check that
\begin{equation}
\begin{gathered}
\norm{Q_1 (u, v) }_{\mathcal{X}} \le \delta \norm{u}_{\mathcal{X}} \norm{v}_{\mathcal{X}}, \\
\norm{L_1 (\sigma)}_{\mathcal{X}} \le C_1 \norm{\sigma}_{L^\infty (0, T; W^{1,2}) \cap L^2 (0, T; W^{2,2} ) }, \\
\norm{\sigma}_{L^\infty (0, T; W^{1,2}) \cap L^2 (0, T; W^{2,2} )}   \le C_2 C_3 ^{ \delta \norm{u}_{\mathcal{X}} ^2 }, \\
\end{gathered} \label{operatorbounds}
\end{equation}
where $\delta$ can be made as small as we want by making $T$ small.The first and second one can be found in \cite{MR2989441}, and the third one is a direct consequence of (\ref{L2H1M}). Using (\ref{operatorbounds}) we can find $A$ and $\delta$ (so we adjust $T$ too) such that if $\norm{u}_{\mathcal{X}} \le A$, then $\norm{F(u)}_{\mathcal{X}} \le A$. If $\norm{u}_{\mathcal{X}} \le A$, then we have
\begin{equation}
\norm{F(u)}_{\mathcal{X} } \le A_0 + \delta A^2 + C_1 C_2 C_3 ^{\delta A^2 },
\end{equation}
where $A_0$ depends only on initial data and $C_1, C_2, C_3$ are independent of $A$. For example, we can put $A = A_0 + 1 + C_1 C_2 C_3$ and choose $\delta$ small enough so that $\delta A^2 < 1$.  Also, by (\ref{differenceestimate}) we have
\begin{equation}
\norm{\sigma[u] - \sigma[v] }_{L^\infty (0, T; W^{1,2}) \cap L^2 (0, T; W^{2,2} )} \le C_4 \delta \norm{u-v}_{\mathcal{X} }
\end{equation} 
where $C_4 = C_4( A, A_0)$. Then 
\begin{equation}
\begin{gathered}
\norm{F(u) - F(v) }_{\mathcal{X}} \\
\le \norm{Q_1 (u, u-v ) }_{\mathcal{X}} + \norm{Q_1 (u-v, v)}_{\mathcal{X}} + \norm{L_1 (\sigma[u] - \sigma[v] ) }_{\mathcal{X}} \\
\le \delta (2A + C_1 C_4 ) \norm{u - v}_{\mathcal{X}}.
\end{gathered}
\end{equation}
Therefore, by choosing $\delta$ small enough again, we see that the sequence $u^{n+1} = F(u^n)$, $u^1$ be the solution of Navier-Stokes equation with initial data $u_0$ converges exponentially to the unique fixed point. Therefore, we have proved the following.
\begin{theorem}
Given $u_0 \in \mathbb{P}W^{2,2} $, $\mu_0$ satisfying (\ref{initpositivity}), (\ref{initmoment}), (\ref{initstress}), and (\ref{initentropy}), there is a $T_0 > 0$ such that there is a unique solution $(u, f)$ to (\ref{system}) for $t \in (0, T_0 )$ satisfying (\ref{velinitcond}) and $f$ is the unique moment solution of the Fokker-Planck equation with velocity field $u$. \label{localexistence}
\end{theorem}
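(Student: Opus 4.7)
The plan is a Banach fixed-point argument in the space $\mathcal{X} = L^\infty(0,T;\mathbb{P}W^{2,2}) \cap L^2(0,T;\mathbb{P}W^{3,2})$, following the Duhamel formulation used in \cite{MR2989441}. For each $u$ in a suitable closed ball of $\mathcal{X}$, I would first verify that $u$ satisfies the regularity requirement (\ref{velinitcond}) on the restricted subspace $\tilde{\mathcal{X}}$, then invoke Theorem \ref{momentsolutionexists} to produce the unique moment solution $\mu = \mu[u]$ of the Fokker-Planck equation driven by $u$, and set $\sigma[u] := \int_{\mathbb{R}^2} m \otimes \nabla_m U\, \mu(dm)$. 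The map to be iterated is
\begin{equation*}
F(u) = e^{\nu_1 t \Delta_x} u_0 + Q_1(u,u) + L_1(\sigma[u]),
\end{equation*}
with $Q_1(u,v) = -\int_0^t e^{\nu_1(t-s)\Delta_x}\mathbb{P}(u\cdot\nabla_x v)\,ds$ and $L_1(\sigma) = K\int_0^t e^{\nu_1(t-s)\Delta_x}\mathbb{P}(\nabla_x\cdot\sigma)\,ds$.

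The core step is to establish the three estimates already listed in (\ref{operatorbounds}). The bilinear bound $\|Q_1(u,v)\|_{\mathcal{X}} \le \delta \|u\|_{\mathcal{X}} \|v\|_{\mathcal{X}}$ and the linear bound $\|L_1(\sigma)\|_{\mathcal{X}} \le C_1 \|\sigma\|_{L^\infty(0,T;W^{1,2})\cap L^2(0,T;W^{2,2})}$ are standard heat-semigroup estimates on $\mathbb{R}^2$, where the smallness constant $\delta = \delta(T)$ comes from the short-time integrability of $(t-s)^{-1/2}$. The crucial input from the Fokker-Planck side is the stress bound $\|\sigma[u]\|_{L^\infty(0,T;W^{1,2})\cap L^2(0,T;W^{2,2})} \le C_2 C_3^{\delta \|u\|_{\mathcal{X}}^2}$, which I would read off directly from estimate (\ref{L2H2qM}) on the moments $M_{a,b}$ of $\mu[u]$ with $a+b\le 8q-2$: the monomials $m\otimes\nabla_m U$ generate components $M_{a,b}$ with $a+b = 2q$, and the exponential in (\ref{L2H2qM}) is controlled in terms of $\|u\|_{\mathcal{X}}$. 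Combining these and absorbing $\delta A^2$ by choosing $T$ small, one finds a ball of radius $A = A_0 + 1 + C_1 C_2 C_3$ (depending only on the initial data) that $F$ sends into itself.

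For contraction, the key input is the continuity estimate (\ref{differenceestimate}) from Theorem \ref{momentsolutionexists}, which requires exactly the finite-entropy condition (\ref{initentropy}) on $\mu_0$. This provides
\begin{equation*}
\|\sigma[u] - \sigma[v]\|_{L^\infty(0,T;W^{1,2})\cap L^2(0,T;W^{2,2})} \le C_4(A,\|\mu_0\|)\sqrt{T}\, \|u-v\|_{\mathcal{X}},
\end{equation*}
so that $\|F(u)-F(v)\|_{\mathcal{X}} \le \delta(2A + C_1 C_4)\|u-v\|_{\mathcal{X}}$. A further shrinking of $T$ makes this coefficient smaller than $1$, and the Banach fixed-point theorem yields a unique $u \in \mathcal{X}$ with $u = F(u)$; its associated moment solution $f = \mu[u]$ then gives the sought pair $(u,f)$ solving (\ref{system}) on $(0,T_0)$.

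The main obstacle, and the reason the argument is nontrivial, is precisely the Lipschitz dependence of $\sigma[u]$ on $u$ in a norm strong enough to close the fixed-point scheme. A priori the microscopic equation only provides weak control of the stress, and the natural energy method for the difference $f-g$ produces derivatives $\nabla_m g$ that cannot be integrated by parts away without losing decay in $m$. The workaround, already carried out in Section \ref{Flfielddep}, is to absorb those $\nabla_m g$ terms into $\sqrt{g}\cdot(\nabla_m g/\sqrt{g})$ and use the logarithmic-gradient bound from Theorem \ref{entropycon}; this is where the finite-entropy hypothesis (\ref{initentropy}) is unavoidable. Once (\ref{differenceestimate}) is in hand, the remainder of the proof is a routine continuity-method / short-time contraction argument, and the verification that the fixed point $u$ indeed lies in $\tilde{\mathcal{X}}$ (so that Theorem \ref{momentsolutionexists} applies to the final $u$) follows from reading off $\partial_t u$ from the Duhamel formula and the regularity of $\sigma[u]$.
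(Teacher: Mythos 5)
Your proposal is correct and follows essentially the same fixed-point scheme as the paper's Section \ref{lwp}: the same space $\mathcal{X}$, the same Duhamel map $F = e^{\nu_1 t\Delta_x}u_0 + Q_1(u,u) + L_1(\sigma[u])$ borrowed from \cite{MR2989441}, the same three operator bounds (\ref{operatorbounds}), and the same use of (\ref{differenceestimate}) for the contraction estimate. One small remark: you trace the third bound in (\ref{operatorbounds}), namely $\norm{\sigma[u]}_{L^\infty W^{1,2}\cap L^2 W^{2,2}} \le C_2 C_3^{\delta\norm{u}_{\mathcal{X}}^2}$, to (\ref{L2H2qM}), which is indeed the estimate that controls the $W^{1,2}/W^{2,2}$ regularity of the moments up to degree $8q-2$ and hence of $\sigma$; the paper cites (\ref{L2H1M}) at this point, which only gives $L^\infty L^2 \cap L^2 W^{1,2}$ control, so your reference is the more precise one.
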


\subsection{Global well-posedness} \label{gwp}
From this point, we investigate the global existence: we need to establish the bound
\begin{equation}
\begin{gathered}
\frac{1}{2} \norm{u}_{L^\infty (0, T; L^2 ) } ^2 + \sup_{0 \le t \le T} \frac{K}{2q(2q-1) } \norm{\sigma(t)}_{L^1} + \nu_1 \norm{\nabla_x u}_{L^2 (0, T; L^2 )} ^2 \\
\le A(\epsilon, q) \norm{M_{0,0} }[\mu_0 ] T + \frac{1}{2} \norm{u_0}_{L^2} + \frac{K}{2q(2q-1)} \norm{\sigma_0}_{L^1} =B_1 (T).
\end{gathered} \label{uL2sigmaL1}
\end{equation}
Here $B_1 (T)$ depends only on initial data and $T$. For this we come back to our approximating sequence $f^\alpha$: by multiplying $u$ to the first equation of (\ref{system}) and adding $C = \frac{K}{2q(2q-1)}$ times of (\ref{alpharadmoments}), and using the pointwise estimate $|m|^{2(q-1)} \le A + |m|^{4q-2}$ then integrating we obtain
\begin{equation}
\begin{gathered}
\frac{d}{dt} \left ( \frac{1}{2}  \norm{u(t) }_{L^2} ^2 + C \int \bar{M}_{2q} ^\alpha (t) dx \right ) + \nu_1 \norm{\nabla_x u (t) }_{L^2} ^2 \le CA \norm{M_{0,0}[\mu_0]}_{L^1} \\
+ \int \int |m|^{4q-2} (1 - \psi_\alpha ) f^\alpha dm dx  \\
+ \int \mathrm{Tr} \left ( \nabla_x u^\alpha \int |m|^{2(q-1)} m \otimes m \psi_\alpha f^\alpha dm - \nabla_x u \int |m|^{2(q-1)} m\otimes m f dm \right  ) dx.
\end{gathered}
\end{equation}
Then we have
\begin{equation}
\begin{gathered}
\norm{u}_{L^\infty (0, T; L^2)}^2 + C \norm{\bar{M}_{2q} ^\alpha}_{L^\infty (0, T; L^1) } + 2 \nu_1 \norm{\nabla_x u} ^2 _{L^2(0, T; L^2)} \le \norm{u_0}_{L^2} ^2 + C \norm{\bar{M}_{2q} ^\alpha (0)}_{L^1} + A T \norm{M_{0,0} [\mu_0]}_{L^1} \\
+ I_1 + I_2 + I_3 + I_4,
\end{gathered}
\end{equation}
where
\begin{equation}
\begin{gathered}
I_1 = \int _0 ^T \int \int |m|^{4q - 2} (1 - \psi_\alpha ) f^\alpha dm dx dt, \\
I_2 = \int_0 ^T \int \mathrm{Tr} \left ( (\nabla_x u^\alpha - \nabla_x u ) \int |m|^{2(q-1)} m \otimes m \psi_\alpha f^\alpha dm \right ) dx dt, \\
I_3 = \int_0 ^T \int \mathrm{Tr} \left ( \nabla_x u \int |m|^{2(q-1)} m \otimes m (\psi_\alpha - 1) f^\alpha dm \right ) dx dt, \\
I_4 = \int_0 ^T \int \mathrm{Tr} \left ( \nabla_x u \left ( \int |m|^{2(q-1)} m \otimes m f^\alpha dm - \int |m|^{2(q-1)} m \otimes m f dm \right ) \right ) dx dt.
\end{gathered}
\end{equation}
First we note that $\lim_{\alpha \rightarrow \infty} \norm{\bar{M}_{2q} ^\alpha}_{L^\infty (0, T; L^1)} \ge \norm{\bar{M}_{2q} [f] }_{L^\infty (0, T; L^1)}$. Then we note that for $k < 2q$
\begin{equation}
\int |m|^{2k} (1 - \psi_\alpha) f^\alpha dm \le \int_{|m| \ge \alpha} |m|^{2k} \left ( \frac{|m|}{\alpha} \right )^{4q - 2k} f^\alpha dm \le \frac{1}{\alpha} \int |m|^{4q} f^\alpha dm.
\end{equation}
Then we also note that $\int |m|^{4q} f^\alpha dm$ is uniformly bounded, say by $C$, in $L^\infty (0, T; L^1_ x ) $ by (\ref{LinfL1}). Therefore, we have $\lim_{\alpha} I_1 = \lim_{\alpha} I_3 = 0$. Then we note that $M_{a,b} ^\alpha  $ converges to $M_{a,b} [f]$ in weak* topology of $L^2 (0, T; L^2)$. Since $\nabla_x u \in L^2 (0, T; L^2)$, we see that $I_4 \rightarrow 0$ as $\alpha \rightarrow \infty$. Finally, we note that $\int |m|^{2q} \psi_\alpha f^\alpha dm$ is uniformly bounded in $L^\infty (0, T; L^2)$. Also, for each $t$, $\norm{\nabla_x u^\alpha (t) - \nabla_x u (t)}_{L^2} \rightarrow 0$ as $\alpha \rightarrow \infty$, so by dominated convergence in $t$ variable, we conclude that $\norm{\nabla_x u^\alpha - \nabla_x u}_{L^1 (0, T; L^2 ) } \rightarrow 0$. Therefore, $\lim_\alpha I_2 = 0$. In conclusion, we have
\begin{equation}
\begin{gathered}
\norm{u}_{L^\infty (0, T; L^2)}^2 + C \norm{\mathrm{Tr} \sigma}_{L^\infty (0, T; L^1) } + 2 \nu_1 \norm{\nabla_x u}_{L^2(0, T; L^2)} \le A T \norm{M_{0,0} [\mu_0]}_{L^1} ,
\end{gathered}
\end{equation}
and since $|\sigma_{12}| \le \frac{1}{2} \mathrm{Tr} (\sigma)$ we obtain (\ref{uL2sigmaL1}). From (\ref{L2H1M2}) we see that
\begin{equation}
\norm{\sigma}_{L^\infty (0, T; L^2 )} ^2 + \nu_2 \norm{\nabla_x \sigma }_{L^2 (0, T; L^2 ) } ^2 \le B_2 (T) \label{B2_1}
\end{equation}
where again $B_2 (T) = C(q)^{T + B_1(T)} \norm{\bar{M}[\mu_0]_{2q}}_{L^2} $ depends only on initial data and $T$. Then we take curl to the first equation of the (\ref{system}) to get vorticity equation: for $\omega = \nabla_x ^\perp \cdot u$
\begin{equation}
\partial_t \omega + u \cdot \nabla_x \omega = \nu_1 \Delta_x \omega +  K \nabla_x ^\perp \cdot \nabla_x \cdot \sigma. \label{vorticity}
\end{equation}
Multiplying $\omega$ to (\ref{vorticity}) and integrating, we obtain
\begin{equation}
\norm{\omega}_{L^\infty(0, T; L^2) } ^2 + \nu_1 \norm{\nabla_x \omega}_{L^2 (0, T; L^2) } \le C(\nu_1) \norm{\nabla_x \sigma}_{L^2(0, T; L^2) } ^2 = C B_2 (T). \label{B2_2}
\end{equation}
Then by (\ref{L2H2qM}) we have 
\begin{equation}
\norm{\sigma}_{L^\infty (0, T; W^{1,2} ) } ^2 + \nu_2 \norm{\sigma}_{L^2  (0, T; W^{2,2} ) } ^2 \le B_3 (T) \label{B3}
\end{equation}
where $B_3 (T) =  C(\epsilon, \nu_2, q, K) ^{T + CB_2 (T) T + B_2 (T)  \sqrt{T} }$ again depends only on initial data and $T$. Finally, by multiplying $- \Delta_x \omega$ to (\ref{vorticity}) and integrating, we have
\begin{equation}
\begin{gathered}
\norm{\nabla_x \omega}_{L^\infty (0, T; L^2) } ^2 + \norm{\Delta_x \omega}_{L^2 (0, T ; L^2 ) } ^2 \le \exp \left (C \int_0 ^T \norm{u(t)}_{L^2} ^2 \norm{\omega (t) }_{L^2} ^2 dt \right ) \\
\left ( \norm{\nabla_x \omega(0)}_{L^2} ^2 + C(K, \nu_1) \norm{\Delta_x \sigma}_{L^2 (0, T; L^2)} ^2 \right ) \\
\le \exp \left (C B_1 (T) B_2 (T) \right ) \left ( \norm{\nabla_x \omega(0)}_{L^2} ^2 + C(K, \nu_1) B_3 (T) \right ) =B_4 (T).
\end{gathered} \label{B4}
\end{equation}
Therefore, we see that
\begin{equation}
\norm{u}_{\mathcal{X} } \le B_1 + CB_2 + B_4 = B_5, \label{B5}
\end{equation}
which only depends on initial data and $T$. Thus, we have the global existence, following the proof of \cite{MR2989441}. Theorem \ref{localexistence} guarantees that there is $T_0 > 0$ such that the solution exists for $[0, T_0]$. We consider the maximal interval of existence: $T_1 = \sup T_0 \le T$ such that the solution exists for $[0, T_0]$. Then it must be that $T_1 = T$, because otherwise we could extend the solution beyond $T_1$.
\begin{theorem}
Given $u_0 \in \mathbb{P}W^{2,2} $, $\mu_0$ satisfying (\ref{initpositivity}), (\ref{initmoment}), (\ref{initstress}), and (\ref{initentropy}), and arbitrary $T > 0$, there is a unique solution $(u, f)$ to (\ref{system}) for $t \in (0, T )$ satisfying (\ref{velinitcond}) and $f$ is the unique moment solution of the Fokker-Planck equation with velocity field $u$. In addition, the bounds (\ref{uL2sigmaL1}), (\ref{B2_1}), (\ref{B2_2}), (\ref{B3}), (\ref{B4}), and (\ref{B5}) are satisfied.
\label{globalexistence}
\end{theorem}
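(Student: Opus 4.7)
The plan is to couple the local existence result (Theorem \ref{localexistence}) with an a priori estimate that controls $\norm{u}_{\mathcal{X}}$ by a quantity depending only on the initial data and on $T$, and then close by a standard continuation argument. All bounds will be derived on the solution that we already know exists on some maximal interval $[0, T_1) \subseteq [0, T]$; the uniform bounds then force $T_1 = T$.

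The central step is the ``free energy--like'' identity (\ref{uL2sigmaL1}), which is not a formal consequence of the Fokker-Planck framework alone because it involves the $L^1$ norm of $\sigma$, and the moment solution only gives $L^\infty(0,T;L^p)$ for $1<p\le 2$ naturally. The strategy is to return to the approximating sequence $f^\alpha$ constructed in section \ref{Appsol}. I would test the Navier-Stokes equation against $u$ and simultaneously add $C = \frac{K}{2q(2q-1)}$ times the equation (\ref{alpharadmoments}) for $\bar{M}_{2q}^\alpha$, integrated in $x$. The key algebraic point is that the term $\mathrm{Tr}\bigl((\nabla_x u)\int |m|^{2(q-1)}m\otimes m \, f^\alpha \psi_\alpha\, dm\bigr)$ arising from the moment equation cancels (up to truncation error) the stress coupling $K\int u\cdot\nabla_x\cdot\sigma\, dx$ after integration by parts, once $\alpha\to\infty$ and $\psi_\alpha\to 1$. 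I would treat the error terms $I_1,\dots,I_4$ via the uniform $L^\infty L^1$ bound (\ref{LinfL1}) on $\bar{M}_{4q}^\alpha$ (so the truncation tail is $O(1/\alpha)$), weak-$*$ convergence of $M_{a,b}^\alpha\to M_{a,b}$ in $L^2(0,T;L^2)$ against $\nabla_x u\in L^2(0,T;L^2)$, and strong $L^1(0,T;L^2)$ convergence of $\nabla_x u^\alpha\to\nabla_x u$ combined with uniform $L^\infty L^2$ bounds on the relevant moments. Lower semicontinuity of the $L^\infty L^1$ norm under the $\alpha\to\infty$ limit then yields (\ref{uL2sigmaL1}).

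Once (\ref{uL2sigmaL1}) is in hand, the rest is a cascade of linear regularity upgrades. I would first feed the resulting bound on $\nabla_x u$ in $L^2(0,T;L^2)$ into estimate (\ref{L2H1M2}) for the moment solution, which is sharp precisely because it depends on $\norm{\nabla_x u}_{L^2(0,T;L^2)}$ rather than the $L^\infty L^2$ norm; this produces $B_2(T)$ and (\ref{B2_1}). Next I would take the curl of the Navier-Stokes equation (\ref{vorticity}) and test with $\omega$ to get (\ref{B2_2}), and then feed (\ref{B2_2}) back into the higher moment bound (\ref{L2H2qM}) to obtain (\ref{B3}). Testing the vorticity equation against $-\Delta_x\omega$, controlling the nonlinear term by the two-dimensional bound $\int|u||\nabla_x\omega|^2\le C\norm{u}_{L^2}\norm{\omega}_{L^2}\norm{\nabla_x\omega}_{L^2}^2$ and absorbing via Gr\"onwall with the right-hand side $K\nabla_x^\perp\cdot\nabla_x\cdot\sigma$ bounded by (\ref{B3}), yields (\ref{B4}); combining with (\ref{B1}) through elliptic regularity for $u$ in terms of $\omega$ gives (\ref{B5}).

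The hard part is the first step. Each of the subsequent upgrades is a standard 2D Navier-Stokes / parabolic estimate of the type worked out in \cite{MR2989441}, and the moment-solution machinery of section \ref{Solsch} has been calibrated to supply exactly the bounds needed at each stage. By contrast, (\ref{uL2sigmaL1}) is the only place where the distinction between weak/moment solutions and classical solutions genuinely bites: integration against $u\cdot\nabla_x\sigma$ and against $|m|^{2q}$ simultaneously is not directly legitimate at the level of the limiting moment solution, and the cancellation that kills the ``bad'' term $\nabla_x u:\int m\otimes m\nabla_m U f$ must be established at the regularized level $f^\alpha$ and then passed to the limit. Once (\ref{uL2sigmaL1}) is established, uniqueness follows from Theorem \ref{localexistence} applied at any interior time, and the continuation argument closes the proof: the maximal interval $[0,T_1)$ cannot end before $T$ because all quantities appearing in $\norm{u}_{\mathcal{X}}$ and in the initial-data conditions for the moment solution remain bounded by $B_5(T_1)<\infty$ up to $T_1$.
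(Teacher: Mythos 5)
Your proposal matches the paper's proof essentially step for step: return to the regularized system $f^\alpha$, couple the Navier--Stokes energy identity with $C=\frac{K}{2q(2q-1)}$ times the $\bar{M}_{2q}^\alpha$ equation, control the truncation/mollification errors $I_1,\dots,I_4$ via the uniform $L^\infty L^1$ bound, weak-$*$ convergence, and $L^1(0,T;L^2)$ convergence of $\nabla_x u^\alpha$, then cascade through (\ref{L2H1M2}), the vorticity estimate, (\ref{L2H2qM}), and the $-\Delta_x\omega$ test to reach (\ref{B5}), closing with a continuation argument. The only cosmetic slip is referring to a label (\ref{B1}) that does not exist in the paper (the constant $B_1(T)$ is defined inside (\ref{uL2sigmaL1})), but the intended meaning is clear.
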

\begin{remark}
In fact, local Lipschitz dependence of solution on the initial data can be proved with similar standard energy estimates in this subsection, together with (\ref{diffdiffinitialdata}). That is, if $u_0, v_0 \in \mathbb{P}W^{2,2}$ and $\mu_0, \nu_0$ satisfy (\ref{initpositivity}), (\ref{initmoment}), (\ref{initstress}), and (\ref{initentropy}), then 
\begin{equation}
\norm{u-v}_{\mathcal{X}} \le C (u_0, v_0, \mu_0, \nu_0 , T ) \left ( \norm{u_0 - v_0 }_{\mathbb{P}W^{2,2} } + \norm{\sigma[\mu_0] - \sigma[\nu_0 ] }_{W^{1,2} } + \sum_{k=0} ^{2q-1} \norm{\bar{M}_{2k} [\mu_0 - \nu_0 ] }_{L^2} \right ).
\end{equation}
\end{remark}
\begin{corollary}
Suppose that $q=1$ in the system (\ref{system}), in other words, $U(m) = |m|^2$. Suppose that the initial data $u_0, \mu_0$ satisfies conditions $u_0 \in \mathbb{P}W^{2,2}$, (\ref{initpositivity}), (\ref{initmoment}), (\ref{initstress}), and (\ref{initentropy}), and (\ref{initdenentropy}). Then $$(u, \sigma, \rho) = (u, \int m \otimes \nabla_m U f dm, M_{0,0} [f])$$ is the unique strong solution for the diffusive Oldroyd-B equation
\begin{equation}
\begin{gathered}
\partial_t u + u \cdot \nabla_x u = - \nabla_x p +  \nu_1 \Delta_x u + K \nabla_x \cdot \sigma, \\
\nabla_x \cdot u =0, \\
\partial_t \sigma + u \cdot \nabla_x \sigma = (\nabla_x u ) \sigma + \sigma (\nabla_x u )^T - 2 \epsilon \sigma + 2 \epsilon \rho \mathbb{I} + \nu_2 \Delta_x \sigma, \\
\partial_t \rho + u \cdot \nabla_x \rho = \nu_2 \Delta_x \rho, \\
u(0) = u_0, \sigma(0) = \int m \otimes \nabla_m U \mu_0 dm, \rho(0) = M_{0,0} [\mu_0 ].
\end{gathered}
\end{equation}
\end{corollary}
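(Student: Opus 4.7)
The strategy is to derive the three equations of the Oldroyd-B system by taking appropriate low-order moments of $f$ and invoking the moment evolution identity of Lemma \ref{momentofmomentsols}. The velocity equation requires no work: by Theorem \ref{globalexistence}, $(u,f)$ already solves (\ref{system}), so $u$ satisfies the Navier-Stokes equation with forcing $K \nabla_x \cdot \sigma$, and the stress $\sigma = \int m \otimes \nabla_m U \, f \, dm$ defined in the corollary is exactly the Kramer tensor (\ref{Kramer}) appearing on the right-hand side.

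For $\rho = M_{0,0}[f]$, I would apply Lemma \ref{momentofmomentsols} with $a=b=0$. Every term on the right-hand side of (\ref{momentevolution}) carries an explicit factor of $a$, $b$, $a(a-1)$, $b(b-1)$, or $(a+b)$, all of which vanish, leaving
\begin{equation*}
\partial_t \rho + u \cdot \nabla_x \rho - \nu_2 \Delta_x \rho = 0
\end{equation*}
in the weak sense of (\ref{energymoments}). This is exactly the fourth equation of the target system, and standard parabolic regularity (together with the $L^\infty(0,T;L^2)\cap L^2(0,T;W^{1,2})$ bounds from (\ref{L2H1M})) upgrades this to the strong sense.

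For $\sigma$, the key observation is that when $q=1$ one has $|m|^{2(q-1)} \equiv 1$, so the potentially problematic higher-moment term $M_{a,b}[|m|^{2(q-1)}\mu]$ appearing in (\ref{momentevolution}) collapses to $M_{a,b}$ itself, and the second-order moment system closes autonomously. I would apply Lemma \ref{momentofmomentsols} to each of $M_{2,0}, M_{1,1}, M_{0,2}$, multiply by the factor $2$ coming from $\nabla_m U = 2m$, and assemble the three scalar identities into the $2\times 2$ matrix identity for $\sigma_{ij} = 2\int m_i m_j f\,dm$. The $\epsilon$-contributions from $\Delta_m f$ and $\nabla_m \cdot(f \nabla_m U)$ produce exactly a combination of $\sigma$ itself and of $\rho\,\mathbb{I}$ (via $M_{0,0}$), while the velocity-gradient couplings $a\partial_1 u_1 M_{a,b} + a\partial_1 u_2 M_{a-1,b+1}+b\partial_2 u_1 M_{a+1,b-1} + b\partial_2 u_2 M_{a,b}$ reassemble into the tensor $(\nabla_x u)\sigma + \sigma(\nabla_x u)^T$. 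Uniqueness then follows because the bounds (\ref{B2_1})--(\ref{B3}) and (\ref{B5}) place $(u,\sigma,\rho)$ in the strong-solution class in which Constantin--Kliegl \cite{MR2989441} proved unique solvability of the diffusive Oldroyd-B system.

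The main obstacle is purely bookkeeping: one must carefully track the numerical constants (the factor $2$ coming from $\nabla_m U = 2m$, the normalization $\sigma = 2 \int m \otimes m f dm$, and the convention whether $(\nabla_x u)_{ij}$ means $\partial_i u_j$ or $\partial_j u_i$) so that the scalar identities for $M_{2,0}, M_{1,1}, M_{0,2}$ reassemble into precisely the stated matrix equation, and verify that the $L^2(0,T;W^{1,2})$ weak identities furnished by Lemma \ref{momentofmomentsols} upgrade to the regularity class assumed by the uniqueness result in \cite{MR2989441}; both are routine given the regularity estimates (\ref{B2_1})--(\ref{B5}) and the finite-entropy hypothesis (\ref{initdenentropy}).
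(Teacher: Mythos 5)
Your proposal is correct and follows exactly the same route as the paper's own proof: invoke Theorem~\ref{globalexistence} for the velocity equation, apply Lemma~\ref{momentofmomentsols} (whose right-hand side closes at second order because $|m|^{2(q-1)}\equiv 1$ when $q=1$) to obtain the weak evolution equations for $\rho=M_{0,0}$ and for the entries $M_{2,0},M_{1,1},M_{0,2}$ of $\sigma$, upgrade these to strong equations via the regularity bounds, and conclude uniqueness from \cite{MR2989441}. The paper states this in three sentences; you merely spell out the bookkeeping in more detail.
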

\begin{proof}
It is a consequence of Lemma \ref{momentofmomentsols} and Theorem \ref{globalexistence}. Although $\sigma$ is a weak solution of the corresponding equation of (\ref{momentevolution}), it has enough regularity to perform integration by parts, so in fact it is a strong solution. By the uniqueness of diffusive Oldroyd-B system (\cite{MR2989441}), it is the unique solution.
\end{proof}

\subsection{Free energy bound} \label{freeEbound}
In this section, we prove the estimates (\ref{entropyestimw}) and (\ref{entropyestms}). For this purpose, we briefly review the proof of Theorem \ref{entropycon}. We follow the proof in \cite{MR3443169}.
\begin{proof}[proof of Theorem \ref{entropycon} ]
For simplicity, we assume that $a^{ij} (x, t) = a^{ij}$ for some constant, positive definite matrix $(a^{ij})_{ij}$. We use the following simple observation: given two nonnegative functions $f_1, f_2 \in L^1 (\mathbb{R}^d)$, for every measurable function $\psi$ with the property that $|\psi|^2 f_1 \in L^1 (\mathbb{R}^d)$ we have
\begin{equation}
\int_{\mathbb{R}^d} \frac{| (\psi f_1 ) * f_2 |^2 }{f_1 * f_2 } dx \le \int_{\mathbb{R}^d} |\psi|^2 f_1 dx \int_{\mathbb{R}^d} f_2 dx, \label{convolid}
\end{equation}
where $\frac{| (\psi f_1 ) * f_2 (x) |^2 }{f_1 * f_2 (x) } := 0$ if $f_1 * f_2 (x) = 0$. Also we set $$\rho * \omega_\epsilon (x, t) : = \int_{\mathbb{R}^d} \omega_\epsilon (x-y) \rho (y, t) dy,$$ where $\omega_\epsilon (x) = \epsilon ^{-d} g\left (\frac{x}{\epsilon } \right )$ where $g$ is the standard Gaussian and $\epsilon \in (0, T)$. Then $\mu = \rho dxdt$ and in the Sobolev sense
\begin{equation}
\partial_t (\rho * \omega_\epsilon ) = (a^{ij} \rho ) * (\partial_{x_i} \partial_{x_j} \omega_\epsilon ) - (b^i \rho ) * \partial_{x_i} \omega_\epsilon. \label{mollified}
\end{equation}
We have the following version of $\rho * \omega_\epsilon$ defined by the formula
\begin{equation}
\rho * \omega_\epsilon (x, t) := \rho * \omega_\epsilon (x, 0) + \int_0 ^t v(x,s) ds \label{version}
\end{equation}
where $v$ is the right side of (\ref{mollified}). One can readily check that this version is absolutely continuous in $t$ on $[0, T]$ and belongs to the class $C_b ^\infty (\mathbb{R}^d )$ in $x$, and for almost every $t$, including $t=0$, this version coincides for all $x$ with the original version defined by convolution. This version is bounded pointwise by $\epsilon^{-d}$, for all $(x,t) \in \mathbb{R}^d \times [0, T]$. We set
\begin{equation}
\rho_\epsilon := \rho * \omega_\epsilon, \,\, f_\epsilon (x, t) := \rho_\epsilon (x, t) + \epsilon \max (1, |x| )^{-(d+1) },
\end{equation}
where $\rho * \omega_\epsilon$ should be understood as the version (\ref{version}) and by $\max (1, |x| )^{-(d+1) }$ we mean, again by a slight abuse of notation, a smooth, bounded function equals it for $|x| > 2$. Since the function $\rho \Lambda$ is integrable, there is $\tau$ as close to $T$ as we wish such that
\begin{equation}
\int_{\mathbb{R}^d} \rho (x, \tau) \Lambda(x) dx < \infty,
\end{equation}
and for every $\epsilon = \frac{1}{n}$ our version of $\rho_\epsilon (x, \tau)$ coincides with the function $\rho (\cdot, \tau) * \omega_\epsilon (x)$ for all $x$. Then by inequality $$ \log \max (|x+y| , 1) \le \log \max (|x|, 1) + |y|$$ gives the estimate
\begin{equation}
\int_{\mathbb{R}^d} f_\epsilon (x, \tau) \Lambda(x) dx \le \int_{\mathbb{R}^d} \rho (x, \tau) \Lambda (x) dx + \epsilon M_1, \label{Lambdaestim}
\end{equation}
where $M_1$ is a constant independent of $\epsilon$. Then by (\ref{mollified}), we have
\begin{equation}
\begin{gathered}
\int_0 ^\tau \int_{\mathbb{R}^d} \partial_t (\rho * \omega_\epsilon ) \log f_\epsilon dx dt = \int_0 ^\tau \int_{\mathbb{R}^d} \left ( a^{ij} \left ( \rho * \partial_{x_i} \partial_{x_j} \omega_\epsilon \right ) - \left ( b^i \rho \right ) * \partial_{x_i} \omega_\epsilon \right ) \log f_\epsilon dxdt,
\end{gathered} \label{evolconvol1}
\end{equation}
and by $|\log f_\epsilon | \le C \left ( \log \frac{1}{\epsilon} + 1  + \Lambda \right )$, (\ref{convolid}), $|b| \in L^2(\mu)$, and the estimate $$|\log \max (|x+y|, 1) |^2 \le 4 + 2 | \log \max (|x|, 1) |^2 + 2 | \log \max (|y|, 1 ) | ^2 $$ the integrand of the right side of (\ref{evolconvol1}) is integrable in $\mathbb{R}^d \times (0, T)$. Furthermore, one can observe that one can integrate by parts of the right side of (\ref{evolconvol1}) using the similar argument: therefore we get
\begin{equation}
\begin{gathered}
\int_0 ^\tau \int_{\mathbb{R}^d} \partial_t \rho_\epsilon \log f_\epsilon dxdt \\
= - \int_0 ^ \tau \int_{\mathbb{R}^d} \frac{ \partial_{x_i} f_\epsilon }{f_\epsilon} \left ( a^{ij}  \partial_{x_j} \left ( \rho * \omega_\epsilon \right ) -(b^i \rho) * \omega_\epsilon \right ) dx dt .
\end{gathered} \label{evolconvol2}
\end{equation}
The integrand of left side of (\ref{evolconvol2}) can be written as $\partial_t (f_\epsilon \log f_\epsilon ) - \partial_t \rho_\epsilon$, and since $\rho_t$ are probability measures, the left side of (\ref{evolconvol2}) equals
\begin{equation}
L_\epsilon := \int_{\mathbb{R}^d} \left ( f_\epsilon (x, \tau) \log f_\epsilon (x, \tau) - f_\epsilon (x, 0) \log f_\epsilon (x, 0) \right ) dx.
\end{equation}
By (\ref{Lambdaestim}) and $|\log f_\epsilon | \le C\left (\log \left (\frac{1}{\epsilon } \right ) + 1 + \Lambda \right  )$ we have $f_\epsilon (\cdot ,\tau) \log f_\epsilon (\cdot ,\tau ) \in L^1 (\mathbb{R}^d )$ and similarly $f_\epsilon (\cdot ,0) \log f_\epsilon (\cdot, 0) \in L^1 (\mathbb{R}^d )$. By Jensen's inequality applied to $\Phi(s) = s \log s$, we have
\begin{equation}
\begin{gathered}
\int_{\mathbb{R}^d} f_\epsilon (x,0) \log f_\epsilon (x,0) dx \\
 \le \lambda \int \Phi \left ( \frac{\rho_\epsilon}{\lambda}  \right ) dx  + (1 - \lambda) \int \Phi \left ( \frac{\epsilon}{1-\lambda} \max(|x|,1) ^{-(d+1) } \right ) dx \\
\le \int \rho_0 \log \rho_0 dx + \log \frac{1}{\lambda} + \epsilon \int \frac{1}{1 - \lambda} \max(|x|, 1) ^{-(d+1)} dx
\end{gathered}
\end{equation}
for any $\lambda \in (0, 1)$. On the other hand, by Csisz\'{a}r-Kullback-Pinsker inequality (\cite{MR3497125})
\begin{equation}
\begin{gathered}
\int f \log f - f \log g - f + g dx \ge \frac{1}{2} \norm{f - g}_{L^1} ^2, \mathrm{where} \,\, f, g \in L^1, f \ge 0, g > 0, \int f = \int g = 1
\end{gathered}
\end{equation}
with $f = \frac{1}{\norm{f_\epsilon}_{L^1}  } f_\epsilon = \frac{1}{1+\epsilon C} f_\epsilon $ and $g = \frac{1}{\norm{\max(|x|, 1)^{-(d+1)} }_{L^1 } } \max(|x|, 1)^{-(d+1)} = C \max(|x|, 1)^{-(d+1)}$, we have
\begin{equation}
\begin{gathered}
\int f_\epsilon (x, \tau) \log f_\epsilon (x, \tau) dx \ge (1+ C\epsilon) \log C(1 + C\epsilon ) - (d+1) \int f_\epsilon (x, \tau) \Lambda (x) dx \\
\ge -(d+1) \int \rho (x, \tau) \Lambda(x) dx + o(\epsilon).
\end{gathered}
\end{equation}
From (\ref{evolconvol2}) we obtain
\begin{equation}
\begin{gathered}
\int_0 ^\tau \int_{\mathbb{R}^d} a^{ij} \frac{\partial_{x_i} f_\epsilon}{f_\epsilon} \partial_{x_j} f_\epsilon dxdt = \int_0 ^\tau \int_{\mathbb{R}^d} \frac{\partial_{x_i} f_\epsilon}{f_\epsilon} \left ( (b^i \rho ) * \omega_\epsilon + \epsilon a^{ij} \partial_{x_j} \max(|x|, 1) ^{-(d+1) } \right ) dxdt - L_\epsilon
\end{gathered}
\end{equation}
and the right side in this inequality is bounded by 
\begin{equation}
\begin{gathered}
\left ( \int_0 ^\tau \int_{\mathbb{R}^d} \frac{|\nabla f_\epsilon |^2}{f_\epsilon} dxdt  \right ) ^{\frac{1}{2}} \left ( \norm{b}_{L^2 (\mu ) } + o(\epsilon ) \right ) \\
+ (d+1) \int \rho(x, \tau) \Lambda(x) dx + o(\epsilon) + \int \rho_0 \log \rho_0 dx - \log \lambda + \frac{o(\epsilon)}{1-\lambda}.  
\end{gathered}
\end{equation}
Using $A \ge m\mathbb{I}$, taking $\epsilon \rightarrow 0$, using Fatou's lemma, and putting $\lambda \rightarrow 1$ we get
\begin{equation}
m^2 \int_0 ^\tau \int_{\mathbb{R}^d} \frac{|\nabla \rho|^2}{\rho} dxdt \le \left (\norm{b}_{L^2 (\mu )} ^2 + (d+1) \int \rho(x, \tau) \Lambda(x) dx + \int \rho_0 \log \rho_0 dx \right )
\end{equation}
as desired.
\end{proof}
To prove entropy estimate, we start from (\ref{evolconvol2}) and $L_\epsilon$: first we prove that as $\epsilon \rightarrow 0$, $$ \int_0 ^\tau \int_{\mathbb{R}^d} \frac{\partial_{x_i} f_\epsilon}{f_\epsilon} \partial_{x_j}  (\rho * \omega_\epsilon ) dxdt \rightarrow \int_0 ^\tau \int \frac{ \partial_{x_i} \rho \partial_{x_j} \rho }{\rho} dxdt. $$ We begin with observing that $\frac{\partial_{x_i} f_\epsilon}{f_\epsilon} \partial_{x_j}  (\rho * \omega_\epsilon )$ is bounded by 
\begin{equation}
q_\epsilon =  \frac{|\nabla \rho * \omega_\epsilon | ^2 }{\rho_\epsilon} + C \epsilon \frac{1}{f_\epsilon ^{\frac{1}{2} } }  \max (|x|, 1) ^{-(d+2)} \frac { |\nabla \rho * \omega_\epsilon | }{\left (\rho * \omega_\epsilon \right ) ^{\frac{1}{2} } }.
\end{equation}
For almost all $t \in (0, \tau)$, $ \int_{\mathbb{R}^d} \frac{ |\nabla \rho (x, t) |^2}{\rho (x, t) } dx < \infty$. Therefore, for such $t$, by (\ref{convolid}) we see that $$ \int \frac{|\nabla \rho * \omega_\epsilon (t)|^2 } { \rho * \omega_\epsilon (t) } dx \le \int_{\mathbb{R}^d} \frac{ |\nabla \rho (x, t) |^2}{\rho (x, t) } dx, $$ and so for integral over $t$ too. Then the integral over $\mathbb{R}^d \times [0, \tau)$ of the second term is also bounded by $\sqrt{\epsilon} C$, where $C$ is independent of $\epsilon$. Therefore, we have $$ \lim \sup _{\epsilon \rightarrow 0} \int_0 ^\tau \int q_\epsilon (x, t) dx dt  \le \int_0 ^\tau \int_{\mathbb{R}^d} \frac{ |\nabla \rho (x, t) |^2}{\rho (x, t) } dx dt. $$  On the other hand, note that $q_\epsilon (x, t) \rightarrow \frac{|\nabla \rho (x, t) |^2 } {\rho (x, t) }$ for almost all $(x, t) \in \mathbb{R}^d \times [0, \tau) $, at least for a subsequence of $\epsilon = \frac{1}{n}$ because we have $L^1 (x, t)$ convergence of approximate identity in $x$ variable. Therefore, by Fatou's lemma we have $$ \int_0 ^\tau \int_{\mathbb{R}^d} \frac{ |\nabla \rho (x, t) |^2}{\rho (x, t) } dx \le \lim \inf \int_0 ^\tau \int q_\epsilon (x, t) dx dt. $$ Therefore, we see that $\frac{\partial_{x_i} f_\epsilon}{f_\epsilon} \partial_{x_j}  (\rho * \omega_\epsilon )$ is bounded by $q_\epsilon (x, t)$ pointwise, which is integrable and converges to $ \frac{ |\nabla \rho (x, t) |^2}{\rho (x, t) }$ pointwise, and its integral also converges to the integral of the limit. Therefore, by generalized dominated convergence, we prove the claim. In a similar manner, we see that $$ \int_0 ^\tau \int_{\mathbb{R}^d} \frac { | (b^i \rho) * \omega_\epsilon |^2 } {\rho * \omega_\epsilon } dx dt \rightarrow \int_0 ^\tau \int_{\mathbb{R}^d} \frac { | (b^i \rho) |^2} {\rho} dxdt. $$ Then again $\frac{\partial_{x_i} f_\epsilon } {f_\epsilon} (b^i \rho) *\omega_\epsilon$ is bounded by 
\begin{equation}
q_\epsilon ' = \left ( \frac{|\nabla \rho * \omega_\epsilon| }{\sqrt{\rho_\epsilon } } + C \epsilon \frac{1}{f_\epsilon ^{\frac{1}{2} } } \max (|x|, 1) ^{-(d+2)} \right ) \frac{| (b^i \rho) * \omega_\epsilon | }{\sqrt{\rho_\epsilon } },
\end{equation}
and we can again use generalized dominated convergence to conclude that
\begin{equation}
\int_0 ^\tau \int_{\mathbb{R}^d} \frac{\partial_{x_i} f_\epsilon } {f_\epsilon} (b^i \rho) *\omega_\epsilon dxdt \rightarrow \int_0 ^\tau \int_{\mathbb{R}^d} b^i \partial_{x_i} \rho dx dt.
\end{equation}
On the other hand, to control $L_\epsilon$ term we observe that $\Psi (x) = x \log x - x + 1 \ge 0$ for all $x \ge 0$: then for $g = \max (|x|, 1) ^{-(d+1) }$ by Fatou we have
\begin{equation}
\begin{gathered}
\int \rho \log \rho (\tau) dx + (d+1) \int \rho (\tau) \Lambda dx -1 + \int g dx = \int \Psi \left  ( \frac{\rho (\tau) }{g} \right ) g dx \\
 \le \lim \inf_{\epsilon \rightarrow 0} \int \Psi \left ( \frac{f_\epsilon}{g} \right ) g dx = \lim \inf \int f_\epsilon \log f_\epsilon (\tau) dx + (d+1) \int \rho (\tau) \Lambda dx -1 + \int g dx.
\end{gathered}
\end{equation}
Here we used that $\int f_\epsilon (\tau) \Lambda dx \rightarrow \int \rho (\tau) \Lambda dx$, which comes from (\ref{Lambdaestim}) and Fatou. Therefore, by taking $\epsilon \rightarrow 0$ to (\ref{evolconvol2}) we get
\begin{equation}
\int \rho \log \rho (\tau) dx + \int_0 ^\tau \int_{\mathbb{R}^d} \frac{a^{ij} \partial_{x_i} \rho \partial_{x_j} \rho}{\rho} dxdt - \int_0 ^\tau \int_{\mathbb{R}^d} b^i \partial_{x_i} \rho dxdt \le \int \rho_0 \log \rho_0 dx. \label{entropyineq}
\end{equation}
Applying (\ref{entropyineq}) to our equation, and applying integration by parts to $b^i \partial_{x_i} \rho$, which is possible since $b^^i \rho, (\partial_{x_i} b^i ) \rho \in L^1$, we get
\begin{equation}
\begin{gathered}
\int f(\tau) \log f(\tau) dmdx + \int_0 ^\tau \int \nu_2 \frac{|\nabla_x f|^2}{f} + \epsilon \frac{|\nabla_m f|^2}{f} dmdx dt \\
- \epsilon \int_0 ^\tau \int \Delta_m U f dmdxdt \le \int f_0 \log f_0 dmdx.
\end{gathered} \label{Entropypart}
\end{equation}
On the other hand, applying similar argument as (\ref{uL2sigmaL1}), we have
\begin{equation}
\begin{gathered}
\int \bar{M}_{2q} ^\alpha (\tau) dx + \epsilon (2q) ^2 \int_0 ^\tau \int \bar{M}_{4q-2} ^\alpha dx dt \\
 = \int \bar{M}_{2q} ^\alpha (0) dx +  \int _0 ^\tau \int \mathrm{Tr} ( (\nabla_x u ) \sigma ) dx dt + \epsilon (2q) ^2 \int_0 ^\tau \int \bar{M}_{2(q-1) } ^\alpha dx dt + I_\alpha
 \end{gathered} \label{mollifiedTracestress}
\end{equation}
where $I_\alpha \rightarrow 0$ as $\alpha \rightarrow \infty$. Note that we know, by weak convergence, $$ \int \bar{M}_{2q} [f] (\tau) dx + \epsilon (2q)^2 \int_0 ^\tau \int \bar{M}_{4q - 2} [f] dxdt $$ does not exceed the limit inferior of the left side of (\ref{mollifiedTracestress}). On the other hand, we need $$ \int_0 ^\tau \int \bar{M}_{2(q-1)} ^\alpha dxdt \rightarrow  \int_0 ^\tau \int \bar{M}_{2(q-1)} [f] dxdt,  $$ which can be obtained by the following: since $$ \int \Lambda(x) |m|^{2(q-1)} f^\alpha dmdx \le \int \left ( |\Lambda| ^2 + |m|^{4(q-1) } \right ) f^\alpha dmdx $$ we see, from bounds in section \ref{unifBdmom} and section \ref{Flfielddep} we note that $\int_0 ^\tau \int \Lambda(x) \bar{M}_{2(q-1)} ^\alpha dx dt$ is bounded by some constant $C$ depending on initial data $\mu_0$ and $u$, uniform in $\alpha$. Therefore, for any $R > 1$, we have $$ \int_0 ^\tau \int_{|x| > R} \bar{M}_{2(q-1) }^\alpha dxdt \le \frac{C}{\log R}.$$ On the other hand, we note that $$ |\nabla_x \bar{M}_{2(q-1) } ^\alpha | \le \bar{M}_{4(q-1) } ^\alpha + \int \frac{|\nabla_x f^\alpha | ^2 } {f^\alpha} dm, $$ and bounds in section \ref{unifBdmom} and section \ref{Flfielddep} gives that $\norm{\nabla_x \bar{M}_{2(q-1) } ^\alpha }_{L^1 (0, T; L^1) } $ is uniformly bounded in $\alpha$. Also, by (\ref{alphamoments}) we can see that $\partial_t \bar{M}_{2(q-1) } ^\alpha \in L^1 (0, T; W^{-1, 1} )$  is uniformly bounded in $\alpha$: for terms involving velocity fields, one can use $L^2$ bounds on moments, and for plain moment terms one note that the highest moment in that equation has degree $4(q-1)$, and it has bound in $L^\infty (0, T; L^1)$, which is uniform in $\alpha$. Then for any $B(0, R)$, $W^{1,1} (B(0, R) ) \subset L^1 (B (0, R) )$ compactly by Rellich-Kondrachov, and $L^1 (B(0, R) ) \subset W^{-1, 1} (B(0, R) )$ by Morrey-Sobolev embedding $W^{1, q'} \subset L^\infty $ for $q < \frac{d}{d-1} $. Therefore, by Aubin-Lions, by applying some cutoff function if necessary, we have $$ \lim_{\alpha \rightarrow \infty} \int_0 ^\tau \int_{B(0, R) } \bar{M}_{2(q-1) } ^\alpha dx dt = \int_0 ^\tau \int_{B(0, R) } \bar{M}_{2(q-1) } [f] dx dt .$$ To summarize, we have
\begin{equation}
\begin{gathered}
\int \bar{M}_{2q} [f] (\tau) dx + \epsilon (2q) ^2 \int_0 ^\tau \int \bar{M}_{4q-2} [f] dxdt  \\
\le \int \bar{M}_{2q} [f_0] dx +  \int_0 ^\tau \int \mathrm{Tr} ( (\nabla_x u ) \sigma ) dx dt + \epsilon (2q) ^2 \int_0 ^\tau \int \bar{M}_{2(q-1) } [f]  dx dt,
\end{gathered} 
\end{equation}
or, in other words,
\begin{equation}
\begin{gathered}
- \int \log \left ( e^{-U(m) } \right ) f (\tau) dmdx + \epsilon \int_0 ^\tau \int |\nabla_m U |^2 f dmdxdt \\
- \epsilon \int_0 ^\tau \int \Delta_m U f dmdx dt \le - \int \log \left ( e^{-U(m) } \right ) f_0 dmdx +   \int_0 ^\tau \int \mathrm{Tr} ( (\nabla_x u ) \sigma ) dx dt.
\end{gathered}\label{Tracepart}
\end{equation}
Note that we can apply integration by parts to the term $\int_0 ^\tau \int \Delta_m f dmdxdt$: since $| \nabla_m U \nabla_m f | \le \frac{|\nabla_m f|^2}{f} + |\nabla_m U|^2 f$ so it is integrable in $L^1 ([0, T] \times \mathbb{R}^{2+2} )$, and $\nabla_m U f$ is also integrable. Therefore, by adding (\ref{Entropypart}) and (\ref{Tracepart}), and adding the velocity part we get
\begin{equation}
\begin{gathered}
\int f(\tau) \log \frac{f(\tau) }{e^{-U}/Z } dmdx + \epsilon \int_0 ^\tau \int f \left | \nabla_m \log \left ( \frac{f} {e^{-U}/Z  } \right ) \right | ^2 dm dx dt \\
+ \nu_2 \int_0 ^\tau \int f \left | \nabla_x \log \left ( \frac{f} {e^{-U}/Z  } \right ) \right | ^2 dm dx dt + \frac{1}{K} \norm{u(\tau) }_{L^2} ^2 + \frac{\nu_1} {K} \int_0 ^\tau \norm{\nabla_x u }_{L^2} ^2 dxdt \\
\le \norm{u_0}_{L^2} ^2 + \int f_0 \log \frac{f_0 }{e^{-U}/Z } dmdx
\end{gathered} \label{entropyestimw}
\end{equation}
where $Z = \int e^{-U} dm$.
On the other hand, suppose that
\begin{equation}
\int M_{0,0} [f_0] \log \left ( M_{0,0} [f_0] \right ) dx < \infty. \label{initdenentropy}
\end{equation}
Using the same technique as before, we can show that
\begin{equation}
\begin{gathered}
\lim_{\epsilon \rightarrow 0}  \int_0 ^\tau \int \nu_2 \frac{ | \nabla_x M_{0,0} ^\epsilon |^2 } {M_{0,0} ^\epsilon } dx dt = \int_0 ^\tau  \int \nu_2 \frac{ | \nabla_x M_{0,0} |^2 } {M_{0,0} } dx dt,\\
\lim_{\epsilon \rightarrow 0} \int M_{0,0} ^\epsilon (x, 0) \log  M_{0,0} ^\epsilon (x, 0) dx = \int M_{0,0}(x, 0) \log M_{0,0} (x, 0) dx.
\end{gathered}
\end{equation} 
where $M_{0,0} ^\epsilon = M_{0,0} * \omega_\epsilon + \epsilon \max (|x|, 1) ^{-3}$ and the remaining task is to show $$\lim\inf_{\epsilon \rightarrow 0} \int M_{0,0} ^\epsilon (x, \tau) \log  M_{0,0} ^\epsilon (x, \tau) dx = \int M_{0,0}(x, \tau) \log M_{0,0} (x, \tau) dx.$$ For this we recall the following fact about Fatou from \cite{MR1817225}, which comes from Br\'{e}zis-Lieb inequality (\cite{MR699419}) : if $\{h_n \}$ is a sequence of nonnegative functions, converging almost everywhere to $h$, and $\int h_n$ is uniformly bounded, then
\begin{equation}
\lim \inf_n \int |h_n - h| + \int h = \lim \inf _n \int h_n.
\end{equation} 
We apply this to $\Psi \left (\frac{M_{0,0} ^\epsilon}{g} \right ) g \ge 0$, where as before $\Psi(s) = s \log s - s + 1 $ and $g (x) = \max (|x|, 1) ^{-3} $. We know that for $f \ge 0, f \in L^1 \cap L^2, \int f \Lambda < \infty$, we have a pointwise estimate
\begin{equation}
f | \log f | \le C f \Lambda + C g + |f|^2, \label{pointwiseflogf}
\end{equation}
where the first term corresponds to the case $g(x) ^2 \le f(x) \le 1$, the second term corresponds to the case $0 \le f \le g(x) ^2$, and the last term corresponds to the case $f(x) > 1$. Therefore, 
\begin{equation}
\int_{\mathbb{R}^2} \Psi \left (\frac{M_{0,0} ^\epsilon}{g} \right ) g dx = \int M_{0,0} ^\epsilon \log M_{0,0} ^\epsilon + 3 M_{0,0}^\epsilon \Lambda - M_{0,0}^\epsilon + g dx
\end{equation}
so by (\ref{pointwiseflogf}) and (\ref{Lambdaestim}) they are uniformly bounded in $\epsilon$. Thus it suffices to show $$ \int \left | \Psi \left (\frac{M_{0,0} ^\epsilon}{g} \right ) - \Psi \left (\frac{M_{0,0}}{g} \right ) \right | g dx \rightarrow 0. $$ But this term is bounded by
\begin{equation}
\int | M_{0,0} ^\epsilon \log M_{0,0} ^\epsilon - M_{0,0} \log M_{0,0} | + |M_{0,0} ^\epsilon - M_{0,0} | ( \Lambda + 1) dx, 
\end{equation}
which converges to 0 by the pointwise estimate (\ref{pointwiseflogf}) and generalized dominated convergence theorem. Therefore, we have
\begin{equation}
\int M_{0,0} (\tau) \log M_{0,0} (\tau) dx + \nu_2 \int_0 ^\tau \int \frac{|\nabla_x M_{0,0} |^2} {M_{0,0}} dxdt = \int M_{0,0} [f_0] \log M_{0,0} [f_0] dx. \label{relentropypart}
\end{equation}
Noting that $$\frac{|\nabla_x M_{0,0} |^2} {M_{0,0}} = M_{0,0} \left | \nabla_x \left ( \log M_{0,0} \right ) \right |^2 = \int f \left | \nabla_x \left ( \log M_{0,0} \right ) \right |^2 dm$$ and by subtracting (\ref{relentropypart}) to (\ref{entropyestimw}) we get
\begin{equation}
\begin{gathered}
\int f(\tau) \log \frac{f(\tau) }{ M_{0,0} [f(\tau) ] e^{-U}/Z } dmdx + \epsilon \int_0 ^\tau \int f \left | \nabla_m \log \left ( \frac{f} { M_{0,0} [f] e^{-U}/Z  } \right ) \right | ^2 dm dx dt \\
+ \nu_2 \int_0 ^\tau \int f \left | \nabla_x \log \left ( \frac{f} {M_{0,0} [f] e^{-U}/Z  } \right ) \right | ^2 dm dx dt + \frac{1}{K} \norm{u(\tau) }_{L^2} ^2 + \frac{\nu_1} {K} \int_0 ^\tau \norm{\nabla_x u }_{L^2} ^2 dxdt \\
\le \norm{u_0}_{L^2} ^2 + \int f_0 \log \frac{f_0 }{M_{0,0} [f_0] e^{-U}/Z } dmdx.
\end{gathered} \label{entropyestms}
\end{equation}
Therefore, we have proved the following.
\begin{theorem}\label{Freeenergybound}
If the system (\ref{system}) has initial data satisfying $u_0 \in \mathbb{P}W^{2,2}$, (\ref{initpositivity}), (\ref{initmoment}), (\ref{initstress}), and (\ref{initentropy}), then for almost all $\tau \in (0, +\infty)$ (\ref{entropyestimw}) holds. If in addition (\ref{initdenentropy}) holds, then (\ref{entropyestms}) also holds for almost all $\tau \in (0, +\infty)$.
\end{theorem}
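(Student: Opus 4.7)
The plan is to establish the free energy estimate \eqref{entropyestms} by combining three distinct dissipation identities: an entropy dissipation for the full density $f(x,m,t)$ viewed as a probability density on $\mathbb{R}^4$, an $L^1$--type balance for the stretching moment $\int \bar M_{2q}[f]\,dx$ (which, up to rearrangement, yields the $\int U f$ part of the free energy), and a kinetic energy identity for $u$. The strategy mirrors the proof of Theorem \ref{entropycon}: work at the approximate level where $f^\alpha$ is a classical positive solution of \eqref{truncatedFP}, mollify once more in $(x,m)$ with an additive regularizer $\epsilon\max(|x|,1)^{-(d+1)}$ so logarithms stay integrable, multiply the mollified equation by $\log f_\epsilon$, integrate by parts, and pass to the limit $\epsilon\to 0$ then $\alpha\to\infty$.

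First I would verify the hypotheses of Theorem \ref{entropycon} for the Fokker--Planck equation \eqref{FPwithu} in the combined variable $z=(x,m)\in\mathbb{R}^4$, with drift $b=(u,\,\nabla_x u\, m - \nabla_m U)$ and diffusion matrix $A=\mathrm{diag}(\nu_2,\nu_2,\epsilon,\epsilon)$. Square-integrability $|b|\in L^2(f\,dz\,dt)$ follows from \eqref{uL2sigmaL1}, \eqref{L2H1M2} and \eqref{LinfL1M} (the quadratic term $|\nabla_m U|^2\sim|m|^{4q-2}$ is controlled by $\bar M_{4q-2}[f]\in L^\infty(0,T;L^1)$); the $\Lambda(z)$ control is achieved as in Section \ref{Flfielddep}, using the propagation of $\int|\Lambda(x)|^2 M_{0,0}[f]\,dx$ through the drift-diffusion equation for $M_{0,0}$ and the bound on $\bar M_2[f]$. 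Combined with \eqref{initentropy}, this produces \eqref{Entropypart}: the bound on $\int f\log f$ plus the Fisher information terms $\nu_2|\nabla_x f|^2/f+\epsilon|\nabla_m f|^2/f$, minus the $\epsilon\int\Delta_m U\, f$ correction.

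Next I would handle the $\int U f\,dm\,dx = \int \bar M_{2q}[f]\,dx$ piece. At the $\alpha$-level I multiply \eqref{alpharadmoments} with $k=q$ by $1$ and integrate, obtaining \eqref{mollifiedTracestress}. Two limits must be justified: the weak-$*$ limit passes for $\bar M_{2q}^\alpha$ and $\bar M_{4q-2}^\alpha$ via \eqref{LinfL1M} and lower semicontinuity (on the ``good'' side of the inequality), while the lower-order term $\int\bar M_{2(q-1)}^\alpha$ is strongly convergent by Aubin--Lions applied to $(L^1,W^{1,1},W^{-1,1})$ after a spatial cutoff (the cutoff error goes to zero uniformly in $\alpha$ thanks to the $\Lambda$-moment bound on $\bar M_{2(q-1)}^\alpha$). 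The coupling cross-term $\int\mathrm{Tr}((\nabla_x u)\sigma)$ matches exactly the one in the kinetic energy balance for Navier--Stokes, so summing with $\tfrac12\frac{d}{dt}\|u\|_{L^2}^2+\nu_1\|\nabla_x u\|_{L^2}^2=-\int\mathrm{Tr}((\nabla_x u)\sigma)$ (with appropriate factor $1/K$) cancels it and yields the ``absolute'' estimate \eqref{entropyestimw}.

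For the sharper relative-entropy inequality \eqref{entropyestms}, given \eqref{initdenentropy}, I apply the same entropy-production argument to the pure advection-diffusion equation satisfied by $\rho=M_{0,0}[f]$; the bound $\int M_{0,0}\log M_{0,0}\,dx$ plus $\nu_2\int|\nabla_x M_{0,0}|^2/M_{0,0}$ is bounded by its initial value, producing \eqref{relentropypart}. Subtracting this from \eqref{entropyestimw}, and using $|\nabla_x M_{0,0}|^2/M_{0,0}=\int f|\nabla_x\log M_{0,0}|^2\,dm$, reconstructs the relative logarithm $\log(f/(M_{0,0}e^{-U}/Z))$ in both the entropy and Fisher terms. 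The principal obstacle throughout is the passage to the limit in the entropy functional: $s\mapsto s\log s$ is neither convex-bounded nor continuous under weak convergence of measures, so I will rely on Br\'ezis--Lieb / Fatou in the form $\liminf\int|h_n-h|+\int h=\liminf\int h_n$ applied to $\Psi(M_{0,0}^\epsilon/g)g$ where $\Psi(s)=s\log s-s+1\geq 0$ and $g(x)=\max(|x|,1)^{-3}$, combined with the pointwise three-regime bound \eqref{pointwiseflogf} to secure generalized dominated convergence for the difference $|M_{0,0}^\epsilon\log M_{0,0}^\epsilon-M_{0,0}\log M_{0,0}|$; a parallel argument handles $\int f_\epsilon\log f_\epsilon$ in $(x,m)$.
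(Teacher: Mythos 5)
Your proposal matches the paper's proof essentially step for step: the entropy-production argument for $f$ on $\mathbb{R}^4$ following the (sharpened) proof of Theorem \ref{entropycon}, the $\bar M_{2q}$ balance from \eqref{alpharadmoments} passed to the limit via lower semicontinuity plus Aubin--Lions for the lower-order moment, cancellation of $\int\mathrm{Tr}((\nabla_x u)\sigma)$ against the kinetic energy identity, and the subtraction of the $M_{0,0}$-entropy dissipation using Br\'ezis--Lieb and the pointwise bound \eqref{pointwiseflogf}. One small remark: you write that the $M_{0,0}$-entropy quantity ``is bounded by its initial value,'' but for the subtraction to go in the right direction you need \eqref{relentropypart} to hold as an \emph{equality} (or with the reverse inequality); this is precisely what the Br\'ezis--Lieb/generalized dominated convergence step secures, and the simpler one-sided Fatou+Jensen argument suffices for \eqref{Entropypart} itself, so no separate Br\'ezis--Lieb argument in the $(x,m)$ variables is actually needed there.
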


\section{Conclusion}
We proved global regularity of the 2D incompressible Navier-Stokes equation coupled with diffusive Fokker-Planck equation, for a large class of data. We defined the class of data by the size of macroscopic variables, and this newly proposed class has some advantages over previously used ones. In addition, we proved that the free energy of the system does not increase over time. To prove the result, we defined the moment solution, which is a weak solution with controllable moments. 
\paragraph{Acknowledgements.} The author is supported by Samsung scholarship. The author wants to express his deep gratitude to Prof. Peter Constantin for his kind support, encouragement and insightful discussions with him. The author also appreciates his valuable suggestions, which critically helped the author to apply the notion of moment solutions to models other than Hookean models. The author is also very grateful to Prof. Charles Fefferman for his kind support and encouragement. The author thanks Prof. Vlad Vicol for his support and a lot of helpful advice. The author is grateful to Prof. Elliot Lieb for helpful discussions. Discussions with Huy Nguyen, Federico Pasqualotto, and Theo Drivas were really helpful. In-Jee Jeong and Sung-Jin Oh also gave a lot of useful advice.

\bibliographystyle{abbrv}
\bibliography{jl1}
\end{document}